\newtheorem{definition}{Definition}
\newtheorem{proposition}{Proposition}
\newtheorem{lemma}{Lemma}
\newtheorem{corollary}{Corollary}
\newtheorem{theorem}{Theorem}
\newtheorem*{theorem1bis}{Theorem 1bis}
\newcommand{\Z}{\mathbb{Z}}
\renewcommand{\mu}{\upmu}
\def\bd{\partial}
\def\rest{\hskip 1pt{\hbox to 10.8pt{\hfill
\vrule height 7pt width 0.4pt depth 0pt\hbox{\vrule height 0.4pt
width 7.6pt depth 0pt}\hfill}}}
\newcommand{\eps}{\varepsilon}
\def\beq{\begin{equation}}
\def\eeq{\end{equation}}
\def\R{{\mathbb R}}
\def\N{{\mathbb N}}
\def\bd{\partial}
\def\QED{\hbox{${\vcenter{\vbox{
   \hrule height 0.4pt\hbox{\vrule width 0.4pt height 6pt
   \kern5pt\vrule width 0.4pt}\hrule height 0.4pt}}}$}\vspace{7pt}}
\begin{document}

\author{Fabrice BETHUEL\thanks{UPMC-Paris6, UMR 7598 LJLL, Paris, F-75005 France, 
}, Didier SMETS\thanks{UPMC-Paris6, UMR 7598 LJLL, Paris, F-75005 France} and Giandomenico ORLANDI\thanks{Universit\`a di Verona, Dip. Informatica, 15 strada le Grazie, I-37134 Verona}}
\title{Slow motion  for gradient systems with equal depth multiple-well potentials}
\date{}
\maketitle

\begin{abstract}
 For scalar reaction-diffusion in one space dimension,  it is known for a long time that fronts move with an exponentially small speed   for   potentials with several distinct minimizers.  The purpose of this paper is to provide a similar   result  in  the case  of systems. Our method  relies on a careful study of the evolution of localized energy. This approach has the advantage to relax the  preparedness assumptions on the initial datum.
\end{abstract}

\bigskip
\noindent
\section{Introduction}

\subsection{Potentials with wells of equal depth}

 The purpose of this paper is to investigate  the behavior of  solutions $v$ of the reaction-diffusion equation of gradient type
 \begin{equation}
 \label{glpara}
  v_t- v_{xx}=-\nabla V(v).
  \end{equation}
The function  $v$  denotes here  a function of the space variable $x \in \R$ and the time variable $t\geq 0$ and takes values in some euclidean space $\R^k$, so that \eqref{glpara} is a system of $k$ scalar partial differential equations. 
   Equation \eqref{glpara}   actually corresponds to the $L^2$ gradient-flow    of the  energy functional  $\mathcal{E}$ which is defined for a function $ u:\R \mapsto \R^k$ by the formula
    \begin{equation}
 \label{glfunctional}
 \mathcal{E}(u)= \int_\R e(u)= \int_{\R} \frac{\vert \dot u \vert ^2}{2}+V(u).
 \end{equation}
   The function $V$, usually termed the potential, is assumed to be  a smooth  function from $\R^k$ to  $\R$, tending to infinity at infinity, so that it is bounded from below.    
   
 Simple solutions to equation \eqref{glpara}  are provided by the stationary ones,  that is time-independent solutions of the form $v(x, t)=u(x)$, where the profile $u:\R\mapsto \R^k$ is a solution of the ordinary differential equation
       \begin{equation}
       \label{ordi}
      - u_{xx}=-\nabla V(u).
      \end{equation}
Among those solutions, the simplest ones are the constant functions $v(x,t)=\sigma$, where $\sigma$ is a critical point of  the potential $V$,  for instance  a minimizer. Another interesting type of solutions to \eqref{ordi} is provided by those tending, as $x \to \pm \infty$, to critical points of the potential $V$: in this case conservation of energy for \eqref{ordi} implies that  $V(u(+\infty))=V(u(-\infty))$. The central assumption on the potential $V$ in this  paper is that it possesses a finite number of, and at least two, distinct minimizers.  A canonical example  in the  scalar case $k=1$ is given by the function
 \begin{equation}
 \label{glexemple}
  V(u)=\frac{(1-u^2)^2}{4}, 
  \end{equation}  whose minimizers are $+1$ and $-1$.
  
  This paper is devoted to the analysis  of   the evolution in time of  initial data which connect two distinct minimizers of the potential.  Such maps $u$, from  $\R$ to $\R^k$, whose limits at $\pm  \infty$ are distinct minimizers of $V$  are usually termed {\bf  fronts}. If they are moreover solution to the ordinary differential equation \eqref{ordi}, we will call them stationary fronts, so that a stationary front is a heretoclinic\footnote{Actually homoclinic solutions, whenever they exist, could be considered as well.} solution to \eqref{ordi}.
 For instance, in example \eqref{glexemple}, stationary fronts are necessarily of the form
 \begin{equation}
 \label{kink}
  u(x) = w_{\pm}(x-c)=\pm \tanh \left( \frac{x-c}{\sqrt{2} }\right),  
\end{equation}
for some $c\in \R.$ In that case, $w_{+}$ (resp. $w_{-}$), which is often referred to as the kink (resp.  anti-kink) solution,  connects $-1$ to $+1$  (resp. $+1$ to $-1$).
  The dynamics of fronts and their  eventual convergence to stationary fronts, which are attractors of the dynamics,   is actually  a central topic in the study  of reaction-diffusion equations of  gradient-type: in our short historical section below, we will review some of the works related to this question. \par
 
 To be more  specific,  our assumptions on  the potential $V$ can be formulated as follows. We assume that $V$ is smooth and satisfies the conditions:
  \smallskip 
   $$ \inf V=0 \hbox{ and the set of minimizers}  \  \Sigma\equiv \{ y \in \R^k, V(y)=0 \}\  \leqno{(\text{H}_1)}$$
 is a finite set, with at least two distinct elements, that is
\begin{equation}
\label{sigma}
 \Sigma=\{\upsigma_1, ..., \upsigma_q\},\ q\geq 2, \ \upsigma_i \in \R^k, \ \forall i=1,...,q.
 \end{equation}
  \smallskip
       
 \noindent
${(\text{H}_2)}$  The matrix $\nabla^2V(\upsigma_i)$ is positive definite at each point $\upsigma_i$ of $\Sigma$, in other words, if $\lambda_i^-$ denotes its smallest eigenvalue, then  $\lambda_i^->0$. We denote by $\lambda_i^+$ its largest eigenvalue. 

\smallskip
\noindent
  ${(\text{H}_3)}$ There exists  constants  $\alpha_0>0$ and  $R_0>0$  such that  
   $$y\cdot\nabla V( y )\geq \alpha_0 \vert y \vert ^2, \ \hbox {if }  \vert y \vert >R_0.$$

\smallskip

  A potential $V$ which fulfills conditions ${(\text{H}_1)}$, ${(\text{H}_2)}$ and ${(\text{H}_3)}$  will be termed  throughout a non-degenerate multiple-well potential with equal depths.  A  canonical example is given by \eqref{glexemple},  for which $\Sigma=\{+1,-1\}.$

\medskip

 The main assumption  in this paper  on the initial datum $v^0(\cdot)=v(\cdot, 0)$ is that its energy is finite. More precisely, given an arbitrary constant 
 $M_0>0$, we assume that
 $$
\mathcal E(v^0)\leq M_0<+\infty.  \leqno\text{$(\text{H}_0)$}
 $$
      In particular, in view of the classical energy identity
\begin{equation}
\label{energyidentity}
\mathcal E(v(\cdot,T_2))+\int_{T_1}^{T_2} \int_{\R} \left| {\bd
v \over \bd t}
 \right|^2(x,t)dx\,dt = \mathcal E(v(\cdot,T_1)) \, \quad \forall\, 0\le
 T_1\le T_2\, , 
\end{equation}
 we have, $\forall t>0$,
 \begin{equation}
 \label{touttemp}
 \mathcal E\left(v(\cdot, t)\right )\leq  M_0.
 \end{equation}
 
 This implies in particular that for every  given $t\geq 0$, we have  $V(v(x, t))\rightarrow 0$ as $\vert x\vert \rightarrow \infty$. It is then quite straightforward to deduce from assumption ${(\text{H}_0)}$, ${(\text{H}_1)}$, ${(\text{H}_2)}$ as well as the energy identity  \eqref{energyidentity}, that $v(x, t)\rightarrow \upsigma_{\pm}$ as $x\rightarrow \pm \infty$ , where $\upsigma_{\pm} \in \Sigma$ does not depend on $t$.  In other words our assumptions imply that the map $v(\cdot, t)$ is a front  for all times $t>0$ if $\upsigma_+\neq\upsigma_-$.
 
 \medskip

    \noindent
    \subsection{ Front sets}
     One of our aims is to localize the evolution in time of the region where the function $v(\cdot, t)$ jumps from one minimizer of $V$ to a second one. This will allow us to follow the evolution of the front.  To that purpose,
 we fix $\upmu_0>0$ sufficiently small so that, for $i=1, \ldots, q$, we have
 $$B(\upsigma_i, \upmu_0)\cap B(\upsigma_j, \upmu_0) = \emptyset $$
 for all $i\neq j$ in $\{1,\cdots,q\}$
 and
 \beq\label{eq:conv}
 \frac{1}{2}\lambda_i^-{\rm Id} \leq  
 \nabla^2 V (y) \leq 2\lambda_i^+{\rm Id} 
     \eeq
for all $i\in \{1,\cdots,q\}$ and $y \in B(\upsigma_i,\upmu_0).$
       We then define, for a map $u:\R\mapsto \R^k$,  the set 
    \begin{equation}
    \label{frontset}
     \mathcal D (u)\equiv\{x\in \R,\ \hbox{dist}(u(x),\Sigma) \geq \upmu_0 \}.
     \end{equation}
     In the context of equation \eqref{glpara} we set moreover
     \begin{equation}
     \mathcal D(t)=\mathcal D(v(\cdot, t)).
     \end{equation}
     The evolution of the set  $\mathcal D(t)$ is the main focus of our paper.

\medskip

   For a given map $u$, $\mathcal D(u)$ is related to the set where the energy of $u$ concentrates, in view of the following: 
  \begin{lemma} 
  \label{clearingout} There exists a constant $\eta_0>0$,  depending only $\upmu_0>0$ and $V$, such that, if $I$  is an  interval of $\R$ of length $\vert I \vert \geq 1$,  and $u$ is a $\R^k$-valued function on $I$ satisfying
  \begin{equation}
  \label{riri}
  \int_I  e(u)\leq \eta_0,
  \end{equation}
    then
   \begin{equation}
   \label{gavroche}
     {\rm dist} (u(x), \Sigma) < \upmu_0  \qquad \text{for all } x\in I,
   \end{equation}
   or equivalently 
\begin{equation}
   \label{gavroche2}
     \mathcal D(u) \cap I = \emptyset.
   \end{equation}

  \end{lemma}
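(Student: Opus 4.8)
The plan is to argue by contradiction. Suppose that \eqref{gavroche} fails, i.e.\ that there is a point $x_0\in I$ with ${\rm dist}(u(x_0),\Sigma)\geq \upmu_0$, and show that \eqref{riri} cannot then hold once $\eta_0$ is chosen small enough in terms of $\upmu_0$ and $V$ alone. Since $\int_I e(u)<\infty$ forces $\dot u\in L^2_{\rm loc}(I)$, the map $u$ is absolutely continuous on $I$, so pointwise values of $u$ and the elementary inequality $|u(x)-u(y)|\geq \bigl|{\rm dist}(u(x),\Sigma)-{\rm dist}(u(y),\Sigma)\bigr|$ are legitimate.

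The only place where the potential enters is through the positivity of
\[
m_0:=\inf\bigl\{\,V(y)\ :\ {\rm dist}(y,\Sigma)\geq \upmu_0/2\,\bigr\}.
\]
By $(\text{H}_1)$ the zero set of $V$ is exactly $\Sigma$, and since $V$ is continuous and tends to $+\infty$ at infinity (a consequence of $(\text{H}_3)$), the infimum above is attained on a bounded set and is therefore strictly positive. I will take $\eta_0:=\tfrac12\min\{\,m_0/2,\ \upmu_0^{\,2}/4\,\}$, which depends only on $\upmu_0$ and $V$.

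Because $|I|\geq 1$ and $x_0\in I$, at least one of the two intervals $[x_0,x_0+\tfrac12]$ and $[x_0-\tfrac12,x_0]$ is contained in $I$; call it $J_0$, so that $|J_0|=\tfrac12$. Two cases arise. If ${\rm dist}(u(x),\Sigma)>\upmu_0/2$ for every $x\in J_0$, then $V(u(x))\geq m_0$ on $J_0$, whence $\int_I e(u)\geq \int_{J_0}V(u)\geq m_0/2$. Otherwise there exists $x_1\in J_0$ with ${\rm dist}(u(x_1),\Sigma)\leq \upmu_0/2$, so that $|u(x_0)-u(x_1)|\geq \upmu_0/2$ while $|x_0-x_1|\leq \tfrac12$; applying Cauchy--Schwarz on the subinterval with endpoints $x_0$ and $x_1$ gives $\int_I e(u)\geq \tfrac{|u(x_0)-u(x_1)|^2}{2|x_0-x_1|}\geq \upmu_0^{\,2}/4$. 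In both cases $\int_I e(u)> \eta_0$, contradicting \eqref{riri}. This establishes \eqref{gavroche}, and \eqref{gavroche2} is merely a restatement of it through the definition \eqref{frontset} of $\mathcal D(u)$.

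The argument is entirely elementary, so there is no serious obstacle: the only substantive ingredient is the positivity of $m_0$, that is, excluding a sequence $y_n$ with ${\rm dist}(y_n,\Sigma)\geq \upmu_0/2$ along which $V(y_n)\to 0$, and this is exactly what coercivity ($(\text{H}_3)$, together with $V\to+\infty$ at infinity) rules out. The only point requiring a modicum of care is that the auxiliary interval $J_0$ fits inside $I$, which is precisely the reason for the hypothesis $|I|\geq 1$.
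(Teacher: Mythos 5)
Your proof is correct and rests on exactly the same two ingredients as the paper's: the strict positivity of $\inf\{V(y):{\rm dist}(y,\Sigma)\geq \upmu_0/2\}$ (via continuity of $V$, $V^{-1}(0)=\Sigma$ and coercivity) and the Cauchy--Schwarz bound of the oscillation of $u$ by the kinetic part of the energy. The only difference is cosmetic: you argue by contradiction with a dichotomy around a bad point $x_0$, while the paper argues directly by first producing, via the mean value theorem, a point $s_0$ with $V(u(s_0))\leq\eta_0$ (hence $u(s_0)\in B(\upsigma_i,\upmu_0/2)$) and then propagating closeness over the unit interval.
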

  This kind of result is usually called a {\it clearing-out Lemma} in the literature. It shows that if the energy is sufficiently small in some place, then there are no front located there, or equivalently that where fronts are present, energy needs to concentrate\footnote{The converse is of course not true in general for arbitrary maps, think of small oscillations.}. Therefore, fronts  are among energy concentration intervals, and energy is a good object to track fronts. 
  We will explicitly assume in the sequel $M_0\ge \eta_0$.

An immediate consequence of Lemma \ref{clearingout} yields:

\begin{corollary} 
 \label{interface} 
  Assume  that the map $u$  satisfies $\mathcal E(u)\leq M_0$. There exists $\ell$ points $x_1,...,x_\ell $ in $\mathcal D(u)$ such that 
 \begin{equation}
 \mathcal D(u) \subset \cup_{i=1}^{\ell} [x_i- 1,x_i +1],
 \end{equation}
with a bound $\ell \leq \frac{M_0}{\eta_0}$ on the number of points. 
\end{corollary}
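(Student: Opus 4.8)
The goal here is to prove Corollary~\ref{interface}, which is a straightforward covering argument built on top of the clearing-out Lemma~\ref{clearingout}.

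\medskip

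\noindent\textbf{Proof proposal.} The plan is to extract the points $x_1,\dots,x_\ell$ greedily from $\mathcal D(u)$ while controlling their number via the energy bound. First I would observe that $\mathcal D(u)$ is closed (since $u$ is continuous, being an energy-finite solution profile, and the condition $\mathrm{dist}(u(x),\Sigma)\ge\upmu_0$ is closed) and bounded: indeed, by $(\text{H}_0)$ and the remark following \eqref{touttemp}, $V(u(x))\to 0$ as $|x|\to\infty$, so $\mathrm{dist}(u(x),\Sigma)<\upmu_0$ for $|x|$ large, hence $\mathcal D(u)$ lies in a compact interval. (If one prefers to avoid the boundedness claim, the greedy selection below still terminates purely because of the energy bound.)

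\medskip

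The core step is the selection. Set $x_1=\inf \mathcal D(u)$; then inductively, having chosen $x_1<\dots<x_j$, if $\mathcal D(u)\subset\bigcup_{i=1}^j[x_i-1,x_i+1]$ we stop with $\ell=j$; otherwise let $x_{j+1}=\inf\bigl(\mathcal D(u)\setminus\bigcup_{i=1}^j[x_i-1,x_i+1]\bigr)$, which satisfies $x_{j+1}>x_j+1$ by construction. The key point is that the intervals $J_i=[x_i-\tfrac12,x_i+\tfrac12]$ are pairwise disjoint: since consecutive selected points are more than $1$ apart, $|x_{i+1}-x_i|>1$ forces $J_i\cap J_{i+1}=\emptyset$, and a fortiori all $J_i$ are disjoint. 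Now for each $i$, the point $x_i$ belongs to $\mathcal D(u)$, so $\mathrm{dist}(u(x_i),\Sigma)\ge\upmu_0$; applying the contrapositive of Lemma~\ref{clearingout} to the interval $J_i$ (which has length exactly $1$), we cannot have $\int_{J_i}e(u)\le\eta_0$, hence $\int_{J_i}e(u)>\eta_0$. Summing over the disjoint $J_i$ and using $\mathcal E(u)\le M_0$ gives $\ell\,\eta_0<\sum_{i=1}^\ell\int_{J_i}e(u)\le\mathcal E(u)\le M_0$, so the process must terminate with $\ell\le M_0/\eta_0$ (in fact strictly less, which a fortiori gives $\ell\le M_0/\eta_0$). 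When it terminates, by the stopping rule we have exactly $\mathcal D(u)\subset\bigcup_{i=1}^\ell[x_i-1,x_i+1]$, as required.

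\medskip

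The only mild subtlety — and the step I would be most careful about — is ensuring the greedy procedure is well-posed and actually terminates: one must check that each newly selected infimum is again attained in (or is a limit point of) $\mathcal D(u)$ so that $x_{j+1}\in\mathcal D(u)$ and Lemma~\ref{clearingout} applies at $x_{j+1}$, and that the remaining uncovered part of $\mathcal D(u)$ is nonempty precisely when we have not yet stopped. Closedness of $\mathcal D(u)$ handles the former (the infimum of a nonempty closed set bounded below is attained, after removing finitely many closed intervals the set stays closed), and the uniform lower bound $\int_{J_i}e(u)>\eta_0$ on disjoint intervals forces termination after at most $\lfloor M_0/\eta_0\rfloor$ steps, which is the advertised bound. \QED
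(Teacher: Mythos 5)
Your argument is correct in substance and rests on the same two ingredients as the paper's proof --- the clearing-out Lemma \ref{clearingout} plus the energy bound --- but it implements them differently: the paper simply partitions $\R$ into the fixed unit intervals $I_n=[n,n+1]$, $n\in\Z$, observes that every $I_n$ meeting $\mathcal D(u)$ must carry energy at least $\eta_0$, counts them (at most $M_0/\eta_0$), and picks one point of $\mathcal D(u)$ in each, whereas you run a greedy selection of $1$-separated points of $\mathcal D(u)$. The fixed-grid route buys simplicity: it requires neither closedness nor boundedness of $\mathcal D(u)$, and no discussion of attained infima or termination; your route yields $1$-separated centers (extra geometric information, though not needed here) at the price of exactly those technical points, and three of your justifications there need small repairs. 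First, the separation you actually get is $x_{j+1}\ge x_j+1$, not strict, since the uncovered part of $\mathcal D(u)$ may accumulate at $x_j+1$; this is harmless because the intervals $[x_i-\tfrac12,x_i+\tfrac12]$ still have pairwise disjoint interiors, which is all the energy count uses. Second, a closed set minus finitely many closed intervals is in general \emph{not} closed (e.g. $[0,2]\setminus[0,1]=(1,2]$), so $\inf S_j$ need not be attained in $S_j$; what saves you is that it is a limit of points of $\mathcal D(u)$ and hence belongs to $\mathcal D(u)$ by closedness of $\mathcal D(u)$ alone, which is all you need in order to apply Lemma \ref{clearingout} on the unit interval centered at $x_{j+1}$. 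Third, the corollary concerns an arbitrary map with $\mathcal E(u)\le M_0$, not a time slice of the flow, so boundedness (at least from below) of $\mathcal D(u)$ --- needed for your first infimum to be finite --- should be deduced from the energy bound itself rather than from $(\text{H}_0)$ and the parabolic discussion: since $\int_\R e(u)<\infty$, all but finitely many intervals $[n,n+1]$ have energy at most $\eta_0$ and are therefore disjoint from $\mathcal D(u)$ by the clearing-out lemma. With these adjustments your proof is complete, though the paper's version reaches the same conclusion with none of this bookkeeping.
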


\medskip
\noindent
\subsection{Slow motion of concentration sets}   
The first and main result of this paper is as follows. Assume $M_0\ge\eta_0$, and set
$$
\alpha_0 = 32\frac{M_0}{\eta_0}.
$$
\begin{theorem}
\label{maintheo}
Assume that the potential $V$  satisfies assumptions  
 ${(\text{H}_1)}$, ${(\text{H}_2)}$ and ${(\text{H}_3)}$, that the initial datum $v_0$ satisfies the energy bound ${(\text{H}_0)}.$
There exists a constant $K_0>0$ depending only on the potential $V$ and on $M_0$ such that if $R\geq \alpha_0$, 
\begin{equation}\label{eq:main}
\mathcal D (t) \subset \mathcal D(0) + [-R,R]
\end{equation}
provided 
$$ 0\leq t \leq  \left(\frac{R}{K_0}\right)^2 \exp\left(\frac{R}{K_0}\right).$$ 
\end{theorem}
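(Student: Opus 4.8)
The plan is to control the motion of $\mathcal D(t)$ via a localized energy quantity and a differential inequality showing that energy cannot escape from a neighborhood of $\mathcal D(0)$ faster than exponentially. For a point $x_0\in\R$ and a radius $r>0$, I would introduce a smooth cutoff $\chi_{x_0,r}$ equal to $1$ on $[x_0-r,x_0+r]$, supported in $[x_0-2r,x_0+2r]$, with $|\chi'|\lesssim 1/r$, and consider the localized energy $\mathcal F_{x_0,r}(t)=\int_\R \chi_{x_0,r}(x)\,e(v(x,t))\,dx$. Differentiating in time and using equation \eqref{glpara} together with integration by parts, one gets $\frac{d}{dt}\mathcal F_{x_0,r}(t)=-\int \chi\, |v_t|^2 + \int \chi'\, v_t\cdot v_x$, so that $|\frac{d}{dt}\mathcal F_{x_0,r}|\leq \frac12\int\chi|v_t|^2 + \frac{C}{r^2}\int_{\mathrm{supp}\,\chi'} e(v)$. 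This is the standard localized energy-dissipation estimate; the point is that the ``flux'' term is controlled by the energy in the annulus $\{r\leq |x-x_0|\leq 2r\}$ divided by $r^2$, together with a dissipation term that is absorbed.

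The heart of the matter is a \emph{lower bound} on the energy present near any point of $\mathcal D(t)$: by the clearing-out Lemma \ref{clearingout}, if $x\in\mathcal D(t)$ then every unit interval containing $x$ carries energy at least $\eta_0$, hence any interval of length $2r$ centered at a point of $\mathcal D(t)$ carries energy $\geq\eta_0$. Combined with the global bound $\mathcal E(v(\cdot,t))\leq M_0$ and Corollary \ref{interface}, the support of the energy outside a large neighborhood of $\mathcal D(0)$ is both quantitatively small and, if nonempty, quantitatively bounded below — this is the tension that drives the argument. Concretely, suppose for contradiction that at some time $t_1$ a point of $\mathcal D(t_1)$ lies at distance more than $R$ from $\mathcal D(0)$. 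Then by continuity there is a first such time and a ``traveling bump'' of energy $\geq\eta_0$ that has crossed the corridor $\mathcal D(0)+[-R,R]$. I would tile this corridor by $O(R/\alpha_0)$ overlapping windows of width comparable to $\alpha_0$ (the choice $\alpha_0=32M_0/\eta_0$ being calibrated so that initially each window away from $\mathcal D(0)$ is energy-empty, by Corollary \ref{interface} and Lemma \ref{clearingout}), and track the first window whose localized energy rises from below $\eta_0/2$ to above $\eta_0/2$. For that window, $\mathcal F$ must increase by $\eta_0/2$, and by the differential inequality above this increase is fed only by the flux term $\frac{C}{r^2}\int e(v)\leq \frac{C M_0}{r^2}$ (the energy in the neighboring window being still below threshold up to that time, or bounded by $M_0$ globally), giving a time at least $\gtrsim r^2\eta_0/(CM_0)\sim (\alpha_0/K_0)^2$ to fill one window. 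Iterating across the $\sim R/\alpha_0$ windows would only give linear-in-$R$ time, which is too weak; the exponential improvement comes from observing that in order to move the bump a distance $R$, the energy must traverse a region where it is \emph{exponentially small}, so the flux term is not $\frac{CM_0}{r^2}$ but rather $\frac{C}{r^2}\mathcal F_{\mathrm{nbr}}$ with $\mathcal F_{\mathrm{nbr}}$ itself tiny.

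To make the exponential gain precise I would set up a discrete Gronwall / geometric-cascade argument: let $\mathcal F_j(t)$ be the localized energy in the $j$-th window (ordered by increasing distance from $\mathcal D(0)$, for $j=1,\dots,N$ with $N\sim R/\alpha_0$), so that $\frac{d}{dt}\mathcal F_j \leq \frac{C}{\alpha_0^2}(\mathcal F_{j-1}+\mathcal F_{j+1})$ up to the absorbed dissipation, with $\mathcal F_N$ initially and through the relevant time window below $\eta_0$. Starting from $\mathcal F_j(0)=0$ for $j\geq 1$ (corridor initially empty by choice of $\alpha_0$) and $\mathcal F_0\leq M_0$ as the only source, a standard induction on $j$ shows $\mathcal F_j(t)\leq M_0\,\frac{(Ct/\alpha_0^2)^j}{j!}$ as long as all windows stay subcritical. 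For $\mathcal F_N(t)$ to reach $\eta_0/2$ we then need $\frac{(Ct/\alpha_0^2)^N}{N!}\geq \frac{\eta_0}{2M_0}$; using $N!\geq (N/e)^N$ and $N\sim R/\alpha_0$, this forces $t\gtrsim \alpha_0^2 \cdot (N/(eC))\cdot(\eta_0/2M_0)^{1/N}\gtrsim$ something of the form $(R/K_0)^2\exp(R/K_0)$ after absorbing constants and the $q$-dependence of $V$ into $K_0$ (the factor $\alpha_0^2 N \sim \alpha_0 R$ gives the $R^2$, and crucially one rearranges so the combinatorial factor $N!\sim (R/(e\alpha_0))^{R/\alpha_0}$ produces $\exp(cR)$). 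The main obstacle, and the step requiring the most care, is precisely this last bookkeeping: ensuring that the ``subcriticality'' hypothesis ($\mathcal F_j<\eta_0/2$ for all $j$) is self-consistent up to the claimed time via a continuity/bootstrap argument, that the cutoff overlaps and the dissipation terms are genuinely absorbed rather than merely bounded, and that the clearing-out threshold $\eta_0$ interacts correctly with the window width so the induction closes — this is where the constant $K_0$ (depending on $V$ through $\lambda_i^\pm$, $\upmu_0$, $\eta_0$, and on $M_0$) is pinned down.
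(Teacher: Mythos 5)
Your cascade step contains a decisive arithmetic error, and behind it a conceptual gap. From $\mathcal F_N(t)\leq M_0\,(Ct/\alpha_0^2)^N/N!$ the threshold condition $(Ct/\alpha_0^2)^N/N!\geq \eta_0/(2M_0)$ gives, after taking the $N$-th root, $Ct/\alpha_0^2\gtrsim (N/e)\,(\eta_0/2M_0)^{1/N}$; since $(\eta_0/2M_0)^{1/N}\to 1$, this yields only $t\gtrsim \alpha_0^2 N/(Ce)\sim \alpha_0 R$, i.e.\ a time \emph{linear} in $R$. The huge factor $N!$ is exactly compensated by the $N$-th power of $t$, so no $\exp(cR)$ survives; your sentence claiming that ``$N!\sim (R/(e\alpha_0))^{R/\alpha_0}$ produces $\exp(cR)$'' is where the bookkeeping fails. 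This is not fixable by sharper window management: a flux bound of the form (energy in the adjacent windows)$/r^2$ is saturated by the linear heat equation, which genuinely transports an order-one fraction of its energy a distance $R$ in time of order $R^2$ (Gaussian tails give smallness only for $t\ll R^2$, never an exponentially long containment time). So no argument that uses only the semi-decreasing/flux inequality of Lemma \ref{semidec} together with the clearing-out threshold can reach the claimed time scale $\left(R/K_0\right)^2\exp\left(R/K_0\right)$.

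The missing ingredient is the nonlinear structure: off the front set the solution sits in a strictly convex well of $V$ (hypothesis $(\text{H}_2)$), and this is what makes the relevant flux exponentially small rather than merely $O(M_0/r^2)$. In the paper this enters through the maximum-principle localization (Proposition \ref{firstvel}) and the spectral-gap parabolic estimate (Proposition \ref{estimpar}), which show that off the front the energy decays at rate $\lambda_i^-$ down to a residual of size $\exp(-cR)$; moreover the localized identity \eqref{localizedenergy} is applied with test functions that are \emph{affine} near the front, so that the flux term involves only the discrepancy $\xi_\eps$ integrated against $\ddot\chi$ supported off the front, which is exponentially small — that is the sole source of the $\exp(R/K_0)$. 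Finally, your ``subcriticality bootstrap'' does not address what happens when fronts split or annihilate while staying inside the corridor; the paper handles this with the $\kappa$-confinement covering, the stopping times $T_1,T_2,T_3$, and the counting argument of Proposition \ref{prop:core} (splitting can occur at most $M_0/\eta_0$ times, dissipation events at most $4M_0/\eta_0$ times), which is what makes the iteration terminate with the stated bound.
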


Theorem \ref{maintheo} expresses the fact the  motion of the front set is slow when considered at sufficiently large scale $R.$ Indeed, its maximal average speed should not  exceed 
$$c(R)=K_0^2R^{-1}\exp\left(-\frac{R}{K_0}\right).$$ 
For large $R$ this speed is exponentially small. As a matter of fact, our proofs rely on an asymptotic expansion for large values of $R$. 
      
Since $R$ represents the typical length scale which is considered, if one wishes to work on fixed domains, it is sometimes interesting to introduce a small parameter $\eps>0$, and consider the more general form of the equation \eqref{glpara} given by
    \begin{equation}
 \label{glepsilon}
 {\partial_t v_\eps}- \partial_{xx}{v_\eps}=-\frac{1}{\eps^2}\nabla V(v_\eps).
  \end{equation}
Notice that if $v$ is a solution to \eqref{glpara},  then the map $v_\eps$ given by 
\begin{equation}
\label{rescaling}
v_\eps(x, t)=v(\eps x, \eps^2t)
 \end{equation}
is a solution to \eqref{glepsilon}. In this setting the statement of Theorem \ref{maintheo} may easily be translated if we replace equation \eqref{glpara} by equation \eqref{glepsilon} and the map $v$ by the map $v_\eps.$ 
The energy density $e_\eps(u)$ and the energy functional $\mathcal E_\eps(u)$ are  given respectively  by 
\begin{equation}
\label{eeps}
e_\eps(u)=\frac{\eps}{2} \dot u ^2+ \frac{1}{\eps} V(u)\quad  {\rm and \  }\quad 
\mathcal E_\eps(u)=\int _\R e_\eps(u).
\end{equation}
The energy bound ${(\text{H}_0)}$  is translated into
 $$
\mathcal E_\eps(v^0)\leq M_0<+\infty.  \leqno\text{$(\text{H}_0^{\, \eps})$}
 $$ We may define accordingly  the front set $ \mathcal D(t)=\mathcal D(v_\eps(\cdot, t))$,  so that if the initial datum satisfies the initial bound  ${(\text{H}_0)^{\,\eps}}$, this set is of size  of order $\eps$, in view of Corollary \ref{interface} and \eqref{rescaling}, and shrinks, as $\eps$ converges to zero, to a finite set, which is sometimes termed the defect set.
Considering $R>0$ fixed but letting $\eps>0$ vary, the  statement of Theorem \ref{maintheo} turns  then out to be  equivalent to the following statement.
 
\begin{theorem1bis}
 Assume that the potential $V$  satisfies assumptions  
 ${(\text{H}_1)}$, ${(\text{H}_2)}$ and ${(\text{H}_3)}$, let $\eps>0$  be given and consider a solution $v_\eps$ to \eqref{glepsilon}. Assume  that the initial datum $v^0_\eps(\cdot)=v_\eps(\cdot, 0)$ satisfies the energy bound ${(\text{H}_0^{\, \eps})}.$ There exists a constant $K_0>0$ depending only on the potential $V$ and on $M_0$ such that if $R\geq \alpha_0\eps,$ then
\begin{equation}\label{eq:main2}
\mathcal D (t) \subset \mathcal D(0) + [-R,R]
\end{equation}
provided 
$$ 0\leq t \leq  \left(\frac{R}{K_0}\right)^2 \exp\left(\frac{R}{K_0\eps}\right).$$ 
\end{theorem1bis}
As we will  recall in our short historical survey on the topic below,   slow motion of fronts  has a long history in the mathematical literature,  and it is  mainly  described  considering the form \eqref{glpara} and assuming $\eps$ is asymptotically small.

\medskip

Combining  Theorem 1bis with dissipation estimates off the front set (see Section \ref{off} below), we obtain

\begin{theorem}
	\label{thm:pouf}
	Assume that $V$ and $v_\eps$ are as in Theorem 1bis. There exist a constant $K_1>0$ depending only on $V$ such that if $x_0\in \R$ and $R\geq \alpha_0\eps$ satisfy 
	$$
[x_0-2R,x_0+2R] \cap \mathcal D(0) = \emptyset,
	$$
	then
	\begin{equation}\label{eq:plouf1}
		\eps^3|\partial_t v_\eps|^2 + e_\eps(v_\eps) \leq K_1 M_0\eps^{-1} \left[ \exp\left(-\frac{t}{K_1\eps^2}\right) + \frac{t}{R^2}\exp\left(-\frac{R}{K_1\eps}\right)  \right],
	\end{equation}
	pointwise on $[x_0-\frac12R,x_0+\frac12R]\times [\eps^2, (\frac{R}{K_0})^2\exp(\frac{R}{K_0\eps})].$
\end{theorem}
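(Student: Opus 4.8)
The plan is to prove \eqref{eq:plouf1} in three steps: use Theorem 1bis to trap the front set far from $x_0$; on the trapped region, where \eqref{glepsilon} is genuinely dissipative, establish a localized energy‑decay estimate; and then upgrade that integral decay to a pointwise bound. \emph{Step 1: confining the front.} Since $R\ge\alpha_0\eps$, Theorem 1bis applies with radius $R$ and gives $\mathcal{D}(t)\subset\mathcal{D}(0)+[-R,R]$ for $0\le t\le T:=(R/K_0)^2\exp(R/(K_0\eps))$. As $\mathcal{D}(0)$ is closed and disjoint from the compact set $[x_0-2R,x_0+2R]$, every point of $\mathcal{D}(0)$ lies at distance $>2R$ from $x_0$, so every point of $\mathcal{D}(0)+[-R,R]$ lies at distance $>R$ from $x_0$, and therefore $\mathcal{D}(t)\cap[x_0-R,x_0+R]=\emptyset$ for all $0\le t\le T$. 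By \eqref{frontset} this means $\mathrm{dist}(v_\eps(x,t),\Sigma)<\upmu_0$ on the connected cylinder $Q:=[x_0-R,x_0+R]\times[0,T]$; since the balls $B(\upsigma_j,\upmu_0)$ are pairwise disjoint and $v_\eps$ is continuous, there is a single index $i$ with $v_\eps(Q)\subset B(\upsigma_i,\upmu_0)$. On that ball \eqref{eq:conv} together with $\nabla V(\upsigma_i)=0$ gives, along the trajectory in $Q$,
\[
(v_\eps-\upsigma_i)\cdot\nabla V(v_\eps)\ge\tfrac12\lambda_i^-|v_\eps-\upsigma_i|^2\ge\tfrac{\lambda_i^-}{2\lambda_i^+}V(v_\eps),\qquad |\nabla V(v_\eps)|^2\ge\tfrac{(\lambda_i^-)^2}{4\lambda_i^+}V(v_\eps),
\]
and this uniform convexity is all that will be used about $V$ below.

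\emph{Step 2: localized dissipation off the front — the main work.} Fix a cutoff $\chi$ with $\chi\equiv1$ on $[x_0-\tfrac R2,x_0+\tfrac R2]$, $\mathrm{supp}\,\chi\subset(x_0-R,x_0+R)$ and $|\chi'|\le C/R$, and set $\mathcal{G}(t)=\int\chi^2 e_\eps(v_\eps(\cdot,t))$ and $\mathcal{V}(t)=\int\chi^2\eps^3|\partial_t v_\eps(\cdot,t)|^2$. Differentiating $\mathcal{G}$ and using \eqref{glepsilon} gives $\mathcal{G}'=-\eps\int\chi^2|\partial_t v_\eps|^2-2\eps\int\chi\chi'\,\partial_x v_\eps\cdot\partial_t v_\eps$; differentiating the potential part $\int\chi^2\eps^{-1}V(v_\eps)$ and integrating by parts brings in the terms $-\eps^{-1}\int\chi^2\nabla^2V(v_\eps)\partial_x v_\eps\cdot\partial_x v_\eps$ and $-\eps^{-3}\int\chi^2|\nabla V(v_\eps)|^2$, which by Step 1 are negative and of size $\eps^{-2}$ relative to $\mathcal{G}$; and differentiating $\mathcal{V}$, then using the equation satisfied by $\partial_t v_\eps$, produces the term $-\eps^{-1}\int\chi^2\nabla^2V(v_\eps)\partial_t v_\eps\cdot\partial_t v_\eps\le-\tfrac{\lambda_i^-}{2\eps}\int\chi^2|\partial_t v_\eps|^2$. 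Assembling these, together with auxiliary bookkeeping quantities such as $\int\chi^2\eps^{-1}|v_\eps-\upsigma_i|^2$ and $\int\chi^2(v_\eps-\upsigma_i)\cdot\partial_t v_\eps$ (used to upgrade the gradient dissipation to an $\eps^{-2}$ rate) and absorbing the various cross terms with Young's inequality, one arrives at a quantity $\mathcal{H}(t)$ comparable to $\mathcal{G}(t)+\mathcal{V}(t)$ satisfying
\[
\frac{d}{dt}\mathcal{H}(t)\le-\frac{c}{\eps^2}\,\mathcal{H}(t)+\frac{C}{R^2}\int_{A}e_\eps(v_\eps(\cdot,t)),\qquad A:=\{\,\tfrac R2\le|x-x_0|\le R\,\},
\]
with $c,C$ depending only on $V$ (through $\lambda_i^\pm$).

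The crux is that the annulus error must be rendered exponentially small. The annulus $A$ lies inside $Q$, but it abuts the region where the front is allowed to sit, so the gain $\exp(-R/(C\eps))$ is not a consequence of distance to the front — it has to be produced from the uniform convexity. The device is an Agmon‑type weighted estimate: one repeats the computations above for the weighted unknown $e^{\phi}(v_\eps-\upsigma_i)$ with $\phi(x)=\beta\eps^{-1}\,\mathrm{dist}\big(x,\R\setminus[x_0-R,x_0+R]\big)$ and $\beta$ small enough that $|\phi'|^2<\tfrac12\lambda_i^-\eps^{-2}$, so that the convexity term dominates the weight and $e^{\phi}(v_\eps-\upsigma_i)$ is controlled in $L^2$ by its boundary values, where $\phi=0$ and where $e_\eps$ is bounded by the global energy $M_0$; equivalently, one iterates the unweighted decay estimate across $\sim R/\eps$ nested sub‑intervals of width $\sim\eps$, losing a fixed fraction at each step. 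Either way, $\int_A e_\eps(v_\eps(\cdot,t))\le CM_0\exp(-R/(C\eps))$ for $0\le t\le T$. Inserting this into the differential inequality and integrating from time $\eps^2$, where parabolic smoothing gives $\mathcal{H}(\eps^2)\le CM_0$, yields
\[
\mathcal{G}(t)+\mathcal{V}(t)\le\mathcal{H}(t)\le CM_0\Big[\exp\big(-\tfrac{ct}{\eps^2}\big)+\tfrac{t}{R^2}\exp\big(-\tfrac{R}{C\eps}\big)\Big],\qquad \eps^2\le t\le T.
\]

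\emph{Step 3: from integral to pointwise.} It remains to turn this into a pointwise bound on $[x_0-\tfrac R2,x_0+\tfrac R2]$. For the potential part one uses the one‑dimensional inequality $\|f\|_{L^\infty(I)}^2\lesssim\|f\|_{L^2(I)}\|f'\|_{L^2(I)}+|I|^{-1}\|f\|_{L^2(I)}^2$ with $f=v_\eps-\upsigma_i$, together with $\int|v_\eps-\upsigma_i|^2\lesssim\eps\,\mathcal{G}$ and $\int|\partial_x v_\eps|^2\lesssim\eps^{-1}\mathcal{G}$, which gives $\eps^{-1}V(v_\eps)\lesssim\eps^{-1}\mathcal{G}$ pointwise; for the gradient and velocity terms one first runs the analogous localized energy estimates for $\partial_x v_\eps$ and $\partial_t v_\eps$ (differentiating \eqref{glepsilon}, whose zeroth‑order coefficient is again uniformly positive on $Q$), which introduces the parabolic smoothing factor forcing $t\ge\eps^2$, and then applies the same interpolation; these are standard parabolic‑regularity arguments. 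Collecting the constants into a single $K_1=K_1(V)$ then gives \eqref{eq:plouf1} on $[x_0-\tfrac R2,x_0+\tfrac R2]\times[\eps^2,T]$. I expect the genuinely delicate point to be the exponential localization of the annulus error in Step 2: the rate $\exp(-R/(C\eps))$ must be extracted from the convexity rather than from a distance bound, and the Agmon weight (or the equivalent finite iteration) has to be carried consistently through the time‑differential inequalities for $\mathcal{G}$, $\mathcal{V}$ and their first‑order analogues at once, while keeping every constant dependent only on $V$ and $M_0$.
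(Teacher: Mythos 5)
Your overall architecture is the same as the paper's (confine the front for all $t\le (\frac{R}{K_0})^2\exp(\frac{R}{K_0\eps})$ via Theorem 1bis, then exploit the convexity of $V$ near $\upsigma_i$ to get decay of the localized energy off the front, then upgrade to pointwise bounds by interior parabolic regularity on cylinders of size $\eps\times\eps^2$ after rescaling — the paper literally cites Proposition \ref{estimpar} and a scaled regularity estimate and is done). The gap is in your Step 2, and it is the step you yourself flag as the crux: the claim $\int_A e_\eps(v_\eps(\cdot,t))\le CM_0\exp(-R/(C\eps))$ for \emph{all} $0\le t\le T$ is false. First, at $t=0$ (and at $t=\eps^2$) the hypothesis only says that $v_\eps$ is $\upmu_0$-close to $\Sigma$ on $[x_0-2R,x_0+2R]$; the energy density there is in no way small, so $\int_A e_\eps$ at small times is only $O(M_0)$, and any weighted (Agmon-type) parabolic estimate must carry this initial contribution, which decays only like $\exp(-\lambda_i^- t/\eps^2)$ and is therefore not exponentially small in $R/\eps$ until $t\gtrsim R\eps$. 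Second, even for large times, your annulus $A=\{\frac R2\le|x-x_0|\le R\}$ reaches the edge of the front-free region: Theorem 1bis only guarantees $\mathcal D(t)\cap[x_0-R,x_0+R]=\emptyset$, so the front may sit at distance $O(\eps)$ from the outer edge of $A$, where $e_\eps\sim\eps^{-1}$; hence $\int_A e_\eps$ can be of order $\eta_0$, not $\exp(-R/(C\eps))$, no matter how long you wait. The correct statement one can prove (and the one the paper proves in Proposition \ref{estimpar}) necessarily keeps the two-term structure $\exp(-c(s-t)/\eps^2)+\frac{s-t}{r^2}\exp(-cr/\eps)$; it cannot be collapsed into a uniform-in-time exponential smallness of the annulus energy.

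The paper's mechanism for producing the factor $\exp(-cR/\eps)$ is different from what you propose and avoids this problem entirely: in Proposition \ref{estimpar} the cutoff is applied to $w_\eps=\partial_x v_\eps$ (and to $v_\eps-\upsigma_i$), and Duhamel's formula for the damped heat operator $\partial_t-\partial_{xx}+\lambda_i^-/\eps^2$ is used. The source term is supported in the transition region of the cutoff, whose energy is only bounded by $M_0/\eps$ — no smallness is needed there; the exponential gain comes from evaluating the heat kernel at distance $\ge r/6$ from the target interval and optimizing the product $\exp(-\frac{r^2}{144(s-\tau)})\exp(-\frac{\lambda_i^-}{\eps^2}(s-\tau))$ over $s-\tau$, which yields $\exp(-\sqrt{\lambda_i^-/2}\,\frac{r}{6\eps})$. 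So the smallness is produced by the interplay of spatial separation and convexity damping inside the representation formula, not by smallness of the boundary-layer energy itself. Your proposal could be repaired along these lines (or by shrinking the support of $\chi$ so that ${\rm supp}\,\chi'$ stays at distance $\gtrsim R$ from $\R\setminus[x_0-R,x_0+R]$ and then propagating the two-term bound through a nested iteration), but as written the differential inequality is fed with an input that does not hold, so the Gronwall step does not go through. Given that Proposition \ref{estimpar} is already available in the paper, the intended proof is simply: Theorem 1bis plus Proposition \ref{estimpar} give \eqref{eq:interetape}-type integral bounds on $\eps$-intervals in $[x_0-\frac12R,x_0+\frac12R]$, and scaled interior parabolic regularity converts them into \eqref{eq:plouf1}; your Step 3 is essentially this last point, stated in a more roundabout way.
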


Using a Gronwall type argument, we may then prove that the flow drives the solution close to a chain of stationary fronts.  More precisely, we have
 
 \begin{theorem}
\label{theglue}   Assume that $V$ and $v_\eps$ are as in Theorem 1bis. Given $R\geq \alpha_0 \eps$ there exists a relaxation time
$$
0 \leq T \leq  \left(\frac{R}{K_0}\right)^2 \exp\left(\frac{R}{K_0\eps}\right)
$$
at which $v_\eps(\cdot,T)$ possesses the following structure: There exist $K_2>0$ depending only on $V$ and $M_0$, a length scale of order $R$
$$
2^{-4\frac{M_0}{\eta_0}}\frac{R}{K_2} \leq r \leq \frac{R}{K_2}, 
$$
a collection of points $\{a_j\}_{j\in J}$ in $\R$, and a corresponding collection of functions $\{U_j\}_{j\in J}$ defined on $[-\frac{r}{\eps},\frac{r}{\eps}]$ with values into $\R^k$ such that
\begin{enumerate}
	\item $0\leq \sharp J  \leq \frac{M_0}{\eta_0},$
	\item $a_j \in \mathcal D(T)$  \qquad $\forall\ j\in J,$
	\item ${\rm dist}\left(a_j,\mathcal D(0)\right)\leq R,$\qquad $\forall\ j\in J,$
	\item ${\rm dist}(a_i,a_j) > 4r$\qquad $\forall\ i\neq j\in J,$
	\item Each $U_j$ is a solution to the stationary equation \eqref{ordi} with zero discrepancy:
		$$
		-\partial_{xx}U_j = -\nabla V(U_j) , \qquad \xi(U_j)=\frac{|\partial_x U_j|^2}{2}-V(U_j) =0,
		$$
	\item We have the estimate
		$$
\big\| v_\eps(\cdot, T)- U_j\left(\tfrac{\cdot-a_j}{\eps}\right)\big \| + \eps \big\|\partial_x\left( v_\eps(\cdot, T)- U_j\left(\tfrac{\cdot-a_j}{\eps}\right)\right) \big \| \leq K_2 \exp\left(-\frac{r}{K_2\eps}\right), 
$$
in $L^\infty([a_j-r,a_j+r]),$ for each $j\in J$,
\item
	If $I$ is an interval disjoint from $\cup_{j\in J}[a_j-r,a_j+r]$,  we have 
$$
\| v_\eps(., T)- \sigma_i\| + \eps\| \partial_x v_\eps(\cdot,T) \| \leq K_2 \exp\left(-\frac{r}{K_2\eps}\right),
$$
in $L^\infty(I),$ for some $\sigma_i \in \Sigma.$
\end{enumerate}
\end{theorem}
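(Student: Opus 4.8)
I would deduce the relaxation structure from the global confinement of Theorem 1bis, the covering of Corollary~\ref{interface}, the dissipation estimates off the front set (Theorem~\ref{thm:pouf}) and the energy identity \eqref{energyidentity}, combined with two pigeonhole arguments --- one in time, one in scale --- and a reduction to the finite family of zero--discrepancy stationary solutions. The constants would be fixed successively: $K_1$ is the constant of Theorem~\ref{thm:pouf}; $K_2$ absorbs the elliptic and combinatorial losses below; and $K_0$ is chosen last and so large that $(R/K_0)^2\exp(R/(K_0\eps))$ is smaller than $\rho^2\exp(\rho/(2K_1\eps))$ for every scale $\rho$ with $2^{-4M_0/\eta_0}R/K_2\le\rho\le R/K_2$. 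This last choice is what will let me invoke Theorems~1bis and \ref{thm:pouf} with an intermediate scale $r$ of order $R$ even at the (possibly very large) relaxation time $T$.

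\textbf{Step 1 (the relaxation time and a global bound on $\partial_t v_\eps$).} Passing to the variables of \eqref{glpara} via \eqref{rescaling}, the energy identity and $(\text{H}_0^\eps)$ give $\int_0^S\!\int_\R|\partial_t v|^2\le M_0$ with $S=\eps^{-2}(R/K_0)^2\exp(R/(K_0\eps))$, so some unit interval $[s_0,s_0+1]\subset[1,S]$ carries dissipation at most $M_0/S$; I set $T=\eps^2(s_0+1)$, which lies in $[\eps^2,(R/K_0)^2\exp(R/(K_0\eps))]$. Since $\|v(\cdot,s)\|_{L^\infty(\R)}\le C(M_0,V)$ for every $s$ (from the energy bound and $(\text{H}_3)$), $\phi=\partial_t v$ solves a linear parabolic equation with $L^\infty$--bounded coefficients, and $\eps$--free parabolic smoothing on $[s_0,s_0+1]$ turns the $L^2$ dissipation bound into $\|\phi(\cdot,s_0+1)\|_{L^\infty(\R)}\le C\sqrt{M_0/S}$; rescaling back, $\|\eps^2\partial_t v_\eps(\cdot,T)\|_{L^\infty(\R)}\le\exp(-R/(3K_0\eps))=:\delta$, so that $v_\eps(\cdot,T)$ solves $-\eps^2\partial_{xx}v_\eps+\nabla V(v_\eps)=-\eps^2\partial_t v_\eps$ with right--hand side $\le\delta$ everywhere. \textbf{Step 2 (the clusters).} By Theorem~1bis, $\mathcal{D}(T)\subset\mathcal{D}(0)+[-R,R]$, and by Corollary~\ref{interface} the set $\mathcal{D}(0)\cup\mathcal{D}(T)$ is covered by at most $2M_0/\eta_0$ intervals of length $2\eps$. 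Running the graph on their centers whose edges join points at distance $\le\rho$ and letting $\rho$ range over a geometric sequence of fixed ratio and length $O(M_0/\eta_0)$ ending below $R/(2K_2)$, the connected components stabilise between two consecutive scales, and I take $r$ comparable to the larger of the two, obtaining $2^{-4M_0/\eta_0}R/K_2\le r\le R/K_2$, every cluster of diameter $<r/8$, and distinct clusters at mutual distance $>4r$. Choosing, in each cluster meeting $\mathcal{D}(T)$, a point $a_j\in\mathcal{D}(T)$, the conclusions (1)--(4) are immediate, as is $\mathcal{D}(T)\subset\bigcup_{j\in J}[a_j-\tfrac r2,a_j+\tfrac r2]$; hence an interval $I$ disjoint from $\bigcup_{j\in J}[a_j-r,a_j+r]$ lies at distance $\gtrsim r$ from $\mathcal{D}(T)$.

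\textbf{Step 3 (item (7)).} On such an $I$, Lemma~\ref{clearingout} on unit subintervals plus connectedness confine $v_\eps(\cdot,T)$ to one well $B(\sigma_i,\mu_0)$; writing $w=v_\eps(\cdot,T)-\sigma_i$, Step 1 gives $-\eps^2w''+B(x)w=-\eps^2\partial_t v_\eps$ with $B(x)=\int_0^1\nabla^2V(\sigma_i+sw)\,ds\ge\tfrac12\lambda_i^-\mathrm{Id}$ by \eqref{eq:conv}; the Green's function of $-\eps^2\partial_{xx}+B$ decays at rate $\gtrsim\eps^{-1}$, so the boundary contributions are damped by $\exp(-cr/\eps)$ over the margin and the forced term is $\lesssim\delta$, giving $\|w\|_{L^\infty(I)}+\eps\|\partial_x v_\eps(\cdot,T)\|_{L^\infty(I)}\le K_2\exp(-r/(K_2\eps))$ (Theorem~\ref{thm:pouf}, used with scale $\sim r$ wherever one is far enough from $\mathcal{D}(0)$, provides the complementary pointwise bound on $\eps^3|\partial_t v_\eps|^2$). \textbf{Step 4 (items (5)--(6)).} Set $W_j(y)=v_\eps(a_j+\eps y,T)$ on $|y|\le r/\eps$; by Step 1 it solves $-W_j''+\nabla V(W_j)=h_j$ with $\|h_j\|_{L^\infty}\le\delta$ and $\|W_j\|_{C^1}\le C$, and integrating $\tfrac{d}{dy}\xi(W_j)=-h_j\cdot W_j'$ together with item (7) near $y=\pm r/\eps$ gives $\|\xi(W_j)\|_{L^\infty}\lesssim\delta\,(r/\eps)+\exp(-r/(K_2\eps))$, exponentially small. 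Thus $(W_j,W_j')$ is an exponentially accurate pseudo--orbit of $U''=\nabla V(U)$ staying exponentially close to $\{\xi=0\}$; since clearing--out and the energy bound leave it at most $M_0/\eta_0$ transitions between wells, I would shadow it by one member of the finite--dimensional family of zero--discrepancy stationary solutions on $[-r/\eps,r/\eps]$ (concatenations of stable/unstable--manifold pieces of the $\sigma_i$ with heteroclinics), via a Lyapunov--Schmidt reduction whose only non--invertible directions are the translation modes, one per transition, pinned down by matching the transition locations of $W_j$. This produces $U_j$ with $\xi(U_j)=0$ and $\|W_j-U_j\|_{C^1([-r/\eps,r/\eps])}\le K_2\exp(-r/(K_2\eps))$, i.e.\ (5)--(6).

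The hard part is Step 4: comparing the approximate solution $W_j$ with an exact one over the interval $[-r/\eps,r/\eps]$ of length $\sim R/\eps$. Because the linearisation of \eqref{ordi} along a heteroclinic chain has exponentially growing modes, smallness cannot be propagated as an initial--value problem, and the comparison must be posed as a boundary--value (Lyapunov--Schmidt) problem; the decisive point, resting on $(\text{H}_2)$ --- hyperbolicity of each $\sigma_i$, which makes the linearised operator Fredholm with kernel exactly the translations --- is that the only non--coercive directions are the finitely many translation modes, which are fixed by the front pattern of $W_j$, everything transverse being controlled by this non--degeneracy, and the exponential smallness is inherited from $\|h_j\|_\infty$ and from the error of truncating the problem on $\R$ to $[-r/\eps,r/\eps]$. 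The remaining ingredients are comparatively soft: Step 1 is a pigeonhole plus $\eps$--free parabolic smoothing, the cluster decomposition is combinatorics, and item (7) is a one--dimensional linear elliptic estimate in a convex well.
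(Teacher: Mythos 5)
Your Steps 1--3 are essentially the paper's argument (time averaging of the dissipation to pick $T$, a $\kappa$-confinement covering of $\mathcal D(T)$ at a scale $r\sim R$, and a linear elliptic estimate in a convex well for item (7), cf.\ Lemma \ref{lem:conf} and Lemma \ref{lem:elliptic}), although the paper works only with the $L^2$ bound $\|\partial_t v_\eps(\cdot,T)\|_{L^2}\lesssim R^{-1}\eps^{-1/2}\exp(-R/(2K_0\eps))$ and never needs your parabolic-smoothing upgrade to an $L^\infty$ bound. The genuine gap is in your Step 4, which is precisely the step you flag as hard. Your plan is to shadow $(W_j,W_j')$ by a member of ``the finite--dimensional family of zero--discrepancy stationary solutions'' via a Lyapunov--Schmidt reduction whose only non-invertible directions are translation modes. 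For systems ($k\ge 2$) this is not available under $(\text{H}_1)$--$(\text{H}_3)$: hyperbolicity of the wells does not imply that the kernel of the linearization along a heteroclinic is spanned by the translation mode (the paper recalls that this non-degeneracy is known in the scalar case only, the Carr--Pego argument being ``heavily'' scalar), nor is the set of zero-discrepancy solutions on a long interval a finite-dimensional family of concatenated manifold pieces in any established sense. Indeed one of the stated purposes of the paper is to avoid any structural or spectral knowledge of solutions of \eqref{ordi}, so an argument resting on such knowledge cannot close the proof as stated.

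Moreover, your premise that ``smallness cannot be propagated as an initial-value problem'' is what leads you astray: the theorem only asks for $U_j$ solving \eqref{ordi} with zero discrepancy on $[-\tfrac r\eps,\tfrac r\eps]$, not for a genuine heteroclinic chain, and the paper obtains (5)--(6) exactly by an initial-value Gronwall comparison. At $a_j\in\mathcal D(T)$ one has $\eps|u_x(a_j)|\geq\sqrt{c_0}$, so one can take $U_j(0)=v_\eps(a_j,T)$ and rescale the derivative by the unique positive factor making $\xi$ vanish (Lemma \ref{lem:discrepok} shows the correction is $O(\eps^2\|f\|_2)$); then Lemma \ref{gron} and Corollary \ref{cor:9.1} propagate the closeness over $[a_j-r,a_j+r]$. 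The Gronwall loss $\exp(A(\|u\|_\infty+1)r/\eps)$ is beaten because $\|f\|_2\lesssim\exp(-R/(2K_0\eps))$ makes the admissible window $b$ in \eqref{eq:serend} of size $\gtrsim R$, and $K_2$ is chosen large enough that $2r\le b$. So the exponential growth of the linearized flow is harmless in this quantitative regime, and no boundary-value or spectral reduction is needed; without replacing your Step 4 by such an argument (or by a proof of the non-degeneracy facts you invoke, which would amount to additional hypotheses on $V$), your proposal does not prove items (5)--(6).
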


Notice that the functions $U_j$ are defined on the interval 
$[-\frac{r}{\eps},\frac{r}{\eps}]$, which grows as $\eps$ tends to $0$ if $R$ is kept fixed, to cover the whole of $\R$. In particular if one considers a family of solutions $(v_{\eps})_{0<\eps<1}$ to  \eqref{glepsilon}  satisfying the energy bound {$(\text{H}_0^{\eps})$ and the corresponding family $U_j\equiv U_j^\eps$ obtained thanks to Theorem \ref{theglue}, then a straightforward compactness argument shows that, up to a subsequence $\eps_n \to 0$, we have the convergence
 \begin{equation}
 \label{frontignan} 
 U_j^{\eps_n} \to U_j^0, {\rm  \ as \ } n \to \infty, 
 \end{equation} 
 in $C^k(K)$ for any $k \in \R$ and any compact interval $K$ of $\R$, where the limiting map $U_j^0$  is defined on the whole of $\R$ and is a stationary front, that is a finite energy solution to \eqref{ordi}.  Loosely speaking\footnote{Loosely in particular because the $U_j$'s are {\it not} globally defined !},  one may rephrase Theorem \ref{theglue} stating  that, after some suitable time, the solution enters  an $O\left(\exp(-\frac{R}{\eps})\right)$ neighborhood of glued together stationary fronts.
  
   The next step in the analysis would be to derive a precise motion law for the fronts, or more precisely, in view of Theorem \ref{theglue}, of the points $a_j$. It is in particular of interest to determine whether  the interactions between them is repulsive or attractive. Notice however that the points $a_j$  are only defined so far up to an $O(R)$ term. This introduces an additional difficulty which is easily removed in the scalar case imposing some preferred value for $U_i(0)$, in the vectorial case the situation is more subtle. We are inclined  to believe that such results would involve more restrictive assumptions on the potential $V$ than the ones which we have used so far, which are rather mild and involve only its behavior near its zeroes. In particular, more should be known or required regarding the zoology of finite energy stationary front (concerning for instance their spectral properties).
  
   In the scalar case, it is a standard exercise to integrate equation \eqref{ordi} and to determine the set of finite energy solutions (for instance, the solution is given by formula \eqref{kink} in the case the potential is \eqref{glexemple}).  Furthermore in that case, fixing the discrepancy to zero as it is done in statement $5$ of Theorem \ref{theglue} insures that all the solutions $U_j$ are true stationary fronts\footnote{In particular, for the potential given by \eqref{glexemple}, Theorem \ref{theglue} allows to recover the convergence result of Fife and McLeod \cite{fifemac} without any reference to the maximum principle (this was already achieved in Gallay and Risler \cite{gallayrisler}), and with a quantitative error estimate (we do not rely on a compactness argument).} (that is they coincide with finite energy globally defined solutions of \eqref{ordi}).   In a forthcoming paper, we will show how these information combined with the local energy identity which is the central tool of the present paper allows to recover the motion law in the Allen-Cahn case (see the historical notes below), where  the fronts attract, and also can be extended to three wells or more, where the interactions may be attractive or repulsive, depending on the nature of the fronts.

An ultimate goal would be of course to obtain similar results in the case of systems, when  appropriate assumptions are made on the potential.  In that case, we believe that the interaction  between two  fronts is governed by the behavior as $x \to \pm\infty$ of the  corresponding finite energy stationary fronts. In order to state a conjecture which specifies the magnitude of the interaction, consider   first a given finite energy stationary solution $U$ to \eqref{ordi}, and set $\sigma^\pm= {\underset{x\to \pm \infty }\lim } U(x)$. In view of the form of the potential near its zeroes, it  can be shown that the solution converges to 
  $\upsigma^+$  (resp. $\upsigma^-$) with an asymptotic direction, that is  that the following limits exist
 $$\upomega^+= {\underset{x\to + \infty }\lim}  \frac{U(x)-\upsigma^+}{\vert U(x)-\upsigma^+\vert} \ \ \left ({\rm  resp. \ }\upomega^-= {\underset{x\to -\infty }\lim}  \frac{U(x)-\upsigma^-}{\vert U(x)-\upsigma^-\vert}\right), $$
  so that $\vert \upomega^+\vert=\vert \upomega^-\vert=1.$
  Next, we go back to Theorem \ref{theglue} and consider two  consecutive ``fronts'', say $U_1$ and $U_2$ given by its statement. We have in particular $\upsigma^+(U_1)=\upsigma^-(U_2)\equiv \upsigma.$ Let $\lambda>0$ denote the smallest  eigenvalue of the  matrix $\nabla^2V(\upsigma)$, let  $F$ denote the corresponding eigenspace, and $\Pi$ the projection onto $F$.  Our conjecture is that the interaction between the two fronts is given by 
  $$ \left( C\langle \Pi \upomega_1^+,  \Pi \upomega_2^-\rangle_{\R^k} + o(1)\right) \exp\left( -\frac{\lambda}{\eps}d\right),\qquad \text{as }\eps \to 0, $$
     where the constant $C>0$ depends on the profiles $U_1$ and  $U_2$ and $d$ denotes the distance between the two fronts.\footnote{As explained above, defining the distance between two fronts supposes first to localise them by fixing an anchor point for each of them, the constant $C$ depends of course on the definition of those anchors.}

\medskip

\noindent
{\bf A short  historical  review}.  General reaction-diffusion  of gradient type \eqref{glepsilon}  have been  widely  introduced  and used as models  in numerous branches of science, for instance  in  physics, chemistry (in particular combustion theory), or  biology,   among many others. The theory of fronts has received  extensive mathematical study, in particular the theory is highly developed in  the scalar case.  For instance, existence and uniqueness (up to translation) of traveling fronts\footnote{that is,  solutions with a constant profile which are translated with constant speed } has been established in the scalar case. It has also been shown that for  arbitrary initial data connecting the local minimizers, as time tends to infinity solutions converge towards such fronts (see e.g. the seminal work of Fife and McLeod \cite{fifemac}). More recently, part of the analysis of convergence towards traveling fronts as time tends to infinity has been extended to the case of systems by Risler \cite{risler, risler1, risler2} and Gallay and Risler\cite{gallayrisler}. The methods used in these works rely on energy estimates and compactness arguments. 

When a front connects two local minimizers with the same potential energy, it is stationary, and therefore is a solution of \eqref{ordi}. As proved by Risler, as time tends to infinity solutions for ``arbitrary'' initial data eventually converge to slowly repulsing chains of such stationary fronts.
  The next step of the analysis, when the initial data possesses several fronts, is to follow carefully  the evolution of  the various fronts from the initial time, and not only asymptotically, and possibly to estimate their speed. On a heuristic level, the speed of the fronts can be seen as the effect of the (small) interaction between them: This interaction might be attractive or repulsive depending on the nature of the potential.
  
For scalar two-wells potentials,  which are often referred to as Allen-Cahn potentials, this program was first completed in the celebrated works of Carr and Pego, and Fusco and Hale  \cite{carpego, carpego2, fuschale}, which provided  the first  rigorous mathematical derivation of very slow motion, and even derived a precise motion law for the evolution of fronts.  In their result, the initial data  is very constrained, since it is supposed to be close to optimally glued together fronts. Their method  relies on a careful analysis of the motion near these special solutions through  a thorough study of the linearized operator near the stationary solution.  The fact that the kernel for such solutions on the line contains only the  space derivative of the solutions is crucial there (the proof of  this latest statement provided in \cite{carpego} relies heavily on the fact that the solution is scalar). This type of spectral methods were later applied successfully on related  problems, for instance the Cahn-Hilliard equation (see e.g. \cite{alikakos2, alikakos1}). Other interesting papers based on that kind of ideas (sometimes termed the invariant manifold method or geometric method) are \cite{chenxy, chengene, fuschale, pinto, ei}. In particular in \cite{ei}, Ei was able to handle the interaction of a kink and an anti-kink in the vectorial case. Keller, Rubinstein and Sternberg \cite{kerustern} made related contributions in a similar direction. 
    
At least two other methods have been applied successfully in the scalar case. Firstly, the method of sub-solutions and super-solutions turns out to be extremely powerful and allowed to handle larger classes of initial data (see e.g. \cite{fifemac, chengene}). There is little hope however to extend this method to systems, since comparison principles do not hold in general for systems. Another direction is given by the global energy approach due to Bronsard and Kohn \cite{BrKo}. Using the energy  identity \eqref{energyidentity}, they were able to prove that for initial data sufficiently close to glued front solutions, the fronts have a speed slower than $O(\eps^k)$, for every $k \in \N$. The closeness to the glued solutions is expressed in their paper in terms of energy estimates and assumptions which are reminiscent of concepts of Gamma-convergence.  Grant  \cite{grant} improved the method to obtain an exponentially small upper bound of the form $O\left(\exp(-\frac{c}{\eps})\right)$ imposing however stronger conditions on the initial data. The method was extended to functionals  with higher order derivatives in \cite{vandervorst}. Finally, in \cite{ottorez} through a more abstract setting of the problem, Otto and Reznikoff were able to recover some of the results in \cite{chengene} through global energy methods.

At this stage it is worthwhile to emphasize that all mentioned results use more or less the properties of the solutions to \eqref{ordi} (in the case of Allen-Cahn, these are unique up to translations and sign change).\\

   The aim of our present paper is to extend the analysis to the case of systems, to relax the assumptions on the preparedness of the initial data, and also to handle possible annihilation of fronts. Our approach bears many  analogies with the  energy method of Bronsard, Kohn and Grant, however, instead of using the global energy identity we use a local version of it, which is combined with parabolic estimates away from the fronts. Our  ideas are partially borrowed from our earlier work on the motion of vortices in the two-dimensional parabolic Ginzburg-Landau equation \cite{BOS4,BOS5} as well  as  from earlier works on the topic by Lin, Jerrard and  Soner \cite{Li, JeSo}. A new  technical difficulty  which occurs in  the present paper is that we are able to handle, for the evolution of localized energy density, test functions which are affine (and therefore change sign) near the front, whereas (positive) quadratic test functions were extremely useful in the context of the Ginzburg-Landau vortices. In contrast with the results obtained so far in the scalar case, our results {\it do not}  rely on the properties nor the existence of solutions to \eqref{ordi} (as a matter of fact, the number and the properties of solutions to \eqref{ordi} might be much more involved in the multi-dimensional case than in the scalar case). Roughly speaking the heart of our method is precisely to avoid regions where the solution becomes close to solution to \eqref{ordi}.\\

  \subsection{ Elements in the proofs}    
    For the proofs of the main results, we will work with the parameter $\eps$ (equation \eqref{glpara} being a special case for the value $\eps=1$). The most difficult part corresponds to the case where $\eps$ is small with respect to $R.$ 
    
      In order to analyze the  evolution in time  of the concentration sets of the energy, we invoke the localized version of \eqref{energyidentity}, which  writes, for   a smooth  test function $\chi$  with compact support in $\R$
   \begin{equation}
\label{eq:loca1}
\frac{d}{dt}\int_{\R\times\{t\}}  \hspace{-10pt}e_\eps(v_\eps) \chi(x)\,dx = -
\int_{\R\times\{t\}} \hspace{-10pt} \eps|\partial_t v_\eps|^2\chi(x)\,dx -
\int_{\R\times\{t\}} \hspace{-10pt}\eps\partial_t v_\eps \nabla v_\eps\cdot \nabla \chi \, dx.
\end{equation}
A few integration by parts yield  the classical formula \eqref{localizedenergy} below.

   \begin{lemma} 
   \label{alpha}Let $\chi$ be a smooth function with compact support on $\R$. Then, we have the identity
\begin{equation}
\label{localizedenergy}
\frac{d}{dt}\int_{\R} \chi(x)\,e_\eps(v_\eps)dx= - \int_{\R\times\{t\}}
\eps\chi(x) |\partial_t
v_\eps|^2dx+\mathcal{F}_S(t,\chi,v_\eps),
\end{equation}
where, the term $\mathcal{F}_S$, is given by
 \begin{equation}
 \begin{split}
\mathcal{F}_S(t,\chi,v_\eps)= \int_{\R \times\{t\}} \left(    
\left[ \eps \frac{\dot v_\eps^2}{2}-\frac{V(v_\eps)}{\eps}\right ]\ddot \chi
\right)\,dx.
\end{split}
\end{equation}
\end{lemma}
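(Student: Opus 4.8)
Here \eqref{eq:loca1} is already available, so the task reduces to carrying the flux term of \eqref{eq:loca1} into the form of $\mathcal F_S$ by one more use of the equation \eqref{glepsilon} and a couple of integrations by parts in the space variable. First, for completeness, I would recall how \eqref{eq:loca1} is obtained: differentiating $\int_\R \chi\, e_\eps(v_\eps)$ under the integral sign gives $\int_\R \chi\big(\eps\,\dot v_\eps\cdot\partial_t\dot v_\eps + \eps^{-1}\nabla V(v_\eps)\cdot\partial_t v_\eps\big)\,dx$, and integrating by parts in $x$ in the first term — no boundary contribution since $\chi$ has compact support — turns this into $-\int_\R \chi\big(\eps\,\ddot v_\eps - \eps^{-1}\nabla V(v_\eps)\big)\cdot\partial_t v_\eps\,dx - \int_\R \eps\,\dot v_\eps\cdot\partial_t v_\eps\,\dot\chi\,dx$; by \eqref{glepsilon} the bracket equals $\eps\,\partial_t v_\eps$, which produces the $-\int_\R \eps\chi|\partial_t v_\eps|^2$ term together with the flux term $-\int_\R \eps\,\dot v_\eps\cdot\partial_t v_\eps\,\dot\chi\,dx$.

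The heart of the argument is then to rewrite that flux term. I would substitute, using \eqref{glepsilon} a second time, $\partial_t v_\eps = \ddot v_\eps - \eps^{-2}\nabla V(v_\eps)$, so that
\[
-\int_\R \eps\,\dot v_\eps\cdot\partial_t v_\eps\,\dot\chi\,dx = -\int_\R \eps\,\dot v_\eps\cdot\ddot v_\eps\,\dot\chi\,dx + \int_\R \frac{1}{\eps}\,\dot v_\eps\cdot\nabla V(v_\eps)\,\dot\chi\,dx .
\]
By the chain rule $\dot v_\eps\cdot\ddot v_\eps = \tfrac12\,\partial_x(\dot v_\eps^2)$ and $\dot v_\eps\cdot\nabla V(v_\eps) = \partial_x\big(V(v_\eps)\big)$, so one further integration by parts in each of the two integrals moves the $x$-derivative onto $\chi$ and leaves exactly $\int_\R\big(\eps\tfrac{\dot v_\eps^2}{2} - \tfrac{V(v_\eps)}{\eps}\big)\ddot\chi\,dx = \mathcal F_S(t,\chi,v_\eps)$. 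Adding this to the already-identified first term of \eqref{eq:loca1} gives \eqref{localizedenergy}.

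I do not expect a genuine obstacle: this is the classical localized energy identity and the computation is just bookkeeping of signs together with the chain rule. The only points deserving a line of justification are the differentiation under the integral sign and the spatial integrations by parts; both are legitimate because $\chi$ is compactly supported and $v_\eps$ enjoys interior parabolic regularity, while the finite-energy assumption $(\text{H}_0^{\eps})$ — equivalently the energy identity \eqref{energyidentity} — ensures that $V(v_\eps)$, $\dot v_\eps^2$ and $\partial_t v_\eps$ are integrable against the bounded, compactly supported weights $\chi,\dot\chi,\ddot\chi$ on each time slice, so that all the integrals written above make sense and the manipulations are valid.
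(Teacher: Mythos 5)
Your computation is correct and follows exactly the route the paper indicates: starting from the localized identity \eqref{eq:loca1}, you use equation \eqref{glepsilon} once more in the flux term and two spatial integrations by parts (with $\dot v_\eps\cdot\ddot v_\eps=\tfrac12\partial_x(\dot v_\eps^2)$ and $\dot v_\eps\cdot\nabla V(v_\eps)=\partial_x(V(v_\eps))$) to produce the discrepancy against $\ddot\chi$, which is precisely the ``few integrations by parts'' the paper invokes. No gap; the justification of differentiation under the integral and the absence of boundary terms via the compact support of $\chi$ is all that is needed.
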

\medskip

 The first term on the right hand side of identity \eqref{localizedenergy}  stands for local dissipation, whereas the second might be interpreted as  a flux. The quantity
 \begin{equation}
 \label{discrepance}
  \xi_\eps \equiv[\eps \frac{\dot v_\eps^2}{2}-\frac{V(v_\eps)}{\eps}]
  \end{equation}
  is   sometimes   referred to as {\bf the discrepancy term}  in the literature. For  solutions of the ordinary differential equation 
    \begin{equation}
    \label{glordi}
    -{u_\eps}_{xx}+\frac{1}{\eps^2}\nabla V(u_\eps)=0  \qquad  { \rm on} \  I,
    \end{equation}
for some interval $I$, the discrepancy is constant, and in particular it vanishes if the interval is the whole of $\R$ and the solution is a heteroclinic between two of the wells. A general idea, which is underlying our analysis, is the  fact that front sets in the parabolic equations relax quickly to solutions to \eqref{glordi}, so that we may take advantage of some properties of the ode \eqref{glordi}, in particular in order to estimate the driving force  $\mathcal{F}_S$.
 
 \medskip
 
  We will make use of  formula \eqref{localizedenergy} for  a rather specific  choice of test functions $\chi$, namely  test functions which are affine  near the defect set, so that  $\ddot\chi$ vanishes there, and one has merely, in view of Lemma \ref{alpha}, to estimate the discrepancy term off the defect set. An important step in our  proofs is to show that if the solution is locally close to one of the minimizers $\upsigma_i$ at a given time, it remains so for a short while, and the equation has strong smoothing properties on the corresponding space-time region.

We have
   
\begin{proposition}
\label{regularisation}
Assume that $v_\eps$ is a solution to \eqref{glepsilon} verifying assumption $(H_0)$. Let $t\geq 0$, $x\in \R$ and $r\geq \alpha_0 \eps$ be such that 
\begin{equation}
\mathcal D(t) \cap [x-r,x+r] = \emptyset.
\end{equation}
Then, 
\begin{equation}\label{eq:kloden}
\mathcal D(s) \cap [x-r/2,x+r/2] = \emptyset
\end{equation}
for every $t\leq s \leq t+ \alpha_0^{-3}r^2.$
Moreover, for such $s$, 
\begin{equation}\label{eq:mouloud3}
\int_{x-r/2}^{x+r/2} e_\eps(v_\eps(y,s))\, dy \leq M_0(1+4\frac{\lambda_i^+}{\lambda_i^-})\left[ \exp(-\frac{\lambda_i^-}{\eps^2}(s-t)) + 2^{14} \frac{s-t}{r^2} \exp(-\sqrt{\frac{\lambda_i^-}{2}}\frac{1}{12}\frac{r}{\eps})\right],
\end{equation} 
where $\lambda_i^-$ and $\lambda_i^+$ are defined in $(H_2)$ for the corresponding potential well.
\end{proposition}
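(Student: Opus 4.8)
The plan is to localise around the single well that $v_\eps(\cdot,t)$ visits on $[x-r,x+r]$ and to combine two mechanisms. Near a non-degenerate minimum $\upsigma_i$ the square distance $|v_\eps-\upsigma_i|^2$ is a subsolution of a \emph{linear} parabolic equation carrying the strongly confining zeroth order coefficient $\lambda_i^-/\eps^2$; this produces both an exponential-in-time relaxation at rate $\lambda_i^-/\eps^2$ and an exponential-in-space localisation at rate $\sqrt{\lambda_i^-}/\eps$. Then the localised energy identity of Lemma~\ref{alpha} converts this confinement into decay of the energy, the only influx being a flux supported where the test function bends, hence --- for a test function affine on $[x-r/2,x+r/2]$ --- deep inside the clean interval, where by the first mechanism the energy is already exponentially small.

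\textbf{Set-up and persistence of the clean interval \eqref{eq:kloden}.} Since $\mathcal D(t)\cap[x-r,x+r]=\emptyset$ and the balls $B(\upsigma_j,\upmu_0)$ are pairwise disjoint, $v_\eps(\cdot,t)$ takes its values on the connected set $[x-r,x+r]$ in a single ball $B(\upsigma_i,\upmu_0)$. With $w:=v_\eps-\upsigma_i$, a direct computation from \eqref{glepsilon} gives, wherever $|w|\le\upmu_0$,
\[
\partial_s|w|^2-\partial_{yy}|w|^2=-2|\partial_y w|^2-\frac{2}{\eps^2}\,w\cdot\nabla V(\upsigma_i+w)\ \le\ -\frac{\lambda_i^-}{\eps^2}\,|w|^2,
\]
using $w\cdot\nabla V(\upsigma_i+w)\ge\frac12\lambda_i^-|w|^2$ and $V(\upsigma_i+w)\le\lambda_i^+|w|^2$, both consequences of \eqref{eq:conv}. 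Hence $f:=|w|^2$ is a subsolution of $\partial_s f-\partial_{yy}f+\frac{\lambda_i^-}{\eps^2}f=0$ on any rectangle on which $v_\eps$ remains in $B(\upsigma_i,\upmu_0)$, so by the maximum principle it is bounded there by $\upmu_0^2\exp(-\frac{\lambda_i^-}{\eps^2}(s-t))$ plus $\upmu_0^2\exp(-\sqrt{\lambda_i^-}\,\mathrm{dist}(y,\{x\pm r\})/\eps)$; moreover the first term also obeys the Gaussian bound $\exp(-\mathrm{dist}(y,\{x\pm r\})^2/(4(s-t)))$, which at distance $\ge r/2$ over a time $\le\alpha_0^{-3}r^2$ is at most $\exp(-c\alpha_0^3)\ll\upmu_0^2$ since $\alpha_0$ is large. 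For \eqref{eq:kloden} I would run a standard continuation argument: if $s^\ast<t+\alpha_0^{-3}r^2$ were the first time $[x-r/2,x+r/2]$ is hit, then on the cylinder shrinking linearly from $[x-r,x+r]$ at $s=t$ to $[x-r/2,x+r/2]$ at $s=s^\ast$ the solution stays in $B(\upsigma_i,\upmu_0)$, so $f$ is a subsolution there, and comparison with the barriers above forces $f\le\frac12\upmu_0^2$ on $[x-r/2,x+r/2]\times[t,s^\ast]$ as soon as $r\ge\alpha_0\eps$; by continuity of $s\mapsto v_\eps(\cdot,s)$ in $C^0_{\mathrm{loc}}$ this contradicts the definition of $s^\ast$. (Alternatively one closes this bootstrap using the clearing-out Lemma~\ref{clearingout} applied to \eqref{eq:mouloud3}.)

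\textbf{Energy decay \eqref{eq:mouloud3}.} Fix $\psi\in C^\infty_c(\R)$ with $\psi\equiv1$ on $[x-r/2,x+r/2]$, $\mathrm{supp}\,\psi\subset[x-\tfrac34r,x+\tfrac34r]$, $|\psi'|\le C/r$ and $|\psi''|\le C/r^2$; let $A:=\mathrm{supp}\,\psi''\subset[x-\tfrac34r,x-\tfrac12r]\cup[x+\tfrac12r,x+\tfrac34r]$ and $E(s):=\int_\R\psi\,e_\eps(v_\eps(\cdot,s))$. Lemma~\ref{alpha} gives $E'(s)=-\int_\R\eps\,\psi\,|\partial_s v_\eps|^2+\int_\R\xi_\eps\,\psi''$. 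Two inputs are then combined. First, the dissipation is coercive: substituting $\partial_s v_\eps=\partial_{yy}v_\eps-\frac1{\eps^2}\nabla V(v_\eps)$ and integrating by parts, the cross term feeds $\frac{2\lambda_i^-}{\eps^2}\int\psi\,\frac\eps2|\partial_y v_\eps|^2$ (by \eqref{eq:conv}) while $\frac1{\eps^3}\int\psi|\nabla V(v_\eps)|^2$ controls $\frac1\eps\int\psi V(v_\eps)$ up to the factor $\lambda_i^+/\lambda_i^-$ (since $|\nabla V|\ge\frac{\lambda_i^-}{2}|w|$ and $V\le\lambda_i^+|w|^2$ near $\upsigma_i$); dropping $\eps\int\psi|\partial_{yy}v_\eps|^2\ge0$ and pushing the $\psi'$ cross term into an error over $A$ yields $\int\eps\psi|\partial_s v_\eps|^2\ge\frac{\lambda_i^-}{\eps^2}E(s)-\frac{C}{r^2}\frac{\lambda_i^+}{\lambda_i^-}\int_A e_\eps$. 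Second, $A$ sits at distance $\ge r/4$ inside $[x-r,x+r]$, on which (by the previous step) $v_\eps$ stays in $B(\upsigma_i,\upmu_0)$ for $s\in[t,t+\alpha_0^{-3}r^2]$; feeding the $f$-barriers --- seeded on $\{s=t\}$ by $\int_{[x-r,x+r]}|w(\cdot,t)|^2\le\frac{4\eps}{\lambda_i^-}M_0$ (as $\frac1\eps V\ge\frac{\lambda_i^-}{4\eps}|w|^2$) and laterally by $\upmu_0^2$ --- into an interior parabolic gradient estimate for the essentially linear equation satisfied by $w$ gives
\[
\int_A e_\eps(v_\eps(\cdot,s))\ \le\ C\,\frac{\lambda_i^+}{\lambda_i^-}\,M_0\,\exp\!\Bigl(-\frac{\lambda_i^-}{\eps^2}(s-t)\Bigr)\ +\ C\exp\!\Bigl(-\frac{\sqrt{\lambda_i^-}}{4}\,\frac r\eps\Bigr).
\]
Inserting both into the identity yields $E'(s)\le-\frac{\lambda_i^-}{\eps^2}E(s)+\frac{C}{r^2}\int_A e_\eps$, and Gronwall together with $E(t)\le M_0$ and $\int_A e_\eps\le M_0$ produces, for $s\in[t,t+\alpha_0^{-3}r^2]$, a bound of the shape of \eqref{eq:mouloud3}: the term $E(t)\,e^{-\lambda_i^-(s-t)/\eps^2}$ and the (reabsorbed) first term of the display give the exponential-in-time part, while the time integral of the $\frac{C}{r^2}\exp(-\sqrt{\lambda_i^-}\,r/(4\eps))$ contribution gives the $\frac{s-t}{r^2}\exp(-c\,r/\eps)$ part; since $\int_{x-r/2}^{x+r/2}e_\eps(v_\eps(\cdot,s))\le E(s)$, this is \eqref{eq:mouloud3}. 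The explicit constants $1+4\lambda_i^+/\lambda_i^-$, $2^{14}$ and $\sqrt{\lambda_i^-/2}/12$ are obtained by carrying the values of $\lambda_i^\pm$, $\upmu_0$ (small) and $\alpha_0$ (large) through these steps.

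\textbf{Main difficulty.} The delicate point is the coupling. The energy identity only ever sees a flux through $\mathrm{supp}\,\psi''$, so converting it into the factor $e^{-c\,r/\eps}$ requires that the energy already be exponentially small a fixed fraction of $r$ inside the clean interval --- which is exactly what the $|w|^2$ subsolution plus a parabolic gradient bound deliver, but that subsolution is valid only while $v_\eps$ has not left $B(\upsigma_i,\upmu_0)$, so the whole argument must be organised as a single continuation in time on a nested family of intervals, the bootstrap being closed by Lemma~\ref{clearingout}. The second subtlety is to get the decay rate exactly $\lambda_i^-/\eps^2$ (the Hessian's smallest eigenvalue) rather than a worse combination of $\lambda_i^\pm$: this forces one to read the coercivity of $\int\eps\psi|\partial_s v_\eps|^2$ off the energy identity directly --- noting that a boundary-layer stationary profile has zero dissipation yet nonzero localised energy, so coercivity holds only modulo an $O(e^{-c\,r/\eps})$ tail --- rather than through $L^\infty$ parabolic estimates, which would cost an extra factor $e^{\lambda_i^-}$.
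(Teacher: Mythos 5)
The central gap is in your proof of \eqref{eq:kloden}. Your barrier argument needs $|v_\eps-\upsigma_i|^2$ to be a subsolution on the whole (shrinking) cylinder, i.e.\ it needs $v_\eps$ to remain in $B(\upsigma_i,\upmu_0)$ there --- but for $s>t$ you have no control whatsoever on $v_\eps$ in $(x-r,x-r/2)\cup(x+r/2,x+r)$, nor on the lateral boundary $\{x\pm r\}\times(t,s^\ast]$: the hypothesis only forbids the front at time $t$, and a priori a front could sweep in from outside $[x-r,x+r]$ at an uncontrolled speed and sit inside your slanted cylinder long before it reaches $[x-r/2,x+r/2]$. So the assertion ``on the cylinder shrinking from $[x-r,x+r]$ to $[x-r/2,x+r/2]$ the solution stays in $B(\upsigma_i,\upmu_0)$'' is exactly what has to be proved, and the continuation does not close. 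Your parenthetical fallback (close the bootstrap with Lemma \ref{clearingout} applied to \eqref{eq:mouloud3}) also fails in the initial time layer: for $s-t\lesssim \eps^2$ the right-hand side of \eqref{eq:mouloud3} is of order $M_0(1+4\lambda_i^+/\lambda_i^-)\geq \eta_0$, so clearing-out gives nothing there, and \eqref{eq:mouloud3} itself presupposes the clean cylinder. The paper supplies the missing lateral control by a different mechanism (Lemma \ref{guingamp}): the semi-decreasing property of localized energy (Lemma \ref{semidec}) plus a pigeonhole choice of two points $x_i\in[x-r,x-\frac34 r]$, $x_f\in[x+\frac34 r,x+r]$ whose localized energy stays below $\eta_0$ during the whole time span, so that clearing-out keeps the two vertical segments $\{x_i\},\{x_f\}\times[t,t+\alpha_0^{-3}r^2]$ front-free; then, to avoid the circularity you are fighting, it applies the maximum principle not to $v_\eps$ but to the solution $\tilde v_\eps$ of the equation with a convexified potential $\tilde V$ agreeing with $V$ on $B(\upsigma_i,\upmu_0)$, with the same parabolic boundary data, and concludes $v_\eps=\tilde v_\eps$ by uniqueness (Proposition \ref{firstvel}).

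For \eqref{eq:mouloud3} your route (localized energy identity plus a Gronwall inequality) is genuinely different from the paper's, but as written it has a quantitative flaw: after substituting the equation into the dissipation, the potential part of $E(s)$ is only recovered through $\eps^{-3}\int\psi|\nabla V(v_\eps)|^2\geq \frac{(\lambda_i^-)^2}{4\lambda_i^+}\,\eps^{-2}\int\psi\,\frac{V(v_\eps)}{\eps}$, so the claimed coercivity $\int\eps\psi|\partial_s v_\eps|^2\geq \frac{\lambda_i^-}{\eps^2}E(s)-\dots$ fails whenever $\lambda_i^+>\lambda_i^-/4$ (always), and your Gronwall argument yields at best the rate $\min\bigl(\lambda_i^-,(\lambda_i^-)^2/4\lambda_i^+\bigr)/\eps^2$, not the stated $\lambda_i^-/\eps^2$; moreover your bound for $\int_A e_\eps$ is itself an instance of the estimate being proved and again rests on part (i). The paper (Proposition \ref{estimpar}) avoids both issues by applying Duhamel's formula and the comparison principle directly to $\varphi\,\partial_x v_\eps$ and to $\varphi\,(v_\eps-\upsigma_i)$, where the damping $\frac{1}{\eps^2}\nabla^2V(v_\eps)\geq\frac{\lambda_i^-}{2\eps^2}$ from \eqref{eq:conv} gives the $\exp(-\lambda_i^-(s-t)/\eps^2)$ factor cleanly, and the heat kernel evaluated at distance $\geq r/6$ produces the $\frac{s-t}{r^2}\exp(-c\,r/\eps)$ flux term; this is where the explicit constants in \eqref{eq:mouloud3} come from.
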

  
Finally, another important tool in our proofs in an elementary covering argument, which we introduced 
in an earlier work on the motion of Ginzburg-Landau vortices  and that  we will recall   in  subsection \ref{kappa}. Roughly speaking,  given a  small number $\kappa$, this covering results  states that given $\ell$ points in a metric space and a typical length $\delta$,  one may  find a length scale $\tilde \delta$ of the same order as $\delta$, such that all the points are included in balls of radius $\kappa \tilde \delta$,  and such that  mutual distances between the balls is at least $\kappa^{-1} \tilde \delta$ (the points are said to be $\kappa$-confined inside these balls).

The proof of Theorem 1bis
is then a combination of  applications of   Lemma \ref{alpha}, Proposition \ref{regularisation},  the clearing-out Lemma \ref{clearingout}, and   the $\kappa$-confinement Lemma.   It is  based on an iteration argument, which will be completed in a finite, bounded by above,  number  of steps.  The main parameter which is involved in the iteration process is $\delta$,  which appears in the $\kappa$- confinement technique,  and  which we let it shrink. More precisely, we set  $\delta_{n+1}=\delta_n \kappa^{\ell_0}$,  and each step on the iteration is defined thank to a suitable stopping time.

\numberwithin{theorem}{section} \numberwithin{lemma}{section}
\numberwithin{proposition}{section} \numberwithin{remark}{section}
\numberwithin{corollary}{section}
\numberwithin{equation}{section}

\section{Properties of the  front set}\label{sect:front}
We describe in this section some classical and mostly elementary properties of the front set. More precisely, we present the proofs of Lemma \ref{clearingout} and Corollary \ref{interface}.

 \medskip
\noindent
{\bf Proof of Lemma \ref{clearingout}.}
 Let $a\in I$ be such that  $[a, a+1]\subset I$. Since by assumption $\vert I \vert \geq 1$, the set of such points is non-empty. In view of the definition of the energy, $V(u)\leq e(u)$, and therefore if the map $u$ satisfies \eqref{riri} then 
$$\int_{[a, a+1]}V(u(x))dx  \leq \int_{I}V(u(x)) dx\leq \eta_0.$$
We deduce  from the mean-value theorem that there exists some point $s_0 \in [a, a+1]$ such that
\beq\label{eq:trou}V(u(s_0))\leq \eta_0.\eeq
We first choose the constant $\eta_0$ small enough so that $V(y)\leq \eta_0$ for $y\in \R^k$ implies that $y\in B(\upsigma_i,\upmu_0/2)$ for some $i\in \{1,\cdots,q\}.$  In view of \eqref{eq:trou}, there exists therefore $i\in \{1,\cdots,q\}$ such that 
 \begin{equation}
 \label{gavrochul}
  \vert u(s_0)-\upsigma_i\vert \leq \frac{\upmu_0}{2}.
  \end{equation}
  On the other hand, by integration we have for any $s\in [a, a+1]$
  \begin{equation}
  \label{grosgrouin}
  \vert u(s)-u(s_0)\vert \leq \int_{[s, s_0]} \vert \dot u \vert \leq   \big (\int_{[s, s_0]} \vert \dot u \vert^2\big)^\frac12 \leq (2\eta_0)^{\frac{1}{2}}.
  \end{equation}
We impose additionally that $(2\eta_0)^{\frac{1}{2}}\leq \frac{\upmu_0}{2}$, so that combining \eqref{gavrochul} and \eqref{grosgrouin} we obtain, for any $s\in [a, a+1]$,
 \begin{equation}
 \label{outofkill}
\vert u(s)-\upsigma_i\vert \leq\upmu_0.
\end{equation}
 Since this relation holds for any $a\in I$ such that  $ [a, a+1]\subset I$, inequality \eqref{outofkill} actually holds for any $s \in I$, and the proof is complete.
 \qed
 
 \bigskip
\noindent
{\bf Proof of  Corollary \ref{interface}.}  We consider the covering of $\R$ given by $\R=\underset{ n \in \Z}\cup I_n$, where $I_n$ denotes the interval $[n, n+1]$. Consider the subset $\mathcal I$ of $\Z$ defined by
  $$\mathcal I=\{n \in \Z ,  \  \mathcal D(u)\cap I_n \neq\emptyset \}.$$
  In view of Lemma \ref{clearingout} we have $\int_{I_n} e(u)\ge \eta_0$ for any $n\in\mathcal I$, so that we deduce
  $$M_0\geq \int_{\R}e(u) \geq \sum_{n \in \mathcal I} \int_{I_n}e(u)\geq (\sharp \mathcal I ) \eta_0,$$
   so that $\sharp \mathcal I\leq \frac{M_0}{\eta_0}$. 
  For $n \in \mathcal I$  choose $x_n\in \mathcal D(u) \cap I_n $, so that $I_n\subset [x_n-1,x_n+1]$. We have 
  $$\mathcal D(u)\subset\bigcup_{n\in\mathcal I}I_n\subset\bigcup_{n\in\mathcal I}[x_n-1,x_n+1]\, .$$
  \qed

\section{A first upper bound for the velocity of the front set}
\label{upper}

In this section we provide a first estimate  concerning the speed of the front set. It relies on the clearing-out lemma as well as on some maximum principle for the quantity $|v_\eps - \sigma_i|^2.$ The main result of this section is the following  
 
\begin{proposition} 
\label{firstvel} 
Assume that $v_\eps$ is a solution to \eqref{glepsilon} verifying assumption $(H_0^\eps)$. Let $t\geq 0$, $x\in \R$ and $r\geq \alpha_0 \eps$ be such that 
\begin{equation}
\mathcal D(t) \cap [x-r,x+r] = \emptyset.
\end{equation}
Then, 
\begin{equation}\label{eq:klodenbis}
\mathcal D(s) \cap [x-\frac34 r,x+\frac34 r] = \emptyset
\end{equation}
for every $t\leq s \leq t+\alpha_0^{-3}r^2.$
\end{proposition}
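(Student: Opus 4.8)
The plan is to control the pointwise size of $w_\eps \equiv |v_\eps - \sigma_i|^2$ on the relevant space-time region by a maximum-principle argument, and then to feed this back into the clearing-out Lemma \ref{clearingout} to exclude the front set from the slightly smaller interval $[x-\frac34 r, x+\frac34 r]$ for the full time span $t\le s\le t+\alpha_0^{-3}r^2$. First, on $[x-r,x+r]\times\{t\}$ we have $\mathrm{dist}(v_\eps(\cdot,t),\Sigma)<\upmu_0$; by a continuity/connectedness argument (the interval is connected and the balls $B(\sigma_j,\upmu_0)$ are mutually disjoint) a single well $\sigma_i$ is selected, so $|v_\eps(y,t)-\sigma_i|<\upmu_0$ for all $y\in[x-r,x+r]$. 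The quantity $w_\eps$ satisfies, using \eqref{glepsilon},
\begin{equation}
\partial_t w_\eps - \partial_{xx} w_\eps = -2\eps|\partial_x v_\eps|^2 - \frac{2}{\eps^2}(v_\eps-\sigma_i)\cdot\nabla V(v_\eps) \leq -\frac{2}{\eps^2}(v_\eps-\sigma_i)\cdot\nabla V(v_\eps),
\end{equation}
and on the region where $|v_\eps-\sigma_i|\le\upmu_0$ the convexity bound \eqref{eq:conv} gives $(v_\eps-\sigma_i)\cdot\nabla V(v_\eps)\ge \tfrac12\lambda_i^-\,|v_\eps-\sigma_i|^2 = \tfrac12\lambda_i^- w_\eps$, so that $w_\eps$ is a subsolution of the linear heat equation with absorption $\partial_t w - \partial_{xx} w + \frac{\lambda_i^-}{\eps^2} w \le 0$ as long as $w_\eps\le\upmu_0^2$.

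The main step is a local barrier construction: on the parabolic cylinder $[x-r,x+r]\times[t,t+\alpha_0^{-3}r^2]$ I would build a supersolution $\bar w$ of $\partial_t \bar w - \partial_{xx}\bar w + \frac{\lambda_i^-}{\eps^2}\bar w \ge 0$ that dominates $w_\eps$ on the parabolic boundary, namely $\bar w \ge \upmu_0^2$ on the lateral boundary $\{x\pm r\}\times[t,\ldots]$ (which holds trivially since $w_\eps<\upmu_0^2$ there — actually one only needs $\bar w$ to exceed the a priori bound on $w_\eps$, and $w_\eps$ stays bounded by a universal constant from $(H_0^\eps)$ via a crude $L^\infty$ estimate, or one simply uses that $w_\eps\le\upmu_0^2$ is the property being propagated), and $\bar w \ge w_\eps(\cdot,t)$ on the bottom. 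A convenient choice is a function of the form $\bar w(y,s) = \upmu_0^2\big(\cosh(\theta(y-x)/r)/\cosh(\theta)\big)$ with $\theta$ chosen so that the spatial part already makes it a supersolution of the absorption equation — one needs $\theta^2/r^2 \le \lambda_i^-/\eps^2$, i.e. $\theta \le \sqrt{\lambda_i^-}\, r/\eps$, which is compatible with $\theta$ being a large fixed constant precisely because $r\ge\alpha_0\eps$ and $\alpha_0$ is large. With such $\theta$, on the middle half $[x-\frac34 r,x+\frac34 r]$ one gets $\bar w \le \upmu_0^2 \cosh(\tfrac34\theta)/\cosh(\theta) < \upmu_0^2$, and by choosing $\alpha_0$ large enough (hence $\theta$ large enough) the ratio $\cosh(\tfrac34\theta)/\cosh(\theta)$ can be pushed below any prescribed fraction, in particular below a value that keeps us safely under $\upmu_0^2$; a continuity-in-time (bootstrap) argument then confirms that $w_\eps$ never reaches $\upmu_0^2$ on $[x-r,x+r]$ so the differential inequality stays valid throughout, and the comparison principle yields $w_\eps\le\bar w$ on the whole cylinder.

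Finally, on $[x-\frac34 r,x+\frac34 r]\times\{s\}$ we conclude $|v_\eps(y,s)-\sigma_i| \le \big(\cosh(\tfrac34\theta)/\cosh(\theta)\big)^{1/2}\upmu_0 < \upmu_0$, hence this interval does not meet $\mathcal D(s)$, which is exactly \eqref{eq:klodenbis}. The main obstacle I anticipate is making the barrier argument clean: one must (i) justify the bootstrap that keeps $w_\eps\le\upmu_0^2$ so the linearization via \eqref{eq:conv} is legitimate, and (ii) track the constants so that the single largeness condition $\alpha_0 = 32 M_0/\eta_0$ (equivalently $r/\eps\ge\alpha_0$) suffices to make $\theta$ large enough; this is where the specific numerical value of $\alpha_0$ enters, and some care with the $\cosh$ ratio and with the time length $\alpha_0^{-3}r^2$ (which plays essentially no role here beyond being finite, since the absorption term already does the work uniformly in $s$) is needed. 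Note this proposition is weaker than Proposition \ref{regularisation}, which additionally quantifies energy decay; here only the qualitative exclusion of the front set is claimed, so the soft comparison argument above should be enough.
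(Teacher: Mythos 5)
There is a genuine gap, and it sits exactly where your proposal says the step is ``trivial'': the control of $w_\eps=|v_\eps-\sigma_i|^2$ on the \emph{lateral} boundary $\{x\pm r\}\times[t,t+\alpha_0^{-3}r^2]$. The hypothesis only gives $\mathcal D(t)\cap[x-r,x+r]=\emptyset$, i.e.\ information at the initial time; nothing is assumed about $v_\eps$ outside $[x-r,x+r]$, so a front located just outside the interval at time $t$ may move inward, and for $s>t$ there is no a priori reason why $w_\eps(x\pm r,s)<\upmu_0^2$. Your barrier equals $\upmu_0^2$ precisely on the lateral boundary, so the comparison cannot even get started there, and the bootstrap that is supposed to keep \eqref{eq:conv} applicable cannot be closed at $y=x\pm r$: once the front reaches the endpoint, the absorption inequality fails in a neighborhood of it and the argument stalls. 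The remark that the time length $\alpha_0^{-3}r^2$ ``plays essentially no role'' is the telltale sign: without a time restriction the statement is false (fronts do move, only slowly, and after long enough times an exterior front can cross $[x-\frac34 r,x+\frac34 r]$), so any argument insensitive to the time bound proves too much. In the paper, the lateral control is exactly where all the work goes: by pigeonhole one finds points $x_i\in[x-r,x-\frac34 r]$ and $x_f\in[x+\frac34 r,x+r]$ carrying initial energy at most $\eta_0/2$ on a surrounding interval, the semi-decreasing property of the localized energy (Lemma \ref{semidec}, with cut-off gradient of order $\alpha_0/r$) shows this energy stays below $\eta_0$ precisely up to times of order $\alpha_0^{-2}\eta_0 r^2/M_0\geq\alpha_0^{-3}r^2$, and the clearing-out Lemma \ref{clearingout} then keeps the two segments $\{x_i\},\{x_f\}\times[t,t+\alpha_0^{-3}r^2]$ inside $B(\sigma_i,\upmu_0)$ (Lemma \ref{guingamp}); this is also the only place where the specific value $\alpha_0=32M_0/\eta_0$ enters. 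With that lateral control in hand, the interior propagation is done in the paper by a softer device than your absorption barrier: one replaces $V$ by a globally convex-near-$\sigma_i$ potential $\tilde V$, notes that $|\tilde v_\eps-\sigma_i|^2$ is a subsolution of the heat equation on the cylinder $[x_i,x_f]\times[t,t+\alpha_0^{-3}r^2]$, applies the maximum principle, and identifies $\tilde v_\eps$ with $v_\eps$ by uniqueness — thereby avoiding your bootstrap altogether.

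Two secondary problems in the interior part as written. First, your barrier $\bar w=\upmu_0^2\cosh(\theta(y-x)/r)/\cosh\theta$ does not dominate the initial data: at time $t$ one only knows $w_\eps<\upmu_0^2$ on $[x-r,x+r]$, and $\bar w$ is much smaller than $\upmu_0^2$ in the middle of the interval, so the bottom comparison fails; you would need to add a time-decaying term such as $\upmu_0^2\exp(-\lambda_i^-(s-t)/\eps^2)$ (this is essentially what Proposition \ref{estimpar} does at the level of energies, with an initial boundary layer in time). Second, even granting the absorption inequality, its validity depends on the very containment you are trying to prove, so the continuity-in-time argument must be formulated on the region bounded by the two controlled lateral segments, not on $[x-r,x+r]$; once you import the paper's Lemma \ref{guingamp} (or prove an equivalent energy-flux estimate), your barrier argument — or, more simply, the paper's modified-potential maximum principle — goes through.
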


The starting point is a straightforward consequence  of formula \eqref{eq:loca1}, often termed 
semi-decreasing property.

\begin{lemma}
\label{semidec}
Let $\chi$ be a smooth function with compact support in $\R,$ then
\begin{equation}\label{eq:vivachup}
\frac{1}{2}\int_{\R\times\{t\}}\eps |\partial_t v_\eps|^2\chi^2 + \frac{d}{dt} \int_{\R\times\{t\}} e_\eps(v_\eps) \chi^2 \leq 4 \|\dot  \chi\|_{L^\infty}^2 \int_{{\rm supp} \chi}e_\eps(v_\eps). 
\end{equation}
In particular,
\begin{equation}
\label{semidecroissance}
\frac{d}{dt}\int_{\R\times\{t\}} e_\eps(v_\eps) \chi^2(x)\,dx \leq 4\,
\|\dot \chi\|_{L^\infty}^2 \, \mathcal{E}_\eps(v_\eps^0) \leq 4M_0\|\dot \chi\|_{L^\infty}^2,
\end{equation}
so that for $0\leq t \leq t+\Delta t$
\begin{equation}
\label{semidecroissance2}
\int_{\R} e_\eps(v_\eps(x,t+\Delta t)) \chi^2(x)\,dx \leq \int_{\R} e_\eps(v_\eps(x,t)) \chi^2(x)\,dx + 4M_0\|\dot \chi\|_{L^\infty}^2\Delta t,
\end{equation}
whereas in the opposite direction
\begin{equation}\label{semidecroissance3}
\begin{split}
	\int_{\R} e_\eps(v_\eps(x,t+\Delta t)) \chi^2(x)\,dx \geq &\int_{\R} e_\eps(v_\eps(x,t)) \chi^2(x)\,dx\\ &- (\int_t^{t+\Delta t}\hspace{-5pt}\int_{\R}\eps |\partial_t v_\eps|^2\chi^2) - 4M_0\|\dot \chi\|_{L^\infty}^2\Delta t.
\end{split}
\end{equation}
\end{lemma}

This property yields

\begin{lemma} 
\label{guingamp}
Under the assumptions of Proposition \ref{firstvel}, there exist 
$$
x_i\in [x-r,x-\frac{3}{4}r] \qquad\text{and}\qquad x_f\in [x+\frac34 r,x+r]
$$
such that 
$$
\{x_i,x_f\} \cap \mathcal D (s) = \emptyset
$$
for every $t\leq s\leq t+\alpha_0^{-3}r^2.$ 
\end{lemma}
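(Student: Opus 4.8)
The plan is to extract the two good endpoints $x_i$ and $x_f$ from the hypothesis $\mathcal D(t)\cap[x-r,x+r]=\emptyset$ by combining Lemma~\ref{clearingout} (in its $\eps$-rescaled form) with the semi-decreasing property \eqref{semidecroissance2} of Lemma~\ref{semidec}, using cutoff functions localized on subintervals of $[x-r,x-\tfrac34 r]$ and $[x+\tfrac34 r,x+r]$ respectively. By symmetry it suffices to produce $x_i$; the argument for $x_f$ is identical.

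First I would observe that since $r\geq\alpha_0\eps$ and $\mathcal D(t)$ is covered, up to rescaling, by at most $M_0/\eta_0$ intervals of length $2\eps$ (Corollary~\ref{interface} applied to $v_\eps(\cdot,t)$ via the rescaling \eqref{rescaling}), the set $[x-r,x-\tfrac34 r]$ has length $\tfrac14 r$ and contains a subinterval $J$ of length comparable to $\eps$ — indeed we can afford a subinterval of length, say, $\tfrac{r}{8\,M_0/\eta_0}\geq \tfrac{\alpha_0\eps}{8\,M_0/\eta_0}$, which by the definition $\alpha_0=32M_0/\eta_0$ is at least $4\eps$ — on which, at time $t$, the energy is below a fixed small threshold. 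Concretely, partition $[x-r,x-\tfrac34 r]$ into roughly $M_0/\eta_0$ equal subintervals of length $\geq 4\eps$; since $\mathcal E_\eps(v_\eps(\cdot,t))\leq M_0$, at least one such subinterval $J$ carries energy $\int_J e_\eps(v_\eps(\cdot,t))\leq \eta_0$ (otherwise the total exceeds $M_0$). The choice of $\alpha_0$ is exactly what makes this counting work.

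Next I would use the semi-decreasing estimate to propagate this smallness forward in time on a slightly smaller interval. Choosing a smooth cutoff $\chi$ equal to $1$ on the middle third $J'$ of $J$, supported in $J$, with $\|\dot\chi\|_{L^\infty}\lesssim \eps^{-1}$ (since $|J|\gtrsim\eps$), inequality \eqref{semidecroissance2} in its $\eps$-rescaled form gives
\[
\int_{J'} e_\eps(v_\eps(\cdot,s))\,dx \leq \int_{\R}\chi^2 e_\eps(v_\eps(\cdot,s))\,dx \leq \eta_0 + 4M_0\|\dot\chi\|_{L^\infty}^2(s-t) \lesssim \eta_0 + M_0\,\eps^{-2}\alpha_0^{-3}r^2
\]
for $t\leq s\leq t+\alpha_0^{-3}r^2$. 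Here one must be a little careful: the naive bound $\eps^{-2}\alpha_0^{-3}r^2$ is \emph{not} small when $r\gg\eps$. The correct move is to take $|J|$ of order $r/(M_0/\eta_0)$ rather than of order $\eps$, so that $\|\dot\chi\|_{L^\infty}^2\lesssim (M_0/\eta_0)^2 r^{-2}$, giving an added term of order $M_0(M_0/\eta_0)^2\alpha_0^{-3}$, which the definition $\alpha_0=32M_0/\eta_0$ renders as small as we wish (a pure numerical constant times $\eta_0$). Then the rescaled clearing-out Lemma~\ref{clearingout}, applied on $J'$ (whose length exceeds $\eps$), forces $\mathcal D(s)\cap J'=\emptyset$; picking any $x_i\in J'$ yields a point with $x_i\notin\mathcal D(s)$ for all $s\in[t,t+\alpha_0^{-3}r^2]$, and by construction $x_i\in[x-r,x-\tfrac34 r]$.

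The main obstacle, and the point requiring genuine care rather than routine estimation, is the bookkeeping of constants: one needs $J$ long enough (order $r$, not order $\eps$) so that the error term $4M_0\|\dot\chi\|_{L^\infty}^2\Delta t$ stays below a fraction of $\eta_0$ over the \emph{whole} time window $\Delta t=\alpha_0^{-3}r^2$, yet $J$ must sit inside $[x-r,x-\tfrac34 r]$, which only has length $\tfrac14 r$ — so the partition into $\sim M_0/\eta_0$ pieces is tight, and this is precisely why $\alpha_0$ is chosen as $32 M_0/\eta_0$. Once the constants are tracked, the conclusion is immediate and Lemma~\ref{guingamp} follows; Proposition~\ref{firstvel} will then follow by using $x_i,x_f$ as the endpoints of a good interval on which to run the maximum principle for $|v_\eps-\sigma_i|^2$.
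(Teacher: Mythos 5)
Your argument is the same as the paper's: pigeonhole a low-energy subinterval of length of order $r/\alpha_0$ inside $[x-r,x-\frac34 r]$, propagate the smallness over the time window $\alpha_0^{-3}r^2$ via the semi-decreasing inequality \eqref{semidecroissance2} with a cutoff of slope of order $\alpha_0/r$, and conclude with the rescaled clearing-out Lemma \ref{clearingout}; your second-guessing about $|J|\sim\eps$ versus $|J|\sim r/(M_0/\eta_0)$ resolves itself exactly as in the paper, where the cutoff is supported on an interval of length $\sim r/\alpha_0$ and satisfies $|\chi'|\leq \alpha_0 r^{-1}$.

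There is, however, one concrete slip in the bookkeeping you yourself single out as the delicate point: you pigeonhole with threshold $\eta_0$, i.e.\ $\int_J e_\eps(v_\eps(\cdot,t))\leq \eta_0$, and then add the propagation error $4M_0\|\dot\chi\|_{L^\infty}^2\,\Delta t$, which is a fixed \emph{positive} multiple of $\eta_0$ (about $\eta_0/8192$ with your constants — not ``as small as we wish'', since $\alpha_0$ is fixed). So at later times you only get $\int_{J'} e_\eps(v_\eps(\cdot,s))\leq \eta_0+c\,\eta_0$ with $c>0$, and Lemma \ref{clearingout}, whose hypothesis is $\int\leq\eta_0$, cannot be invoked as written. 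The fix is exactly what the paper does: pigeonhole with threshold $\eta_0/2$, which the counting permits because $[x-r,x-\frac34 r]$ has length $r/4$ and therefore contains $\alpha_0/16=2M_0/\eta_0$ disjoint subintervals of length $4r/\alpha_0\geq 4\eps$; if each carried energy more than $\eta_0/2$ the total would exceed $M_0$. With the initial bound $\eta_0/2$ and the error term bounded by a small fraction of $\eta_0$ (the paper gets $\eta_0/8$), the clearing-out hypothesis holds for all $t\leq s\leq t+\alpha_0^{-3}r^2$ and the rest of your argument, including the symmetric construction of $x_f$, goes through unchanged.
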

In other words, the two time segments $\{x_i\}\times [t,t+\alpha_0^{-3}r^2]$ and $\{x_f\}\times [t,t+\alpha_0^{-3}r^2]$ are disjoint from the space-time front set of $v_\eps.$ 
 
\begin{proof}
By the pigeonhole principle there exists  
$
x_i\in [x-r,x-\frac{3}{4}r] 
$
such that 
$$
\int_{x_i-\frac{2r}{\alpha_0}}^{x_i+\frac{2r}{\alpha_0}} e_\eps(v_\eps(y,t))\, dy \leq \frac{\eta_0}{2}.
$$
Indeed, the total energy of $v_\eps$ is bounded by $M_0$ and an interval of length $r/4$ is $\alpha_0/16$ times larger than an interval of length $4r/\alpha_0.$ 

Let $\chi$ be a smooth non-negative cut-off function such that $\chi\equiv 1$ on $[x_i-\eps,x_i+\eps]$, $\chi\equiv 0$ outside $[x-2r/\alpha_0,x_i+2r/\alpha_0]$, and $|\chi'| \leq \alpha_0 r^{-1}.$ Applying \eqref{semidecroissance2} we obtain
$$
\int_{x_i-\eps}^{x_i+\eps} e_\eps(v_\eps(y,s))\, dy \leq \frac{\eta_0}{2} + 4M_0 \alpha_0^{2} r^{-2}(s-t)\leq \eta_0
$$
provided
$$
s\leq t + \tfrac{\eta_0}{8M_0} \alpha_0^{-2} r^{2} \leq t + \alpha_0^{-3} r^2.
$$
One obtains similarly $x_f$ and the conclusion follows from Lemma \ref{clearingout}.
\end{proof}
 
By continuity, it follows from previous lemma that under the assumptions of Proposition \ref{firstvel} there exists $\sigma_i\in \Sigma$ such that $v_\eps$ takes values in $B(\sigma_i,\upmu_0)$ 
on the parabolic boundary 
$$
\partial_P \Lambda = [x_i,x_f]\times \{t\} \bigcup \{x_i\}\times [t,t+\alpha_0^{-3}r^2] \bigcup \{x_f\}\times  [t,t+\alpha_0^{-3}r^2]
$$
of the cylinder
$$
\Lambda = [x_i,x_f]\times [t,t+\alpha_0^{-3}r^2].
$$

We are now in position the complete the

\medskip

\noindent
{\bf Proof of Proposition \ref{firstvel}.} 
	Let $\tilde V$ be a potential which coincides with $V$ on $B(\sigma_i,\upmu_0)$ and such that the equivalent of \eqref{eq:conv} for $\tilde V$ holds everywhere on $\R^k.$ Consider the unique solution $\tilde v_\eps$ of 
	\begin{equation}
	\label{glepsilontilde}
 {\partial_t \tilde v_\eps}- \partial_{xx}{\tilde v_\eps}=-\frac{1}{\eps^2}\nabla \tilde V(\tilde v_\eps)
  \end{equation}
on $\Lambda$, which coincides with $v_\eps$ on $\partial_P \Lambda.$ By scalar multiplication of \eqref{glepsilontilde} with $\tilde v_\eps - \sigma_i$ and taking into account \eqref{eq:conv}, we are led to
  $$
  \partial_t |\tilde v_\eps-\sigma_i|^2 - \partial_{xx} |\tilde v_\eps-\sigma_i|^2 \leq - 2 |\partial_x (\tilde v_\eps-\sigma_i)|^2 \leq 0 
  $$
  on $\Lambda.$  It follows from the maximum principle that $|\tilde v_\eps-\sigma_i|^2$ achieves its maximal value on $\partial_P \Lambda$, and hence that $\tilde v_\eps \in B(\sigma_i,\upmu_0)$ on the whole $\Lambda.$ Since $V=\tilde V$ on $B(\sigma_i,\upmu_0)$, we deduce that $v_\eps = \tilde v_\eps$ on $\Lambda$, which is the desired conclusion. 
\qed

\section{Estimates off the front set}
\label{off}

In the previous section, we have proved Proposition \ref{firstvel} which corresponds to the first part of the claim (namely \eqref{eq:kloden}) in Proposition \ref{regularisation}. The purpose of this section is to present a result which will yield the second part of the claim, namely \eqref{eq:mouloud3}.

\begin{proposition}\label{estimpar}
	Let $v_\eps$ be a solution to \eqref{glepsilon} verifying assumption $(H_0)$.  Let $x\in \R$, $t\geq0$, $r>0$ and $s>t$ such that 
	$$
	v_\eps(y,\tau) \in B(\sigma_i,\upmu_0) \quad \text{for all }\ (y,\tau) \in [x-3r/4,x+3r/4]\times[t,s].
	$$
	Then 
	$$
	\int_{x-r/2}^{x+r/2} e_\eps(v_\eps(y,s))\, dy \leq M_0 (1+4\frac{\lambda_i^+}{\lambda_i^-}) \left[ \exp(-\frac{\lambda_i^-}{\eps^2}(s-t) + 2^{14}\frac{s-t}{r^2} \exp(-\sqrt{\frac{\lambda_i^-}{2}}\frac{1}{12}\frac{r}{\eps}) \right]. 
	$$
\end{proposition}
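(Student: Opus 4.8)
The plan is to exploit the fact that on the space-time region where $v_\eps$ stays in the convex well $B(\sigma_i,\upmu_0)$, the quantity $w_\eps \equiv |v_\eps - \sigma_i|^2$ is a subsolution of a linear heat-type equation, so that both $\eps\|\partial_x v_\eps\|^2$ and $\eps^{-1}V(v_\eps)$ can be controlled by it. Concretely, scalar multiplication of \eqref{glepsilon} by $v_\eps - \sigma_i$ together with \eqref{eq:conv} gives, on $[x-3r/4,x+3r/4]\times[t,s]$,
\begin{equation}
\label{eq:sub}
\partial_t w_\eps - \partial_{xx} w_\eps + \frac{\lambda_i^-}{\eps^2} w_\eps \leq -2|\partial_x v_\eps|^2 \leq 0,
\end{equation}
while $V(v_\eps) \leq \lambda_i^+ w_\eps$ and $2V(v_\eps)\geq \tfrac12\lambda_i^- w_\eps$ on the same region by Taylor expansion at $\sigma_i$. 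First I would use a standard parabolic energy estimate: multiplying \eqref{eq:sub} by $w_\eps \chi^2$ for a cutoff $\chi$ supported in $[x-3r/4,x+3r/4]$, equal to $1$ on $[x-r/2,x+r/2]$ with $|\chi'|\lesssim r^{-1}$, and integrating, one gets a differential inequality for $\int w_\eps \chi^2\,dx$ of Gronwall type with exponential damping $e^{-\lambda_i^- (s-t)/\eps^2}$ and a source term coming from the $\chi'$ commutator, which lives only in the annulus $[x-3r/4,x-r/2]\cup[x+r/2,x+3r/4]$.

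The second, and more delicate, ingredient is to estimate that commutator source term, which involves the energy of $v_\eps$ on the annulus at intermediate times. Here the key is that on the annulus the solution is also trapped in the well, so one can iterate or localize the same estimate, but crucially one needs the \emph{spatial} decay factor $\exp(-\sqrt{\lambda_i^-/2}\,\tfrac{1}{12}\tfrac{r}{\eps})$ that appears in the statement. I would obtain this by a comparison/barrier argument for \eqref{eq:sub}: since $w_\eps$ solves a heat equation with a strong zeroth-order term $\lambda_i^-/\eps^2$, its fundamental solution decays like $\exp(-\sqrt{\lambda_i^-}\,|x-y|/\eps)$ away from the support of the initial data, so the value of $w_\eps$ (hence of $e_\eps(v_\eps)$ after combining the gradient bound from the $-2|\partial_x v_\eps|^2$ term) at the center, coming from boundary data of size $O(M_0/\eps)$ at distance $\gtrsim r$, is exponentially small in $r/\eps$. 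The factors $\tfrac{1}{12}$ and $\tfrac{1}{\sqrt2}$ in the exponent are artifacts of not using the sharp rate $\sqrt{\lambda_i^-}$: they leave room for the cutoff geometry (three nested scales $r/2 < 3r/4$ split into a few sub-annuli) and for absorbing the polynomial-in-$r/\eps$ prefactors from the heat kernel into the exponential. The term $2^{14}\tfrac{s-t}{r^2}$ is exactly the accumulated contribution of the flux through the annulus over the whole time interval $[t,s]$, each instant contributing $O(r^{-2})$ times the exponentially small spatial tail, integrated in $\tau$.

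To control $e_\eps(v_\eps)$ itself — not just $w_\eps$ — I would recover the gradient part from the good term $-2|\partial_x v_\eps|^2$ on the left of \eqref{eq:sub} by integrating \eqref{eq:sub} in space against $\chi^2$ and in time, which produces $\int_t^s\int |\partial_x v_\eps|^2\chi^2$ bounded by the same right-hand side; combined with $V(v_\eps)\leq \lambda_i^+ w_\eps$ this gives a bound on $\int_t^s\int e_\eps(v_\eps)\chi^2$, and then a further mean-value-in-time plus the semi-decreasing property \eqref{semidecroissance2}–\eqref{semidecroissance3} of Lemma \ref{semidec} upgrades the time-averaged bound to a pointwise-in-time bound at the final time $s$, paying only an extra $O(1)$ multiple (this is where the factor $1+4\lambda_i^+/\lambda_i^-$ gets its shape). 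I expect the main obstacle to be bookkeeping the constants through the nested cutoffs so that the stated exponent $\sqrt{\lambda_i^-/2}\cdot\tfrac{1}{12}\cdot\tfrac{r}{\eps}$ genuinely comes out — i.e. making the barrier/Gronwall argument quantitative enough that all polynomial prefactors in $r/\eps$ are swallowed — rather than any conceptual difficulty; the structure is entirely dictated by \eqref{eq:sub}.
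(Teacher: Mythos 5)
Your subsolution inequality for $|v_\eps-\upsigma_i|^2$ is correct, and your barrier/Duhamel idea does deliver the potential part of the energy at time $s$: it is essentially the paper's own computation for $g_\eps=\varphi\,(v_\eps-\upsigma_i)$ (cutoff, comparison principle, Duhamel with the damped kernel $e^{-\lambda_i^-\tau/\eps^2}G_\tau$, and $d(A,B)\geq r/6$ to convert the kernel tail into $e^{-c\,r/\eps}$), and it is indeed where the factor $4\lambda_i^+/\lambda_i^-$ comes from. The genuine gap is in how you recover the \emph{gradient} part of $e_\eps(v_\eps(\cdot,s))$ at the final time. Integrating your inequality against $\chi^2$ gives $\int_t^s\!\int|\partial_xv_\eps|^2\chi^2$ bounded by $\int|v_\eps-\upsigma_i|^2\chi^2(t)$ plus the commutator term $\int_t^s\!\int|v_\eps-\upsigma_i|^2\,|(\chi^2)''|$, and the latter lives on the annulus where $|v_\eps-\upsigma_i|^2$ is only controlled in $L^1$ by $\tfrac{4}{\lambda_i^-}\eps M_0$ through $(H_0)$: no exponential-in-$r/\eps$ smallness is available there. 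Likewise, the semi-decreasing property \eqref{semidecroissance2} that you invoke to pass from a mean-value time $\tau^*$ to $s$ costs $4M_0\|\chi'\|_\infty^2(s-\tau^*)\sim M_0\,\Delta t/r^2$, again with no exponential factor. The best your scheme can produce for the gradient term is therefore an additive error of order $M_0\,\eps^2/r^2$ (or $M_0\,\eps\,\tfrac{s-t}{r^2}$), which does \emph{not} imply the stated estimate: in the regime $\eps^2\log(r/\eps)\ll s-t\ll \eps^2\exp(c\,r/\eps)$ both terms of the claimed right-hand side, $e^{-\lambda_i^-(s-t)/\eps^2}$ and $\tfrac{s-t}{r^2}e^{-c\,r/\eps}$, are far smaller than $\eps^2/r^2$ — and this is precisely the regime in which the proposition is used later (the exit times in Section \ref{concentrate} are exponentially long in $\rho/\eps$), so the exponential factor cannot be sacrificed.

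The repair is not to extract the gradient from the good term $-2|\partial_xv_\eps|^2$, but to run your comparison/Duhamel argument directly on the gradient, which is what the paper does. Since $w_\eps=\partial_xv_\eps$ solves the linearized equation \eqref{eq:wiz} and $\nabla^2V(v_\eps)\geq\tfrac{\lambda_i^-}{2}{\rm Id}$ on the well by \eqref{eq:conv}, the localized quantity $|f_\eps|^2$ with $f_\eps=\varphi\,\partial_xv_\eps$ satisfies exactly the same type of differential inequality as $|v_\eps-\upsigma_i|^2$, with a source $\lesssim r^{-2}|\partial_xv_\eps|^2$ supported in the annulus $A$. Duhamel then gives a pointwise-in-time bound at $s$: the initial data term is damped by $e^{-\lambda_i^-(s-t)/\eps^2}$ and has $L^2$ norm $\|f_\eps(\cdot,t)\|_2^2\leq 2M_0/\eps$ by $(H_0)$, while the annular source, after the elementary inequality $\tfrac{\lambda_i^-}{\eps^2}(s-\tau)+\tfrac{r^2}{288(s-\tau)}\geq\sqrt{\lambda_i^-/2}\,\tfrac{r}{6\eps}$, contributes the $\tfrac{s-t}{r^2}\exp(-\sqrt{\lambda_i^-/2}\,\tfrac{r}{6\eps})$ term. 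Keep your treatment of the potential part, replace your gradient-recovery step by this parallel estimate, and sum the two.
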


\begin{proof}
	Let $0\leq\varphi\leq 1$ be a smooth cut-off function such that\footnote{The constants hereafter were computed for a sine transition function.} $\varphi\equiv 0$ outside $[x-3r/4,x+3r/4]$, $\varphi\equiv 1$ on $[x-2r/3,x+2r/3]$, $|\varphi'|\leq 6\pi/r$ and $|\varphi''|\leq 72\pi^2/r^2.$  Let $w_\eps=\partial_x v_\eps,$ so that $w_\eps$ satisfies the equation
	\begin{equation}\label{eq:wiz}
		\partial_t w_\eps - \partial_{xx}w_\eps = -\frac{1}{\eps^2}\nabla^2V(v_\eps)w_\eps.
	\end{equation}
	We set\footnote{Notice that $f_\eps$ is implicitly defined on the whole $\R\times [t,s]$.} $f_\eps = \varphi w_\eps$, so that
\begin{equation}\label{eq:wiz2}
		\partial_t f_\eps - \partial_{xx}f_\eps + \frac{1}{\eps^2}\nabla^2V(v_\eps)f_\eps = -2\varphi'\partial_x w_\eps - \varphi'' w_\eps
	\end{equation}
	on $\R \times [t,s].$ By scalar multiplication with $2f_\eps$ we are led to
\begin{equation}\label{eq:wiz3}
		\partial_t |f_\eps|^2 - \partial_{xx}|f_\eps|^2 + \frac{2}{\eps^2}\nabla^2V(v_\eps)f_\eps \cdot f_\eps = -2|\partial_x f_\eps|^2 - 4\varphi \varphi' w_\eps \partial_x w_\eps  - 2\varphi''\varphi | w_\eps|^2.
	\end{equation}
Since $\varphi \partial_xw_\eps = -\varphi'w_\eps + \partial_x f_\eps,$ we have
$$
\varphi^2 |\partial_xw_\eps|^2 \leq 2 |\partial_xf_\eps|^2 + 2(\varphi')^2|w_\eps|^2,
$$
and since
$$
-4\varphi\varphi' w_\eps \partial_x w_\eps \leq \varphi^2|\partial_x w_\eps|^2 + 4(\varphi')^2|w_\eps|^2,
$$
we obtain
$$
\partial_t |f_\eps|^2 - \partial_{xx}|f_\eps|^2 + \frac{2}{\eps^2}\nabla^2V(v_\eps)f_\eps \cdot f_\eps \leq \left( 6(\varphi')^2 + 2|\varphi''|\varphi\right) |w_\eps|^2.
$$
In view of the pointwise bounds on $\varphi$ and its derivatives, as well as of \eqref{eq:conv}, we are led to
\begin{equation}\label{eq:evid}
	\partial_t |f_\eps|^2 - \partial_{xx}|f_\eps|^2 + \frac{2}{\eps^2}\nabla^2V(v_\eps)f_\eps \cdot f_\eps \leq 360\pi^2 r^{-2} |w_\eps|^21_A,
\end{equation}
where $A=[x-3r/4,x-2r/3]\cup [x+2r/3,x+3r/4]$ contains the support of $\varphi'.$ 

It follows from the comparison principle and Duhamel's formula that
\begin{equation}\begin{split}
|f_\eps|^2(\cdot,s) &\leq \exp(-\frac{\lambda_i^-}{\eps^2}(s-t)) G_{s-t} * |f_\eps|^2(\cdot,t)\\
&+ 360\pi^2r^{-2}\int_t^s \exp(-\frac{\lambda_i^-}{\eps^2}(s-\tau) )G_{s-\tau} * |w_\eps|^21_A(\cdot,\tau)\, d\tau,
\end{split}\end{equation}
where 
$$
G_\tau(y) = \frac{1}{\sqrt{4\pi\tau}}\exp(-\frac{y^2}{4\tau}).
$$
In particular, it follows from the triangle inequality that
\begin{equation}\label{eq:belette1}\begin{split}
	\|f_\eps(\cdot,s)\|_{L^2(B)}^2 &\leq \exp(-\frac{\lambda_i^-}{\eps^2}(s-t)) \|f_\eps(\cdot,t)\|^2_{L^2(\R)}\\
	&+ 360\pi^2r^{-2}\int_t^s \exp(-\frac{\lambda_i^-}{\eps^2}(s-\tau) )\|G_{s-\tau} * |w_\eps|^21_A(\cdot,\tau)\|_{L^1(B)}\, d\tau,
\end{split}\end{equation}
where $B=[x-r/2,x+r/2].$

Since $d(A,B)\geq r/6$, we have, for $t<\tau<s$,
\begin{equation}\label{eq:belette2}\begin{split}
\|G_{s-\tau} * |w_\eps|^21_A(\cdot,\tau)\|_{L^1(B)} & \leq |B| \|G_{s-\tau} * |w_\eps|^21_A(\cdot,\tau)\|_{L^\infty(B)}\\
&\leq |B| \max_{x\in B} \|G_{s-\tau}\|_{L^\infty(A)} \||w_\eps|^21_A(\cdot,\tau)\|_{L^1(B)}\\
&\leq r  \tfrac{1}{\sqrt{4\pi(s-\tau)}}\exp(-\tfrac{r^2}{144(s-\tau)}) 2\eps^{-1}M_0,
\end{split}
\end{equation}
where we have used the convolution inequality $||g*h||_\infty\le||g||_\infty||h||_1$ and assumption $(H_0).$ 

Combining \eqref{eq:belette1} with \eqref{eq:belette2} we obtain
\begin{equation}\label{eq:belette3}\begin{split}
	\|f_\eps(\cdot,s)\|_{L^2(B)}^2 \leq &2\eps^{-1}M_0\exp(-\frac{\lambda_i^-}{\eps^2}(s-t)) \\
	&+ 360\pi^{\frac32}r^{-2}\eps^{-1}M_0\int_t^s \exp\left(-\tfrac{\lambda_i^-}{\eps^2}(s-\tau)- \tfrac{r^2}{144(s-\tau)}\right)\tfrac{r}{\sqrt{s-\tau}} \, d\tau.
\end{split}
\end{equation}
We next use the inequality $x\leq \frac{12}{\sqrt{e}} \exp(x^2/288)$ in order to bound
$$
\exp\left(-\tfrac{r^2}{144(s-\tau)}\right)\tfrac{r}{\sqrt{s-\tau}} \leq \frac{12}{\sqrt{e}} \exp\left(-\tfrac{r^2}{288(s-\tau)}\right),
$$
and then the inequality
$$
\frac{\lambda_i^-}{\eps^2}(s-\tau) + \frac{r^2}{288(s-\tau)} \geq \sqrt{\frac{\lambda_i^-}{2}} \frac{r}{6\eps}
$$
in order to obtain
$$
\exp\left(-\tfrac{\lambda_i^-}{\eps^2}(s-\tau)-\tfrac{r^2}{144(s-\tau)}\right)\tfrac{r}{\sqrt{s-\tau}} \leq \exp(- \sqrt{\frac{\lambda_i^-}{2}}\frac{r}{6\eps}).
$$
Inserting the last inequality in \eqref{eq:belette3}, multiplying by $\frac{\eps}{2}$ and integrating yields therefore
\begin{equation}\label{eq:belette4}
	\int_{x-r/2}^{x+r/2} \eps\frac{|\partial_x v_\eps|^2}{2}(y,s)\, dy \leq M_0\left[  \exp\left(-\tfrac{\lambda_i^-}{\eps^2}(s-t)\right) + 2^{14}\frac{s-t}{r^2}  \exp\left(- \sqrt{\frac{\lambda_i^-}{2}} \frac{r}{6\eps}\right)\right].
\end{equation}
A completely similar computation for the function $g_\eps = \varphi (v_\eps-\sigma_i)$, in view of \eqref{eq:conv}
yields the estimate
\begin{equation}\label{eq:belette5}
	\int_{x-r/2}^{x+r/2} \frac{V(v_\eps)}{\eps}(y,s)\, dy \leq 4\frac{\lambda_i^+}{\lambda_i^-}M_0\left[  \exp\left(-\tfrac{\lambda_i}{\eps^2}(s-t)\right) + 2^{14}\frac{s-t}{r^2}  \exp\left(- \sqrt{\frac{\lambda_i}{2}}\frac{r}{6\eps}\right)\right].
	\end{equation}
The conclusion follows by summation.
\end{proof}

\medskip

\noindent{\bf Proof of Proposition \ref{regularisation} completed.} It is an immediate consequence of Proposition \ref{firstvel} and Proposition \ref{estimpar}.\qed

%
\section{ The notion of $\kappa$-confinement and optimal coverings}
\label{kappa}

\begin{definition}\label{def:conf}
	Let $X$ be a metric space and $S\subset X.$ Given a collection of distinct points $J$ in $X$, $\rho>0$ and $0<\kappa<1$, we say that $(J,\rho)$ is a $\kappa$-confined covering of $S$ at scale $\rho$ if
	\begin{equation}\label{eq:defconf1}
		S \subset \cup_{a\in J} B(a,\kappa \rho)\, ,\qquad S\cap B(a,\kappa\rho)\neq\emptyset\ \forall\, a\in J
	\end{equation}
	and
	\begin{equation}\label{eq:defconf2}
		{\rm dist}(a,b)\ge \kappa^{-1}\rho \qquad\forall a\neq b\ \text{in } J
	\end{equation}
\end{definition}
 
We  would like to draw the attention of the reader to the fact that Definition \ref{def:conf} involves in particular two parameters: the first one, $\rho$ is the typical length scale at which the confinement takes place, whereas the second one, $\kappa$, controls the rate of the confinement. 

A simple combinatorial argument yields

\begin{lemma}
\label{andouillette} 
Let $X$ be a metric space, and consider
 $\ell$ distinct points $a_1,...,a_\ell$ in $X$. Let $\delta>0$ and $0< \kappa < 1$ be given. Then there exists $\rho>0$ such that
 \begin{equation}
 \label{confin1}
 \delta\le \rho \le \kappa^{-2(\ell-1)} \delta
 \end{equation}
 and a subset $J$ of $\{a_i\}_{1\le i\le\ell}$ such that $(J,\rho)$ is a $\kappa$-confined covering of $S= \{a_1,\cdots,a_\ell\}$ at scale $\rho.$ 
 \end{lemma}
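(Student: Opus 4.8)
The plan is to set up a greedy "merging" procedure on the scales $\delta, \kappa^{-2}\delta, \kappa^{-4}\delta, \dots$, at each stage checking whether the current candidate radius already produces a $\kappa$-confined covering, and if not, passing to the next larger scale. Concretely, I would introduce the candidate scales $\rho_m = \kappa^{-2m}\delta$ for $m = 0, 1, \dots, \ell-1$, and for each $m$ look at the graph $G_m$ on the vertex set $\{a_1,\dots,a_\ell\}$ whose edges join pairs of points at distance $< \kappa^{-1}\rho_m$. Let $J_m$ be obtained by picking one representative point from each connected component of $G_m$. The first claim to verify is the covering property: I would prove by induction on $m$ that every point $a_i$ lies within distance $(\text{something})\cdot\kappa\rho_m$ of its component representative — more precisely, since a connected component of $G_m$ on at most $\ell$ vertices has diameter $< (\ell-1)\kappa^{-1}\rho_m$ in the graph metric, I actually want to track how the component sizes grow. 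This is the standard "clustering at a sequence of scales" argument.

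The cleaner way to organize it, which I would actually carry out, is a stopping-scale argument. Start with $m=0$ and the partition of the points into the connected components of $G_0$. If this partition is already "$\kappa$-separated at scale $\rho_0$", i.e. any two distinct components are at mutual distance $\geq \kappa^{-1}\rho_0$, then we are essentially done: each component has at most $\ell$ points, and I would need the within-component diameter to be $\leq \kappa\rho_0$, which forces a constraint I must be careful about. So instead the right invariant is: at scale $\rho_m$, each connected component of $G_m$ has diameter at most $\kappa \rho_m$ provided it was built up from components that were already small at the previous scale. The key numerical point is that if two clusters each of diameter $\leq \kappa\rho_{m}$ get joined because some pair of their points is within $\kappa^{-1}\rho_m$, the merged cluster has diameter at most $2\kappa\rho_m + \kappa^{-1}\rho_m \leq \kappa^{-2}\rho_m = \rho_{m+1}\cdot\kappa^{0}$... — here one sees one needs $3\kappa^{-1}\rho_m \leq \kappa\rho_{m+1} = \kappa^{-1}\rho_m$, which is false, so the bookkeeping must instead allow the diameter bound to degrade but stay below $\kappa\rho_{m+1}$ after the scale jump by a factor $\kappa^{-2}$: indeed $3\kappa^{-1}\rho_m \leq \kappa\cdot\kappa^{-2}\rho_m$ iff $3 \leq \kappa^{-2}\cdot \kappa^{2}=1$ — still false. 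The honest fix, and the one I expect the authors use, is to observe that the number of merges is at most $\ell-1$, so after at most $\ell-1$ scale-doublings no further merges are possible; one only needs the final separation and the final diameter bound, and the diameter bound at the terminal scale follows because at that scale the graph has no edges between distinct components, while within a component the points are chained by at most $\ell-1$ edges each of length $<\kappa^{-1}\rho$, giving diameter $< (\ell-1)\kappa^{-1}\rho$.

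So the precise argument I would write: define $m^\ast$ to be the smallest $m \in \{0,\dots,\ell-1\}$ such that $G_m = G_{m+1}$ as graphs (equivalently, no pair of points has distance in the half-open annulus $[\kappa^{-1}\rho_m, \kappa^{-1}\rho_{m+1})$); such $m^\ast$ exists because each strict increase of the edge set adds at least one edge and reduces the number of connected components, and there are at most $\ell-1$ components to lose. Set $\rho = \kappa^{-1}\rho_{m^\ast}$ — wait, I must choose $\rho$ so that \eqref{confin1} holds with $\delta \le \rho \le \kappa^{-2(\ell-1)}\delta$; taking $\rho = \rho_{m^\ast} = \kappa^{-2m^\ast}\delta$ with $m^\ast \leq \ell-1$ gives exactly the required range. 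Let $J$ be a set of component representatives of $G_{m^\ast}$. Then \eqref{eq:defconf2} holds because distinct components of $G_{m^\ast}$ are at distance $\geq \kappa^{-1}\rho_{m^\ast} = \kappa^{-1}\rho$ by definition of $G_{m^\ast}$ having no cross-component edges shorter than that threshold, and moreover by the stabilization $G_{m^\ast}=G_{m^\ast+1}$ we even get distance $\geq \kappa^{-1}\rho_{m^\ast+1}$, comfortably more than needed. For \eqref{eq:defconf1}, each $a_i$ is joined to its representative by a path in $G_{m^\ast}$ of at most $\ell-1$ edges, each of length $<\kappa^{-1}\rho_{m^\ast}$; but this only gives diameter $<(\ell-1)\kappa^{-1}\rho$, not $\leq \kappa\rho$. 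The resolution is that one must instead track, by downward bookkeeping, that a component containing $p$ points has diameter $< p\cdot \kappa^{-1}\rho_{m^\ast - \text{(number of merges)}}$ and relate the merge count to the scale budget — the cleanest formulation being the induction "a connected component of $G_m$ has diameter $\leq \kappa\rho_m$ for all $m \leq m^\ast$", proved by induction on $m$ using that passing from $\rho_m$ to $\rho_{m+1}=\kappa^{-2}\rho_m$ the merge of two clusters of diameter $\leq \kappa\rho_m$ via an edge of length $<\kappa^{-1}\rho_{m+1}=\kappa^{-3}\rho_m$ yields diameter $\leq 2\kappa\rho_m + \kappa^{-3}\rho_m \leq \kappa^{-3}\rho_m \cdot 3$, and one checks $3\kappa^{-3}\rho_m \leq \kappa \rho_{m+1} = \kappa^{-1}\rho_m$ is equivalent to $3\kappa^{-2}\le 1$, which fails — so in fact the correct statement one proves is the weaker diameter bound $\leq \kappa^{-1}\rho_m$ suffices for \eqref{eq:defconf1} as long as Definition \ref{def:conf} is read with $B(a,\kappa\rho)$ interpreted after one more scale jump; I would reconcile this by simply choosing the final output scale to be $\rho = \kappa^{-2m^\ast-2}\delta$ if necessary, still within the allowed range for $\ell$ large, or by absorbing the constant.

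\textbf{The main obstacle.} The real difficulty is purely one of bookkeeping the constants: getting the within-cluster diameter to come out bounded by $\kappa\rho$ (not merely $\kappa^{-1}\rho$) while keeping the between-cluster distance at least $\kappa^{-1}\rho$ and the scale $\rho$ inside $[\delta, \kappa^{-2(\ell-1)}\delta]$. All three constraints pull in different directions, and the margin is exactly the factor $\kappa^{-2}$ per scale step against at most $\ell-1$ steps; I expect the proof to hinge on choosing the stopping scale as the first $\rho_m$ at which no new merges occur and then verifying, by the telescoping estimate that the diameter of any chain of already-separated-at-the-previous-scale clusters is dominated by a geometric series in $\kappa$ summing to something $\leq \kappa\rho_m$, that the constants close. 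Once the right invariant is identified, the induction and the two defining inequalities \eqref{eq:defconf1}–\eqref{eq:defconf2} fall out immediately, and the bound $\rho \le \kappa^{-2(\ell-1)}\delta$ is just the statement that at most $\ell-1$ scale-doublings occur before stabilization.
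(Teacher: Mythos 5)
Your proposal is a plan rather than a proof, and the plan as written does not close; the two defining properties of a $\kappa$-confined covering are exactly where it breaks. First, the covering property \eqref{eq:defconf1} is never established: as your own computations show, chaining within a connected component of $G_{m^\ast}$ only gives a diameter bound of order $(\ell-1)\kappa^{-1}\rho$, which is far larger than the required radius $\kappa\rho$, and every variant you try ($3\le 1$, $3\kappa^{-2}\le 1$, shifting the output scale to $\kappa^{-2m^\ast-2}\delta$, ``absorbing the constant'') is either admitted to fail or does not help, since enlarging $\rho$ to ease the covering simultaneously tightens the separation requirement $\kappa^{-1}\rho$ in \eqref{eq:defconf2}. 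Ending with ``I expect the constants close'' leaves precisely the key estimate unproved. Second, the existence of your stopping scale $m^\ast\le\ell-1$ is not justified: you argue that whenever $G_m\subsetneq G_{m+1}$ the number of connected components drops, but adding an edge inside an existing component does not change the component count, so the graphs $G_m$ need only stabilize after $O(\ell^2)$ scale jumps, not $\ell-1$; with your scheme the bound $\rho\le\kappa^{-2(\ell-1)}\delta$ in \eqref{confin1} is therefore not secured either.

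The paper's proof is structurally different in a way that avoids your second problem: instead of clustering all points at each scale, it removes exactly one point per scale jump (if two points of the current collection $J$ are closer than $\kappa^{-1}\rho$, one of them is deleted and $\rho$ is multiplied by $\kappa^{-2}$), so the number of steps is at most $\ell-1$ simply because $\sharp J$ decreases by one each time, and \eqref{eq:defconf1} is re-checked for the deleted point at the enlarged scale (the deleted point lies in $B(a,\kappa\rho_{\rm new})$ of the point that absorbed it, since $\kappa^{-1}\rho_{\rm old}=\kappa\rho_{\rm new}$). Your covering difficulty — the accumulation of drift along chains of successive merges — is the genuinely delicate point of any argument of this type, and you were right to flag it; but flagging it is not resolving it, and your write-up resolves neither it nor the counting of scale jumps, so the proof is incomplete.
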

\begin{proof}
The proof is by iteration of merges in a finite number of steps. First,
consider the collection $J=\{a_1,\cdots,a_\ell\}$. Obviously, \eqref{eq:defconf1} is satisfied 
whatever our  choice of $\rho>0$ is. If \eqref{eq:defconf2}  is verified with the choice $\rho=\delta$ then there is nothing else to do. Otherwise, there are two points, say $a_1$ and $a_2$  after a possible relabelling, such that
\begin{equation}
\label{viva}
{\rm dist}(a_1,a_2)< \kappa^{-1}\delta.
\end{equation}
We then consider the collection $J=\{a_2,\ldots, a_\ell\}$ and set 
$\rho=\kappa^{-2}\delta$. Since by assumption \eqref{viva}
$$a_1\in B(a_2, \kappa\rho),$$ 
condition  \eqref{eq:defconf1} is therefore verified. As above, either \eqref{eq:defconf2} is verified for this choice of $\rho$ and $J$ or we go on in the same way. If the
process does not stop in $\ell-1$ steps, at the $(\ell-1)^{\text{\rm
th}}$ step we are left with one single element in $J$ and $\rho=\kappa^{-2(\ell-1)}\delta.$ At that point \eqref{eq:defconf2} becomes void and hence satisfied.
\end{proof} 

We now apply the previous result to the front set. An easy consequence of Corollary \ref{interface} and Lemma \ref{andouillette} is

\begin{lemma}\label{lem:conf}
Let $\eps>0$ and $u$ be such that
$$
\mathcal E_\eps(u) \leq M_0 < +\infty.
$$
Given $\delta >0$ and $0<\kappa<1$ verifying $\kappa\delta>2\eps,$ there exist  
$$
\delta\le \rho \le (\frac \kappa 2)^{-2M_0/\eta_0} \delta
$$
and a finite subset $J\subset \mathcal D(u)$ such that $\sharp(J)\leq M_0/\eta_0$ and $(J,\rho)$ is a 
$\kappa$-confined covering at scale $\rho$ of $\mathcal D(u).$ 
\end{lemma}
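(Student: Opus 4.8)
The plan is to combine Corollary~\ref{interface}, which produces a finite set of at most $\ell \leq M_0/\eta_0$ points in $\mathcal D(u)$ whose unit neighborhoods cover $\mathcal D(u)$, with the purely combinatorial merging procedure of Lemma~\ref{andouillette}. The only adjustments needed are to translate between the scalar picture (where fronts sit in intervals of length $2$) and the rescaled picture (intervals of size of order $\eps$), and to keep track of the fact that the covering in \eqref{eq:defconf1} must remain a covering of all of $\mathcal D(u)$, not merely of the finitely many sample points.

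First I would apply Corollary~\ref{interface} in its rescaled form: by the rescaling \eqref{rescaling} and the definitions \eqref{eeps}, the hypothesis $\mathcal E_\eps(u)\le M_0$ gives points $x_1,\dots,x_\ell$ in $\mathcal D(u)$ with $\ell \le M_0/\eta_0$ and $\mathcal D(u)\subset \cup_{i=1}^\ell [x_i-\eps, x_i+\eps]$ (the intervals of length $1$ in Corollary~\ref{interface} become intervals of length $2\eps$ after the scaling). Next I would feed the point set $\{x_1,\dots,x_\ell\}$ into Lemma~\ref{andouillette}, but applied at scale $\tfrac\kappa2\,\delta$ in the role of its parameter: more precisely, set $\kappa' = \kappa/2$ and run the merging argument with the pair $(\kappa',\delta)$. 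This yields $\rho$ with $\delta \le \rho \le (\kappa')^{-2(\ell-1)}\delta \le (\tfrac\kappa2)^{-2M_0/\eta_0}\delta$ and a subset $J\subset\{x_1,\dots,x_\ell\}\subset\mathcal D(u)$ which is a $\kappa'$-confined covering of the \emph{sample points} $\{x_1,\dots,x_\ell\}$ at scale $\rho$. Since $J$ is a sub-collection of $\mathcal D(u)$, $\sharp(J)\le\ell\le M_0/\eta_0$ and the incidence condition $\mathcal D(u)\cap B(a,\kappa\rho)\neq\emptyset$ in \eqref{eq:defconf1} holds for every $a\in J$ because $a$ itself lies in $\mathcal D(u)$.

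The one point requiring a small argument is upgrading the covering of $\{x_1,\dots,x_\ell\}$ by balls $B(a,\kappa'\rho)=B(a,\tfrac\kappa2\rho)$ to a covering of all of $\mathcal D(u)$ by balls $B(a,\kappa\rho)$. Each $x_i$ lies in some $B(a,\tfrac\kappa2\rho)$ with $a\in J$; and each point $y\in\mathcal D(u)$ lies in some $[x_i-\eps,x_i+\eps]$, hence ${\rm dist}(y,x_i)\le\eps$. Therefore ${\rm dist}(y,a)\le \tfrac\kappa2\rho + \eps$. Using $\rho\ge\delta$ and the hypothesis $\kappa\delta>2\eps$, we get $\eps < \tfrac\kappa2\delta \le \tfrac\kappa2\rho$, so ${\rm dist}(y,a) < \kappa\rho$, which gives $\mathcal D(u)\subset\cup_{a\in J}B(a,\kappa\rho)$ as required; the separation condition \eqref{eq:defconf2}, namely ${\rm dist}(a,b)\ge\kappa^{-1}\rho$ for $a\neq b$ in $J$, is immediate since $\kappa^{-1}>(\kappa')^{-1}$. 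I do not expect a genuine obstacle here — the whole statement is a bookkeeping corollary of the two previously proved lemmas; the only thing to be careful about is the factor-of-two slack in $\kappa$, which is exactly what the constraint $\kappa\delta>2\eps$ is designed to absorb, and the correct exponent $(\kappa/2)^{-2M_0/\eta_0}$ coming from $\ell-1 < M_0/\eta_0$.
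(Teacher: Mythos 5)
Your argument is correct and follows essentially the same route as the paper's own proof: rescale Corollary~\ref{interface} to get at most $M_0/\eta_0$ points whose $\eps$-neighborhoods cover $\mathcal D(u)$, apply Lemma~\ref{andouillette} with $\kappa/2$, and use $\kappa\delta>2\eps$ to absorb the extra $\eps$ in the radius ($\tfrac\kappa2\rho+\eps\le\kappa\rho$). The only blemish is the reversed inequality in your last line — the point is that $(\kappa')^{-1}=2\kappa^{-1}>\kappa^{-1}$, so the $\tfrac\kappa2$-separation ${\rm dist}(a,b)\ge 2\kappa^{-1}\rho$ implies the required ${\rm dist}(a,b)\ge\kappa^{-1}\rho$ — but the underlying reasoning is clearly the intended one.
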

\begin{proof} By Corollary \ref{interface} and \eqref{rescaling} we deduce that there exist $\ell\le \frac{M_0}{\eta_0}$ and $\ell$ points $x_1,...,x_\ell$ such that $\mathcal D(u)\subset \cup_{i=1}^\ell[x_i-\eps,x_i+\eps]$. Lemma \ref{andouillette} used with $\frac{\kappa}{2}$ yields a $\frac{\kappa}{2}$-confined covering $(J,\rho)$ of $\{x_1,...,x_\ell\}$ at scale $\rho$, with $\delta\le\rho\le \delta (\frac{\kappa}{2})^{-2\frac{M_0}{\eta_0}}$ since $\sharp J\le \ell\le \frac{M_0}{\eta_0}$. Therefore, $\mathcal D (u)\subset \cup_{x_j\in J} [x_j-h,x_j+h]$, where $h=\frac{\kappa}{2}\rho+\eps\le \kappa\rho$. Since moreover $|x_j-x_k|\ge 2\kappa^{-1}\rho>\kappa^{-1}\rho$, $(J,\rho)$ yields a $\kappa$-confined covering at scale $\rho$ for $\mathcal D(u)$.
\end{proof}

\begin{definition}\label{def:opti}
	For $\eps$, $u$, $\delta$ and $\kappa$ as in Lemma \ref{lem:conf}, we set
	$$
	n(u,\kappa,\delta) = \inf \{ \sharp (J)\},   
	$$
	where the infimum ranges over the sets $J$ for which $(J,\rho)$ is a $\kappa$-confined covering of $\mathcal D(u)$ for some $\delta\leq \rho\leq (\frac\kappa 2)^{-2M_0/\eta_0}\delta.$  
\end{definition}

\section{Stopping times}
\label{concentrate}

In this  section we define  a notion of exit time for the front set, state and establish a result which is the core of the proof to Theorem \ref{maintheo}.

In the whole section, $v_\eps$ denotes a solution to \eqref{glepsilon} verifying $(H_0^\eps).$  
We fix a time $t\geq0$ and a length scale $\delta>0$. We also fix the value of $\kappa$ to
$$
\kappa_0 = 2\alpha_0^{-1}=\frac{\eta_0}{16M_0} \leq \frac{1}{16}. 
$$
We assume that $\kappa_0\delta>2\eps$, so that Lemma \ref{lem:conf} yields the existence of $\rho$ such that
$$
\delta\leq \rho \leq (\alpha_0)^{2 M_0/\eta_0} \delta
$$
and $J=\{a_1,\cdots,a_\ell\} \subset \mathcal D(t))$ such that $(J,\rho)$ is a $\kappa_0$-confined covering of $\mathcal D(t))$ at scale $\rho$. Without loss of generality, we may assume that this covering is optimal in the sense of Definition \ref{def:opti}, i.e. $\sharp (J)=n(v_\eps(\cdot , t),\kappa_0,\delta)$.

\subsection{Defining stopping and exit times}\label{sect:stop}

We define the exit time $T_1\in [t,+\infty]$ by
$$
T_1 \equiv T_1(v_\eps,t,(J,\rho)) = \inf \{ s\geq t \ \text{ s.t. }\ \mathcal
D(s) \not\subset \cup_{a\in J}B(a,\rho)\} ,
$$
and the dissipation time $T_2\in [t,+\infty]$ by
$$
T_2 \equiv T_2(v_\eps,t) = \inf \{ s\geq t \ \text{ s.t. }\ \eps\int_t^s\int_\R |\partial_\tau v_\eps|^2(x,\tau)\,dxd\tau \geq \frac{\eta_0}{8}\}.
$$
Finally, we define the target time $T_3$ by
$$
T_3\equiv T_3(t,\eps,\rho,M_0) =  t+ \rho^2 \frac{1}{2\sqrt{6K_V\alpha_0}}\exp(\frac{k_V}{2}\frac{\rho}{\eps}),
$$
where
$$
K_V = 2^{15}(1+4\max_{i=1}^q \frac{\lambda_i^+}{\lambda_i^-}),\qquad k_V = \min_{i=1}^q\{\min(\lambda_i^-, \sqrt{\frac{\lambda_i^-}{2}}\frac{1}{6})\}\, .
$$

\subsection{Dissipation or splitting}

Set
$
\beta_0 =4 {\alpha_0}^{\frac{\alpha_0}{16}}
$
and
$$
\gamma_0 = \max\{ \alpha_0\beta_0, \frac{\alpha_0^3}{k_V} \log (4\alpha_0^2K_V), \sqrt{6K_V\alpha_0/k_V}\}.
$$

The main result of this section is

\begin{proposition}\label{prop:core}
Assume that $\delta \geq \gamma_0\eps.$ If $T_1 < T_2$ and $T_1 < T_3$, then
\begin{equation}\label{eq:augmente}
	n(v_\eps(\cdot,T_1^-),\kappa_0,\frac{\delta}{\beta_0}) \geq
n(v_\eps(\cdot,t),\kappa_0,\delta)+1,
\end{equation}
where $T_1^- = T_1-4\kappa_0\alpha_0^{-3}\delta^2\geq t.$
\end{proposition}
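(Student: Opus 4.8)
\textbf{Proof strategy for Proposition \ref{prop:core}.}

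The plan is to argue by contradiction and to quantify the energy that must dissipate when the front set ``moves'' at scale $\rho$ without splitting. Assume $T_1 < T_2$ and $T_1 < T_3$, but that \eqref{eq:augmente} fails, i.e.\ at time $T_1^-$ the front set $\mathcal D(T_1^-)$ still admits a $\kappa_0$-confined covering at some scale $\rho' \in [\delta/\beta_0, (\kappa_0/2)^{-2M_0/\eta_0}\delta/\beta_0]$ with no more balls than the optimal covering at time $t$. The key point is that between $t$ and $T_1^-$ the individual confining balls $B(a,\rho)$ are \emph{well separated} (mutual distances $\geq \kappa_0^{-1}\rho$), so the evolution on the ``corridors'' between consecutive balls is governed purely by the estimates off the front set: by Proposition \ref{firstvel} and Proposition \ref{regularisation}, on each such corridor the solution stays in a single well $B(\sigma_i,\upmu_0)$ for the whole time interval under consideration (here one uses $T_1 < T_3$ and $\delta \geq \gamma_0\eps$ to guarantee the time horizon $\alpha_0^{-3}r^2$ of those propositions is respected, with $r$ comparable to $\rho$), and the localized energy there is exponentially small.

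The heart of the argument is then the localized energy identity, Lemma \ref{alpha}, applied with a test function $\chi$ that is affine on each confining ball $B(a,\rho)$ (so $\ddot\chi = 0$ there) and whose second derivative is supported in the corridors between balls. On the corridors the discrepancy term $\xi_\eps$ is controlled by the off-front estimate \eqref{eq:mouloud3}, so $\mathcal F_S(t,\chi,v_\eps)$ is exponentially small --- of order $\exp(-k_V\rho/(2\eps))$ after integrating over a time interval of length $O(\rho^2)$; this is precisely the reason for the definition of $T_3$. Consequently, integrating \eqref{localizedenergy} from $t$ to $T_1^-$, the weighted energy $\int \chi\, e_\eps(v_\eps)$ can only have decreased, up to the exponentially small flux error, by the amount of localized dissipation $\eps\int\!\!\int \chi |\partial_t v_\eps|^2$. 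Now the crux: since the front set exits the union of the $B(a,\rho)$'s at time $T_1$ (and since \eqref{eq:augmente} is assumed to fail, the topology of the covering has not changed in a way that absorbs this motion), the clearing-out Lemma \ref{clearingout} forces a definite amount of energy --- at least a fixed fraction of $\eta_0$ --- to have been transported across one of the corridors, i.e.\ out of the region where $\chi \equiv 1$. Choosing the slopes of $\chi$ to be $O(1/\rho)$ and comparing the change in $\int \chi\, e_\eps$ with this transported energy, one concludes that $\eps\int_t^{T_1^-}\int_\R |\partial_t v_\eps|^2 \gtrsim \eta_0$, up to the exponentially small correction, which contradicts $T_1 < T_2$ (recall $T_2$ is defined by the dissipation reaching $\eta_0/8$). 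Hence \eqref{eq:augmente} must hold.

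I expect the main obstacle to be the bookkeeping in the second step: one must choose the affine-near-the-front test function $\chi$ carefully so that (i) $\ddot\chi$ is genuinely supported away from $\mathcal D(s)$ for \emph{all} $s\in[t,T_1^-]$ --- this is where the quantitative stability of the corridors from Proposition \ref{regularisation} and the choice $T_1^- = T_1 - 4\kappa_0\alpha_0^{-3}\delta^2$ enter, guaranteeing a safety margin --- and (ii) the change in $\int\chi\,e_\eps$ genuinely detects the motion of the front across a corridor, which requires tracking which minimizer $\sigma_i$ labels each corridor and using that distinct wells are separated by $\upmu_0$, hence carry energy bounded below. A secondary subtlety is verifying that the failure of \eqref{eq:augmente} really does imply that the front must have crossed a corridor rather than merely shuffling within one enlarged ball: this uses the optimality $\sharp(J) = n(v_\eps(\cdot,t),\kappa_0,\delta)$ and the fact that passing from scale $\delta$ to the smaller scale $\delta/\beta_0$ can only split clusters, never merge them, so that a covering at scale $\delta/\beta_0$ with no extra ball would have projected back to an admissible covering at scale $\delta$, contradicting the exit at $T_1$. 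Once these combinatorial and localization points are pinned down, the energy estimate itself is a routine application of Lemma \ref{alpha} together with \eqref{eq:mouloud3}.
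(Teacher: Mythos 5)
Your proposal assembles the right ingredients (the localized identity \eqref{localizedenergy} with $\ddot\chi$ supported off the front set, the off-front estimate \eqref{eq:mouloud3}, clearing-out for $\eta_0$-lower bounds, and the covering comparison between scales $\delta$ and $\delta/\beta_0$), but the quantitative mechanism you propose does not close. First, the contradiction cannot be with $T_1<T_2$ alone, and your time bookkeeping is off: the flux term is exponentially small \emph{per unit time}, but it acts on $[t,T_1]$ whose length is only constrained by $T_3-t\sim \rho^2\exp(\tfrac{k_V}{2}\tfrac{\rho}{\eps})$, which is exponentially \emph{long}, not $O(\rho^2)$ as you assume; over such times the discrepancy flux can transport an amount of weighted energy of order $\eta_0\rho$ with negligible dissipation, so ``exit before $T_2$ without splitting'' cannot be excluded without invoking $T_1<T_3$. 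In the paper the two hypotheses play distinct roles: $T_1\le T_2$ merely caps the dissipation contribution by $2\rho\cdot\tfrac{\eta_0}{8}$, and the actual contradiction (Step 5) is with $T_1<T_3$: the residual deficit $\sim\tfrac{\eta_0}{32}\rho$ must be supplied by the flux, which forces $T_1\ge T_3$. Second, your test function does not detect the exit. With a plateau-type $\chi$ (slopes $O(1/\rho)$, $\chi\equiv 1$ on the confining balls, $\ddot\chi$ in the corridors), the exit point $b_*$ at distance $\approx\rho$ from $a_{i_*}$ still lies where $\chi\approx 1$, and the front simply carries its energy along: neither $\int\chi\,e_\eps$ nor the dissipation is forced to change by $O(\eta_0)$, so no lower bound on $\eps\int\!\!\int|\partial_t v_\eps|^2$ follows. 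The paper's key device is the \emph{sign-changing} affine weight $\chi(y)=y-a_{i_*}$ on $[a_{i_*}-\tfrac43\rho,a_{i_*}+\tfrac43\rho]$, i.e.\ a first moment of the energy: under the no-splitting assumption all of $\mathcal D(T_1^-)$ near $a_{i_*}$ sits on one side, the other side carries exponentially small energy by Proposition \ref{regularisation}, so the clearing-out mass $\eta_0$ at $b_*$ produces $|\int\chi\,e_\eps(\cdot,T_1)|\gtrsim\tfrac12\eta_0\rho$ without cancellation, while at the initial (boundary-layer) time it is at most $3\kappa_0M_0\rho=\tfrac{3}{16}\eta_0\rho$; it is this first-moment growth that neither the $T_2$-capped dissipation nor the pre-$T_3$ flux can account for.

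Beyond this, two further pieces are missing for \eqref{eq:augmente}. The splitting argument only yields one new well-separated point near $a_{i_*}$; you also need \emph{persistence} of a front within $2\rho$ of every other $a_i$ (the paper's Step 6, another application of \eqref{localizedenergy} with a nonnegative plateau weight, where the dominant term is the initial energy $\ge\eta_0$), otherwise the count of balls at scale $\delta/\beta_0$ at time $T_1^-$ need not exceed $\ell$. Finally, your ``projecting back'' heuristic for comparing coverings should be replaced by the elementary counting of Step 7: the $\ell+1$ points produced have mutual distances $\ge\rho/2$, and any $\kappa_0$-confined covering at scale $\rho'\le\rho/4$ uses balls of radius $\kappa_0\rho'\le\rho/4$, hence at least $\ell+1$ of them; no contradiction argument at the level of coverings is needed.
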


In other words, if dissipation does not occur for a sufficiently long time, a
front may exit the confinement if and only if has already split before to yield an additional (well
separated at a finer scale) front. The small shift
in time in the definition of $T_1^-$ is motivated only by technical reasons.  

\medskip

\noindent{\bf Proof of Proposition \ref{prop:core}.} We divide the proof in a number of steps.

\smallskip

\noindent{\bf Step 1: lower bound on the exit time.} Set $t^+= t + 4\kappa_0^2 \alpha_0^{-3} \rho^2$. We have
$$
T_1^- \geq t^+\, ,\qquad\text{or equivalently, }\quad T_1\ge t+8\kappa_0^2\alpha_0^{-3}\rho^2\, .
$$
\begin{proof}
	By definition of $T_1$, it suffices to prove that for $t\leq s\leq t+  8\kappa_0^2 \alpha_0^{-3} \rho^2,$ $\mathcal D(s) \subset \cup_{a\in J} B(a,\rho).$ If $x\notin \cup_{a\in J} B(a,\rho),$ then since $\mathcal D(t) \subset \cup_{a\in J} B(a,\kappa_0\rho)$ and $\kappa_0\leq \frac{1}{16},$ we have $[x-4\kappa_0\rho,x+4\kappa_0\rho]\cap \mathcal D(t) = \emptyset.$ It follows from Proposition \ref{regularisation} that $x \notin \mathcal D(s)$ for $t\leq s \leq t+16\kappa_0^2 \alpha_0^{-3}\rho^2.$   
\end{proof}

\noindent{\bf Step 2: localising an exit point.} There exist $b_*\in \mathcal D(T_1)$ and $i_*\in \{1,\cdots,\ell\}$ such that
\begin{equation}\label{eq:proche}
	{\rm dist}(b_*,a_{i_*}) \in [\rho,(1+3\kappa_0)\rho]. 
\end{equation}
\begin{proof}
Let $x\in \R\setminus \cup_{a\in J}B(a,\rho+3\kappa_0\rho).$ Since $T_1^-<T_1$, it
follows that $[x-3\kappa_0\delta,x+3\kappa_0\delta]\cap \mathcal
D(T_1^-)=\emptyset.$ We induce from Proposition \ref{regularisation} that
$x\notin \mathcal D(s)$ for $t\leq s\leq T_1+5\kappa_0^2\alpha_0^{-3}\rho^2.$ By
definition of $T_1$, this implies that there exist a sequence of times $(t_j)_{j\in \N}$ in
$[T_1,T_1+5\kappa_0^2\alpha_0^{-3}\delta^2]$ such that $t_j$ decreases to $T_1$
as $j\to +\infty$ and a sequence of points
$(b_j)_{j\in \N}$  such that $b_j\in\mathcal D(t_j)$ and ${\rm dist}(b_j,\cup_{i=1}^\ell \{a_i\}) \in
[\rho,(1+3\kappa_0)\rho]$ for each $j\in \N.$  Let $b_*$ be an accumulation point of the sequence $(b_j)_{j\in \N}$.
Then $b_*\in\mathcal D(T_1)$ and there exists $i_*\in\{1,...,\ell\}$ such that dist$(a_{i_*},b_*)={\rm dist}(b_*\cup_{i=1}^\ell \{a_i\}) \in
[\rho,(1+3\kappa_0)\rho]$.
\end{proof}

\noindent{\bf Step 2bis: } There exist $c_*\in \mathcal D(T_1^-)$ such that
\begin{equation}\label{eq:proche2}
	{\rm dist}(c_*,b_*) \leq 2\kappa_0\rho.
\end{equation}
In particular,
\begin{equation}\label{eq:bienplace}
{\rm dist}(c_*,a_{i_*}) \in [\frac23\rho,\frac43\rho].
\end{equation}
\begin{proof}
Inequality \eqref{eq:proche2} follows directly from Proposition
\ref{regularisation} applied at time $T_1^-$ with $r=2\kappa_0\delta.$
Since $\kappa_0\leq 1/16,$ \eqref{eq:bienplace} then follows from
\eqref{eq:proche} and \eqref{eq:proche2}.
\end{proof}

\smallskip

\noindent{\bf Step 3: confinement of the energy after a boundary layer in time.} Let $x \in \R \setminus \cup_{a\in J} B(a,3\kappa_0\rho),$ then
	\begin{equation}\label{eq:petitesse}
		\int_{x-\kappa_0\rho}^{x+\kappa_0\rho} e_\eps(v_\eps(y,t^+))\, dy \leq K_V M_0 \exp(-k_V \alpha_0^{-3} \frac{\rho}{\eps}), 
	\end{equation}
	where $t^+ = t+4\kappa_0^2\alpha_0^{-3}\rho^2.$ 
	\begin{proof}
		Since $\mathcal D(t) \subset \cup_{a\in J} B(a,\kappa_0\rho),$ it follows that $[x-2\kappa_0\rho,x+2\kappa_0\rho]\cap \mathcal D(t) = \emptyset.$ Applying Proposition \ref{regularisation} with $r=2\kappa_0\rho$ we therefore obtain at $s=t^+$
		$$
		\int_{x-\kappa_0\rho}^{x+\kappa_0\rho} e_\eps(v_\eps(y,t^+))\, dy \leq M_0(1+4\frac{\lambda_j^+}{\lambda_j^-})\left[ \exp(-\lambda_j^-\alpha_0^{-3}\frac{4\kappa_0^2\rho^2}{\eps^2}) + 2^{14} \alpha_0^{-3} \exp(-\sqrt{\frac{\lambda_j^-}{2}}\frac{\kappa_0\rho}{3\eps})\right], 
		$$
		where $j\in\{1,\cdots,q\}$ is such that $v_\eps(x,t)\in B(\upsigma_j,\upmu_0).$ The conclusion follows from the definition of $K_V$ and $k_V$ noticing that since $\kappa_0\delta \geq 2\alpha_0\eps,$ we have
		$$
		\frac{4\kappa_0^2\rho^2}{\eps^2} \geq 4 \alpha_0 \frac{2\kappa_0\rho}{\eps}.
		$$
\end{proof}

\noindent{\bf Step 4: energy estimates off the front set.} For $t\leq s\leq T_1$ and for $i\in \{1,\cdots,\ell\}$ we have
\begin{equation}\label{eq:horscylindre}
	\int_{a_i+\frac{4}{3}\rho}^{a_i+2\rho}e_\eps(v_\eps(y,s))\, dy \leq K_V M_0 \left( \exp(-k_V \frac{s-t}{\eps^2}) + \frac{s-t}{\rho^2} \exp(-k_V \frac{\rho}{\eps})\right).  
\end{equation}
The same estimate holds for integration on the interval
$[a_i-2\rho,a_i-\frac{4}{3}\rho].$ 
\begin{proof}
	It suffices to apply Proposition \ref{estimpar} with $x=a_i+2\rho$ and $r=\frac{4}{3}\rho.$ Indeed, we have $[x-\frac{3}{4}r,x+\frac{3}{4}r] = [a_i+\rho,a_i+3\rho]$, and this set is disjoint from $\mathcal D(s)$ whenever $t\leq s\leq T_1$, by definition of $T_1.$ 
\end{proof}

\noindent
The next step contains the core of the argument.

\noindent{\bf Step 5: existence of a splitting.} There exists $c^* \in \mathcal
D(T_1^-)\cap B(a_{i_*},3\rho)$ such that 
\begin{equation}\label{eq:splitted}
	{\rm dist}(c_*,c^*) \geq \frac{\rho}{2}.
\end{equation}
\begin{proof}
Assume by contradiction that there are no such $c^*.$ We will obtain a
contradiction using the evolution equation \eqref{localizedenergy} for a suitable test function
$\chi.$ We  distinguish two cases.
If $c_*\geq a_{i_*}$, we have $c_*\geq a_{i_*}+\frac23\rho$ by
\eqref{eq:bienplace}, and this implies that
$$
\mathcal D(T_1^-) \cap [a_{i_*}-2\rho-2\kappa_0\rho, a_{i_*}+2\kappa_0\rho] =
\emptyset.
$$
We may thus apply Proposition \ref{regularisation} exactly as in the proof of Step 3, obtaining
\begin{equation}\label{eq:petitesse2}
		\int_{x-\kappa_0\rho}^{x+\kappa_0\rho} e_\eps(v_\eps(y,T_1))\, dy \leq K_V M_0 \exp(-k_V \alpha_0^{-3} \frac{\rho}{\eps}) 
	\end{equation}
	for every $x\in [a_{i_*}-2\rho,a_{i_*}]$.

In case $c_*< a_{i_*}$, we have therefore $c_*\leq a_{i_*}-\frac23\rho$ by
\eqref{eq:bienplace}, and this implies 
$$
\mathcal D(T_1^-) \cap [a_{i_*}-2\kappa_0\rho, a_{i_*}+2\rho+ 2\kappa_0\rho] =
\emptyset,
$$
so that we obtain as before
\begin{equation}\label{eq:petitesse3}
		\int_{x-\kappa_0\rho}^{x+\kappa_0\rho} e_\eps(v_\eps(y,T_1))\, dy \leq K_V M_0 \exp(-k_V \alpha_0^{-3} \frac{\rho}{\eps}) 
	\end{equation}
	for every $x\in [a_{i_*},a_{i_*}+2\rho].$

Since $b_*\in \mathcal D(T_1)$, we have by the clearing-out lemma,
	\begin{equation}\label{eq:co}
		\int_{b_*-\eps}^{b_*+\eps}e_\eps(v_\eps(y,T_1))\, dy \geq \eta_0.
	\end{equation}

Let $\chi$ be a smooth function with compact support in
$[a_{i_*}-2\rho,a_{i_*}+2\rho]$ such that $\chi(y) = y-a_{i_*}$ for $y\in
[a_{i_*}-\frac43\rho,a_{i_*}+\frac43\rho]$, $\chi(y)(y-a_{i_*}) \geq 0$ for
$y\in \R$, and\footnote{These last estimates are fulfilled for a (regularization
of a) degree three interpolation polynomial.} $\|\chi\|_\infty \leq 2\rho,$ $\|\chi''\|_\infty \leq 24\rho^{-1}.$

We integrate equality \eqref{localizedenergy} between $t^+$ and $T_1$ and
estimate each of its terms.

	Combining \eqref{eq:petitesse2} or \eqref{eq:petitesse3} with
\eqref{eq:co}, using the fact that by \eqref{eq:proche} $|a_{i_*}-y|\geq
\frac{1}{2}\rho $ for $y\in[b_*-\eps,b_*+\eps],$ and in view of the properties of $\chi,$ we are led to
	\begin{equation}\label{eq:estimhaut}
		|\int_\R e_\eps(v_\eps(y,T_1))\chi(y)\, dy| \geq
\frac{1}{2}\eta_0\rho - 2\kappa_0^{-1}K_V M_0 \exp(-k_V \alpha_0^{-3}
\frac{\rho}{\eps})\rho.
	\end{equation}

	We now estimate the same integral at time $t^+.$ Combining \eqref{eq:petitesse} outside $\cup_{a\in J}B(a,3\kappa_0\rho)$ with the bound $(H_0^\eps)$ inside  $\cup_{a\in J}B(a,3\kappa_0\rho)$, we are led to
\begin{equation}\label{eq:estimbas}
		|\int_\R e_\eps(v_\eps(y,t^+))\chi(y)\, dy| \leq
3\kappa_0M_0\rho + 2\kappa_0^{-1}K_V M_0 \exp(-k_V \alpha_0^{-3}
\frac{\rho}{\eps})\rho.
	\end{equation}

	We also have, since $\|\chi\|_\infty\leq 2\rho,$
\begin{equation}\label{eq:estimdissip}
		|\int_{t^+}^{T_1}\int_\R \eps |\partial_s
v_\eps(y,s)|^2\chi(y)\, dyds|\leq 2\rho\frac{\eta_0}{8} \leq \frac{1}{4}\eta_0\rho,
	\end{equation}
where we have used the fact that by assumption $T_1\leq T_2.$

	Finally, since $\|\chi''\|_\infty \leq 24\rho^{-1}$, since $\chi''$ is
supported in $[a_{i_*}-2\rho,a_{i_*}-\frac43\rho]\cup
[a_{i_*}+\frac43\rho,a_{i_*}+2\rho],$ and since
$|\xi_\eps(x)|\leq e_\eps$ pointwise, we obtain from \eqref{eq:horscylindre},
$$
|\int_\R  \xi_\eps(v_\eps(y,s))\chi''(y)\, dy| \leq 48\rho^{-1} K_VM_0 \left( \exp(-k_V \frac{s-t}{\eps^2}) + \frac{s-t}{\rho^2} \exp(-k_V \frac{\rho}{\eps})\right).  
$$
provided $t\leq s\leq T_1.$ Integrating the last inequality from $t^+$ to $T_1$
we are led to
\begin{equation}\label{eq:estimdiscrep}
|\int_{t^+}^{T_1}\int_\R  \xi_\eps(v_\eps(y,s))\chi''(y)\, dyds| \leq
48\rho^{-1} K_VM_0 \left( \frac{\eps^2}{k_V} + \frac{(T_1-t)^2}{2\rho^2} \exp(-k_V \frac{\rho}{\eps})\right). 	
	\end{equation}

Combining \eqref{eq:estimhaut}, \eqref{eq:estimbas}, \eqref{eq:estimdissip} and
\eqref{eq:estimdiscrep} with \eqref{localizedenergy} we deduce
\begin{multline}\label{eq:cle}
\big(\frac12\eta_0-3\kappa_0M_0-\frac14\eta_0\big)\rho - 4 \kappa_0^{-1}K_V M_0
\exp(-k_V \alpha_0^{-3} \frac{\rho}{\eps})\rho \\ \leq 48\rho^{-1} K_VM_0 \left( \frac{\eps^2}{k_V} + \frac{(T_1-t)^2}{2\rho^2} \exp(-k_V \frac{\rho}{\eps})\right). 	
\end{multline}

Since by assumption $\frac{\rho}{\eps} \geq \sqrt{6K_V\alpha_0/k_V},$ we have
\begin{equation}\label{eq:bouffe0}
48\rho^{-1} M_0\frac{\eps^2}{k_V} \leq \frac{\eta_0}{64}\rho.
\end{equation}
Since by assumption $\frac{\rho}{\eps} \geq \frac{\alpha_0^3}{k_V} \log (4\alpha_0^2K_V),$ we also have
\begin{equation}\label{eq:bouffe1}
4\kappa_0^{-1}K_V M_0 \exp(-k_V\alpha_0^{-3}\frac{\rho}{\eps})\rho \leq \frac{\eta_0}{64}\rho.
\end{equation}
Combining \eqref{eq:cle} with \eqref{eq:bouffe0} and \eqref{eq:bouffe1} we finally deduce
$$
48 \rho^{-1} K_V M_0 \frac{(T_1-t)^2}{2\rho^2} \exp(-k_V \frac{\rho}{\eps}) \geq \frac{\eta_0}{32}\rho,
$$
so that

$$
T_1 \geq t+ \rho^2 \frac{1}{2\sqrt{6K_V\alpha_0}}\exp(\frac{k_V}{2}\frac{\rho}{\eps})\geq T_3,
$$
the desired conclusion. 
\end{proof}

\noindent{\bf Step 6: persistence of the other fronts.}  For $i\in \{1,\cdots,\ell\}\setminus \{i_*\},$ 
$$
\mathcal D(T_1^-) \cap [a_i- 2\rho,a_i+2\rho] \neq \emptyset.
$$
\begin{proof}
The proof is by contradiction, very similar and actually simpler than the one of Step 5, so that we only briefly sketch it. It relies also on equation \eqref{localizedenergy}, with a function $\chi$ which is here taken to be non negative, identically equal to $1$ in $[a_i-\frac{4}{3}\rho,a_i+\frac{4}{3}\rho],$ with compact support in $[a_i-2\rho,a_i+2\rho]$ and verifying $\|\chi\|_\infty\leq 1$ and $\|\chi''\|_\infty\leq 14\delta^{-2}$.This time, the dominant term is given by $\int e_\eps \chi$ taken at the initial time $t$, and not at the exit time.  
\end{proof}

\noindent{\bf Step 7: proof of \eqref{eq:augmente}.} In view of Step 5 and Step 6, there exists $\ell+1$ points in $\mathcal D(T_1^-)$ such that the mutual distance between any two of them is bounded from below by $\frac{\rho}{2}.$ Any covering of $\mathcal D(T_1^-)$ with balls of radius smaller than $\rho/4$ must therefore contain at least $\ell+1$ balls. Notice that the existence of a $\kappa_0$-confined covering of $\mathcal D(T_1^-)$ at scale $\rho'\le\rho/4$ follows by  Lemma \ref{lem:conf} with $\delta$ replaced by $\delta'=\delta/\beta_0$: by assumption we have  $\kappa_0\delta'\ge 2\eps$ and the scale $\rho'$ of the $\kappa_0$-confined covering does not exceed $\delta'(\frac{\kappa_0}{2}^{-2M_0/\eta_0})\le \delta/4\le\rho/4$.
The conclusion follows. \qed

\section{Proof of Theorem 1bis
}
\label{ail2}

 We distinguish between two cases, depending on the value of the ratio $\frac{R}{\eps}.$

\medskip

\noindent{\bf Case I : $R$ is sufficiently large\footnote{The actual limiting ratio is given by \eqref{eq:pastroppetit}.} with respect to $\eps.$} 

This is the most interesting case, where we apply the analysis of the previous sections. In Section \ref{concentrate}, the initial time $t$ as well as the typical length scale $\delta$ were fixed. In order to prove Theorem 1bis
in the present case, we will use Proposition \ref{prop:core} inside an iteration argument, containing a finite and bounded from above number of steps, each step corresponding to a new time $t_n$ and a new length scale $\delta_n.$ 

 More precisely, we set $t_0=0$ and will fix the value of $\delta_0$ at the end of the argument. It remains to define the times $t_n$ and the scales $\delta_n$ iteratively.

 Let $n\geq 0$ be fixed. In view of Section \ref{concentrate}, provided $\delta_n \geq \gamma_0 \eps$ there exist
 \begin{equation}\label{eq:repete}\delta_n\leq \rho_n\leq (\frac{\kappa_0}{2})^{-2M_0/\eta_0}\delta_n\end{equation} and $J_n=\{a_1^n,\cdots,a_{\ell_n}^n\}$ such that $(J_n,\rho_n)$ is an optimal $\kappa_0$-covering of $\mathcal D(t_n)$ at scale $\rho_n.$ 
Define the corresponding stopping and exit times $T_{1,n},T_{2,n},T_{3,n}$ according to Section \ref{sect:stop}.
\medskip

We distinguish three cases : 

\medskip

{\bf \mathversion{bold} $({\rm Case \ 1)}_n  :\ T_{3,n} < T_{1,n}$  \mathversion{normal}} In that case, we define $t_{n+1}\equiv t_{\rm fin} = T_{3,n}$ and the iteration process stops. 

\medskip

{\bf \mathversion{bold} $({\rm Case \ 2)}_n  :\  T_{3,n} \geq T_{1,n} \text{ and } T_{2,n}< T_{1,n}$          \mathversion{normal}} In that case, corresponding to a dissipation time, we set $t_{n+1}= T_{2,n}$ and $\delta_{n+1}=\delta_0$ (we reset the resolution scale to its initial value). 

\medskip

{\bf \mathversion{bold} $({\rm Case \ 3)}_n : \   T_{1,n} \leq \min(T_{2,n},T_{3,n}) $          \mathversion{normal}} In that case, corresponding to a splitting, we set $t_{n+1} = T_{1,n}^-$ and $\delta_{n+1}= \frac{1}{\beta_0}\delta_n.$ 

\medskip

By construction, in each case,
\begin{equation}\label{eq:contenu}
	\mathcal D(s) \subset \mathcal D(t_n) + [-2\rho_n,2\rho_n] \qquad \text{ for } t_n\leq s \leq t_{n+1}.
\end{equation}

Moreover, if $({\rm Case \ 3)}_n$ holds, then by Proposition \ref{prop:core}
\begin{equation}\label{eq:splitencore}
n(v_\eps(\cdot,t_{n+1}),\kappa_0,\delta_{n+1}) \geq n(v_\eps(\cdot,t_{n}),\kappa_0,\delta_{n}) +1.
\end{equation}

Since $\delta_{n+1}\geq \eps$ by assumption, $n(v_\eps(\cdot,t_{n+1}),\kappa_0,\delta_{n+1})\leq \frac{M_0}{\eta_0}$ by Corollary \ref{interface}.
It follows by \eqref{eq:splitencore} that $({\rm Case \ 3)}_n$ may only occur for at most $\frac{M_0}{\eta_0}$ consecutive values of $n$, provided $\delta_n$ remains greater than $\gamma_0\eps.$ In view of the iteration process, this also implies that $\delta_n$ is bounded from below by
\begin{equation}\label{eq:bornebas}
	\delta_n \geq \beta_0^{-\frac{M_0}{\eta_0}}\delta_0.
\end{equation}

On the other hand, since the total dissipation is bounded from above by the total energy of the initial datum, $({\rm Case \ 2)}_n$, in view of the definition of $T_{2,n}$, may only occur for at most $4\frac{M_0}{\eta_0}$ distinct values of $n.$ 

We induce from the two previous limitations that the process may contain at most 
$$
(\frac{M_0}{\eta_0}+1)({4\frac{M_0}{\eta_0}}) +1
$$
steps, so that $t_{\rm fin}$ is well defined. Moreover by \eqref{eq:contenu}, \eqref{eq:repete}, the fact that $\sum_{k=0}^\infty \beta_0^{-k}\leq 2,$ and the triangular inequality,  we obtain
\begin{equation*}
	\mathcal D(s) \subset \mathcal D(0) + [-r_{\rm fin}, r_{\rm fin}] \qquad \text{ for } 0\leq s \leq t_{fin},
\end{equation*}
where
$$
r_{\rm fin} = 4(\frac{\kappa_0}{2})^{-2\frac{M_0}{\eta_0}}\left( 4\frac{M_0}{\eta_0}+1\right)\delta_0 =\alpha_0^{\frac{\alpha_0}{16}}(\frac{\alpha_0}{2}+4)\delta_0. 
$$
We may now define the value of $\delta_0$ to be
$$
\delta_0 = R{\alpha_0^{-\frac{\alpha_0}{16}}(\frac{\alpha_0}{2}+4)^{-1}}.
$$
If
\begin{equation}\label{eq:pastroppetit}
R \geq (\alpha_0)^{\frac{\alpha_0}{16}}(\frac{\alpha_0}{2}+4) \beta_0^{\frac{M_0}{\eta_0}} \gamma_0\eps,
\end{equation}
we infer from \eqref{eq:bornebas} that the condition $\delta_n\geq \gamma_0\eps$ is met along the whole process. We also infer from \eqref{eq:bornebas} and the definition of $T_{3,n}$ that
$$
t_{\rm fin} \geq \left(\frac{R}{K_0}\right)^2 \exp(\frac{1}{K_0}\frac{R}{\eps}) 
$$
where
where
\begin{equation*}\begin{split}
K_0 &=\beta_0^{\frac{M_0}{\eta_0}}
\alpha_0^{\frac{\alpha_0}{16}}(\frac{\alpha_0}{2}+4)\max\{\root{4}\of{24 K_V\alpha_0},\,  \frac{2}{k_V}\} \\
&\geq 2^{10}\left(32\frac{M_0}{\eta_0}\right)^{2(\frac{M_0}{\eta_0})(1+\frac{M_0}{\eta_0})}\max_{i=1}^q\{ \max(\root{4}\of{\frac{\lambda_i^+}{\lambda_i^-}},\frac{1}{\lambda_i^-})\}.
\end{split}\end{equation*}

The proof is then completed in case \eqref{eq:pastroppetit} holds.

\medskip

\noindent{\bf Case II : \eqref{eq:pastroppetit} does not hold but $R\geq \alpha_0\eps$.}

In that case, it suffices to invoke Proposition \ref{regularisation}. The conclusion follows adjusting the constant $K_0$ if necessary.\qed

\section{Proof of Theorem \ref{thm:pouf}}

It is a direct consequence of Theorem 1bis and Proposition \ref{estimpar} that for $0\leq t\leq (\frac{R}{K_0})^2\exp(\frac{R}{K_0\eps})$ and $x\in [x_0-\frac12 R, x_0+\frac12R],$  
\begin{equation}\label{eq:interetape}
	\int_{x-\eps}^{x+\eps} e_\eps(v_\eps(y,t)\, dy \leq K M_0 \left[ \exp\left(-\frac{t}{K\eps^2}\right) + \frac{t}{R^2}\exp\left(-\frac{R}{K\eps}\right)  \right],
\end{equation}
for some constant $K$ depending only on $V.$ To derive the pointwise bounds given by \eqref{eq:plouf1}, it suffices then to invoke parabolic regularization and scaling. Indeed, if $v$ is a solution of \eqref{glpara} on a cylinder of the type $[y-1,y+1]\times[s-1,s]$, such that $v \in B(\upsigma_i,\upmu_0)$ for some $i\in \{1,\cdots,q\}$ on that cylinder, then 
$$
|\partial_t v(y,s)|^2 + e(v(y,s)) \leq K' \int_{s-1}^s \int_{y-1}^{y+1} e(v(z,\tau))\, dzd\tau.
$$
Therefore, by scaling if $v_\eps$ is a solution of \eqref{glepsilon} on a cylinder of the type $[y-\eps,y+\eps]\times[s-\eps^2,s]$, such that $v_\eps \in B(\upsigma_i,\upmu_0)$ for some $i\in \{1,\cdots,q\}$ on that cylinder, then\begin{equation}\label{eq:scaledeps}
	\eps^4|\partial_t v_\eps(y,s)|^2 + \eps e_\eps(v_\eps(y,s)) \leq K' \eps^{-2}\int_{s-\eps^2}^s \int_{y-\eps}^{y+\eps} e_\eps(v_\eps(z,\tau))\, dzd\tau.
\end{equation}
The conclusion \eqref{eq:plouf1} with $K_1=2K\max(K',1)$ follows combining \eqref{eq:interetape} and \eqref{eq:scaledeps}.\qed

 \section{Relaxation to stationary fronts}
  The aim of this section is to provide a proof to Theorem \ref{theglue}. The starting idea is to determine a good time slice for which the   integral  of the dissipation $\vert \partial_t v_\eps \vert^2$ is small.   Then, the main part of the proof is devoted to the study of solutions to  the  perturbed ordinary differential equation
  \begin{equation}
   \label{odepertu}
   u_{xx}=\eps^{-2}\nabla V(u)+f \ \ {\rm  on } \  \R, 
  \end{equation}
   where the function $f$ belongs to $L^2(\R)$.
   
  \subsection{Study of the perturbed equation \eqref{odepertu} : initial value problem}
  It is useful to recast equation \eqref{odepertu} as a system of two differential equations of first order. For that purpose, we set 
  $w=\eps u_x$
   so that  \eqref{odepertu} is equivalent to the system
   \begin{equation}
   \label{prems}
   \begin{split}
   u_x&=\frac{1}{\eps}w \\
   w_x&=\frac{1}{\eps}\nabla V(u)+\eps f, 
      \end{split}
          \end{equation}
which we may write in the  condensed form 
\begin{equation}
\label{eqvect}
U_x=\frac{1}{\eps} G(U)+\eps F \ {\rm  on } \  \R, 
\end{equation}
where, for $x$ in $\R$, we have set  $U(x)=(u(x), w(x))$ and $F(x)=(0, f(x))$, and where $G$ denotes the vector field on $\R^{2k}$ given by $G(u_1,u_2)=(u_2, \nabla V(u_1))$. Notice that
$$\vert \nabla G(u_1, u_2) \vert \leq  A(\vert u_1 \vert),$$
 where $A\geq 1$ is  some continuous non-decreasing scalar function.
We next compare a given global bounded solution $u$ of \eqref{odepertu} to a solution  $u^0$ of the  unperturbed equation
  \begin{equation}
   \label{odenonpertu}
   u_{xx}=\eps^{-2}\nabla V(u) \ , 
  \end{equation}
with similar initial condition at some point $x_0\in \R$. 
We denote accordingly $U^0=(u^0, \eps^{-1} u_x^0)$ on its maximal interval of existence. 

\medskip

As a consequence of Gronwall's identity,  we have
  
 \begin{lemma} 
 \label{gron}
Let $u$ and $u_0$ be as above.  Assume that for some $x_0\in \R$ and $a>0,$  
\begin{equation}
\label{thesuppose}
|U(0)-U^0(0)| + \frac{\eps^\frac32}{\sqrt{2A(\|u\|_\infty +1)}} \|f\|_{2} \leq \exp\left( -\frac{A(\|u\|_\infty +1)a}{\eps}\right).
\end{equation}
Then $u^0$ is well defined on $[x_0-a,x_0+a]$ and we have
\begin{equation}
\label{loco}
\Vert U-U^0 \Vert_{L^\infty([x_0-a,x_0+a])} \leq \left( |U(0)-U^0(0)| + \frac{\eps^\frac32}{\sqrt{2A(\|u\|_\infty +1)}} \|f\|_{2}\right) \exp\left( \frac{A(\|u\|_\infty +1)a}{\eps}\right).
   \end{equation}
\end{lemma}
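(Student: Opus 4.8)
The plan is to treat this as a standard Gronwall estimate for the difference of two solutions to the first-order system \eqref{eqvect}, being careful that the unperturbed solution $u^0$ might \emph{a priori} blow up before reaching the endpoints $x_0\pm a$, which is why the smallness hypothesis \eqref{thesuppose} is needed. First I would set $D(x) = U(x) - U^0(x)$, where $U$ solves $U_x = \frac1\eps G(U) + \eps F$ globally and $U^0$ solves $U^0_x = \frac1\eps G(U^0)$ on its maximal interval of existence around $x_0$. Subtracting the two equations gives
\begin{equation*}
D_x = \frac1\eps\bigl(G(U)-G(U^0)\bigr) + \eps F,
\end{equation*}
and as long as $\|U^0\|_\infty$ stays controlled (say $\|u^0\|_\infty \le \|u\|_\infty + 1$), the mean value inequality together with the bound $|\nabla G| \le A(|u_1|)$ yields $|G(U)-G(U^0)| \le A(\|u\|_\infty+1)\,|D|$. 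Writing $\Lambda = A(\|u\|_\infty+1)$ for brevity, integration from $x_0$ gives, for $x$ to the right of $x_0$,
\begin{equation*}
|D(x)| \le |D(x_0)| + \frac{\Lambda}{\eps}\int_{x_0}^x |D(y)|\,dy + \eps\int_{x_0}^x |f(y)|\,dy,
\end{equation*}
and by Cauchy--Schwarz $\eps\int_{x_0}^{x_0+a}|f| \le \eps\sqrt{a}\,\|f\|_2$; however, to match the stated constant one instead absorbs the forcing differently — see below.

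The key subtlety, and the step I expect to be the main obstacle, is the bootstrap on $\|u^0\|_\infty$: Gronwall only controls $|D|$ on an interval where $U^0$ is already known to exist and stay bounded, so one must run a continuity/maximal-interval argument. Let $a^* \le a$ be the largest value such that $U^0$ exists on $[x_0-a^*,x_0+a^*]$ with $\|u^0\|_\infty \le \|u\|_\infty+1$ there. On that interval the estimate above, via the integral form of Gronwall's lemma, gives
\begin{equation*}
\|U-U^0\|_{L^\infty([x_0-a^*,x_0+a^*])} \le \Bigl(|U(0)-U^0(0)| + \tfrac{\eps^{3/2}}{\sqrt{2\Lambda}}\|f\|_2\Bigr)\exp\!\Bigl(\tfrac{\Lambda a^*}{\eps}\Bigr);
\end{equation*}
here the precise shape of the forcing term comes from treating $\eps F$ in the $L^2$-in-$x$ norm inside the Gronwall integral rather than $L^1$, i.e. estimating $\int |D|\,|F|$-type contributions with Cauchy--Schwarz and the exponential weight, which produces the factor $\eps^{3/2}/\sqrt{2\Lambda}$ (the $\sqrt{2\Lambda}$ being the square root of twice the Gronwall rate). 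By hypothesis \eqref{thesuppose} the prefactor is at most $\exp(-\Lambda a/\eps)$, so the right-hand side is $\le \exp(-\Lambda a/\eps)\exp(\Lambda a^*/\eps) \le 1$ as long as $a^* \le a$; in particular $\|u^0 - u\|_\infty \le 1$ on $[x_0-a^*,x_0+a^*]$, hence $\|u^0\|_\infty \le \|u\|_\infty + 1$ there, \emph{with strict inequality if $a^* < a$}. A strict inequality at the endpoint of a maximal interval is impossible unless $a^* = a$ (otherwise one could extend), so $a^* = a$, i.e. $u^0$ is defined on all of $[x_0-a,x_0+a]$, and \eqref{loco} follows from the displayed estimate with $a^*$ replaced by $a$.

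A couple of bookkeeping points worth flagging in the write-up: the statement writes the hypothesis and conclusion in terms of $U(0)$ and $U^0(0)$ while the interval is centered at $x_0$ — I would either assume $x_0 = 0$ without loss of generality (the equation is autonomous up to the forcing, and $f \in L^2$ is translation-stable) or carry $x_0$ throughout and read ``$U(0)$'' as ``$U(x_0)$''. Also, monotonicity of $A$ is used to guarantee $A(|u^0(y)|) \le A(\|u\|_\infty+1) = \Lambda$ throughout the bootstrap interval, and $A \ge 1$ ensures the exponent $\Lambda a/\eps$ dominates crude error terms; both should be invoked explicitly. The argument is symmetric in $x \ge x_0$ and $x \le x_0$, so it suffices to do one side. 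No genuinely hard estimate is involved — the only real content is organizing the continuation argument so that the Gronwall bound and the a priori bound on $\|u^0\|_\infty$ close on each other.
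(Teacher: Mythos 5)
Your proposal is correct and follows essentially the same route as the paper: subtract the two first-order systems, bound $G(U)-G(U^0)$ by $A(\|u\|_\infty+1)|U-U^0|$ on the interval where $\|u^0\|_\infty\le\|u\|_\infty+1$, apply Gronwall with the forcing estimated by Cauchy--Schwarz against the exponential weight (which is exactly where the factor $\eps^{3/2}/\sqrt{2A(\|u\|_\infty+1)}$ comes from), and close the bootstrap on the maximal interval using hypothesis \eqref{thesuppose}. The paper's write-up is terser on the continuation step (it simply defines the largest interval $I$ with $\|u^0\|_{L^\infty(I)}\le\|u\|_\infty+1$ and concludes $[-a,a]\subset I$), but the argument you spell out is the one intended.
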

 
 \begin{proof}  Without loss of generality, we may assume that $x_0=0.$ Let $I$ be the largest interval containing $0$ and such that
 \begin{equation}
 \label{helicopter}
 \Vert u^0 \Vert_{L^\infty(I)} \leq \Vert u  \Vert_{\infty}+1.
 \end{equation}
On $I$, since $(U-U^0)_x= G(U)-G(U^0)+\eps F$ we obtain the inequality
$$\vert (U-U^0)_x\vert \leq \frac{ A \left(\Vert u  \Vert_{\infty} +1\right)}{\eps} \vert  U-U^0\vert +\eps \vert F \vert.$$
It follows from Gronwall's inequality, that, for $x \in I$, 
\begin{multline*}
\vert (U-U^0)(x)\vert \leq
   \exp \left( \frac{ A \left(\Vert u  \Vert_\infty +1\right)|x|}{\eps}\right)\vert (U-U^0)(0)\vert\\ + 
 | \int_0^x \eps \vert F(x-y)\vert  \exp \left( \frac{ A \left(\Vert u  \Vert_\infty +1\right)  \vert x\vert }{\eps}  \right) dy|,
  \end{multline*}
   so that by the Cauchy-Schwarz inequality, we are led to the bound, for $x\in I$,
  \begin{equation}
  \label{massa}
\Vert (U-U^0)(x) \Vert \leq \left( |U(0)-U^0(0)| + \frac{\eps^\frac32}{\sqrt{2A(\|u\|_\infty +1)}} \|f\|_{2}\right) \exp\left( \frac{A(\|u\|_\infty +1)|x|}{\eps}\right).
\end{equation}   
Hence, if \eqref{thesuppose} is verified, then $[-a,a] \subset I$ and \eqref{loco} follows.
\end{proof}
   
We will combine the previous lemma with

\begin{lemma}\label{lem:discrepok}
	Let $u$ be a global solution of \eqref{odepertu} such that $\mathcal{E}_\eps(u)\leq M_0<+\infty.$ Then
$$
\|\xi_\eps(u)\|_\infty \leq \sqrt{2} \eps M_0 \|f\|_2. 
$$
\end{lemma}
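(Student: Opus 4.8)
The plan is to use the fact that the discrepancy $\xi_\eps(u) = \eps\frac{|u_x|^2}{2} - \frac{V(u)}{\eps}$ is \emph{almost} constant along solutions of the perturbed equation \eqref{odepertu}, and then to exploit the finiteness of the energy to pin down that constant to be small. First I would compute the derivative of $\xi_\eps(u)$ with respect to $x$. Differentiating and using $u_{xx} = \eps^{-2}\nabla V(u) + f$, the terms $\eps u_x\cdot u_{xx}$ and $-\eps^{-1}\nabla V(u)\cdot u_x$ combine, leaving only the contribution of the forcing: $\frac{d}{dx}\xi_\eps(u) = \eps\, u_x \cdot f$. Hence for any $x_1, x_2 \in \R$,
$$
\xi_\eps(u)(x_2) - \xi_\eps(u)(x_1) = \eps\int_{x_1}^{x_2} u_x\cdot f\, dx,
$$
and by Cauchy--Schwarz the right-hand side is bounded by $\eps\|u_x\|_{L^2(\R)}\|f\|_{L^2(\R)}$. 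Since $\mathcal E_\eps(u) \leq M_0$ gives $\frac{\eps}{2}\|u_x\|_{L^2(\R)}^2 \leq M_0$, i.e. $\|u_x\|_{L^2(\R)} \leq \sqrt{2M_0/\eps}$, this shows the total oscillation of $\xi_\eps(u)$ over all of $\R$ is at most $\eps \sqrt{2M_0/\eps}\,\|f\|_2 = \sqrt{2\eps M_0}\,\|f\|_2$.

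Next I would argue that $\xi_\eps(u)$ cannot stay bounded away from zero at infinity. Since $\mathcal E_\eps(u) = \int_\R \left(\frac{\eps}{2}|u_x|^2 + \frac{1}{\eps}V(u)\right) < \infty$ and both integrands are nonnegative, there is a sequence $x_n \to +\infty$ (or $-\infty$) along which $\frac{\eps}{2}|u_x(x_n)|^2 + \frac{1}{\eps}V(u(x_n)) \to 0$; in particular $\xi_\eps(u)(x_n) \to 0$ along that sequence. Combining this with the oscillation bound from the previous step, we get $\|\xi_\eps(u)\|_{L^\infty(\R)} \leq \sqrt{2\eps M_0}\,\|f\|_2$, which is exactly the claimed inequality.

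The only mild subtlety — and the step I'd watch most carefully — is the justification that the energy integrand becomes small along \emph{some} sequence tending to infinity. This is immediate if one knows, say, that $u_x$ and $V(u)$ are uniformly continuous (which follows from the equation and a bound on $\|u\|_\infty$, itself a consequence of $(\text{H}_3)$ together with the energy bound), but in fact one does not even need continuity: if $g \geq 0$ and $\int_\R g < \infty$ then $\liminf_{x\to+\infty} g(x) = 0$ trivially, so picking $x_n$ with $\frac{\eps}{2}|u_x(x_n)|^2 + \frac{1}{\eps}V(u(x_n)) \to 0$ is automatic. One should just be slightly careful that $\xi_\eps(u)$ is continuous (it is, since $u \in C^2$ away from regularity issues of $f$, and in any case $C^1$ with the derivative formula above valid a.e.), so that the pointwise sup over $\R$ is controlled by the oscillation estimate plus the subsequential limit. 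Modulo this routine point, the proof is a two-line computation of $\frac{d}{dx}\xi_\eps(u)$ followed by Cauchy--Schwarz and the energy bound.
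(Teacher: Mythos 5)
Your argument is the same as the paper's: differentiate the discrepancy to get $\frac{d}{dx}\xi_\eps(u)=\eps f\cdot u_x$, integrate from a sequence of points tending to infinity along which the energy density (hence $\xi_\eps$) vanishes, and conclude by Cauchy--Schwarz together with $\tfrac{\eps}{2}\|u_x\|_{L^2}^2\le M_0$; the point you flag about vanishing at infinity is exactly what the paper leaves implicit. One caveat: this computation yields $\|\xi_\eps(u)\|_\infty\le \sqrt{2\eps M_0}\,\|f\|_2=\sqrt{2}\,\eps^{1/2}M_0^{1/2}\|f\|_2$, which is \emph{not} literally the stated bound $\sqrt{2}\,\eps M_0\|f\|_2$ (for $\eps M_0<1$ the stated bound is strictly stronger), so your closing claim that the two coincide is inaccurate. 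Since the paper's own proof is the identical computation, the constant in the statement is presumably a typo for $\sqrt{2\eps M_0}$, and the $\eps^{1/2}$ form suffices, after adjusting constants, for its later use in Corollary \ref{cor:9.1}.
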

\begin{proof}
This is a direct consequence of the equality
$$
\frac{d}{dx} \xi_\eps(u) = \eps f \frac{d}{dx}u,
$$
Cauchy-Schwarz inequality, and the fact that $\xi_\eps(u)$ tends to zero at infinity since $u$ has finite energy.  
\end{proof}

\begin{corollary}\label{cor:9.1}
	Let $u$ be a global solution of \eqref{odepertu} such that $\mathcal{E}_\eps(u)\leq M_0<+\infty.$ There exist a constant $C_0>0$ depending only on $M_0$ and $V$ such that if $x_0\in \mathcal D(u)$ and if
\begin{equation}\label{eq:serend}
	b = \frac{\eps}{A(\|u\|_\infty+1)} \log\left(\frac{1}{C_0\eps^\frac32 \|f\|_2}\right) > 0,
\end{equation}
then there exists a solution $u^0$ of \eqref{odenonpertu} defined on $[x_0-b,x_0+b]$ and verifying 
\begin{eqnarray}
&&\xi_\eps(u^0) \equiv 0,\\
&&\|U-U^0\|_{L^\infty([x_0-a,x_0+a])} \leq  \exp\left( -\frac{A(\|u\|_\infty +1)}{\eps}(b-a)\right),
\end{eqnarray}
for every $0<a<b.$
\end{corollary}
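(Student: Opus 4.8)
The plan is to combine the two preceding lemmas mechanically. The quantities $C_0$, $b$, and the comparison solution $u^0$ are all packaged so that Corollary \ref{cor:9.1} is essentially a bookkeeping consequence, once one notices that a global finite-energy solution of \eqref{odepertu} whose front set is non-empty must, near a point of $\mathcal D(u)$, have bounded sup-norm and controlled discrepancy.

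\medskip

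\noindent\textbf{Step 1: local control of the initial data for the ODE comparison.} Fix $x_0\in\mathcal D(u)$. The idea is to construct $u^0$ as the solution of the unperturbed equation \eqref{odenonpertu} with $U^0(x_0)=U(x_0)$, so that $|U(x_0)-U^0(x_0)|=0$ and the first term in \eqref{thesuppose} vanishes. Then the left-hand side of \eqref{thesuppose} reduces to $\frac{\eps^{3/2}}{\sqrt{2A(\|u\|_\infty+1)}}\|f\|_2$. One must check that this is $\leq \exp(-A(\|u\|_\infty+1)a/\eps)$ for all $0<a<b$, with $b$ as in \eqref{eq:serend}. Writing $A=A(\|u\|_\infty+1)$, the condition $\frac{\eps^{3/2}}{\sqrt{2A}}\|f\|_2 \leq \exp(-Aa/\eps)$ is equivalent to $a \leq \frac{\eps}{A}\log\!\big(\frac{\sqrt{2A}}{\eps^{3/2}\|f\|_2}\big)$; so choosing $C_0$ so that $\frac{1}{C_0} \leq \sqrt{2A}$ — e.g. $C_0 = (\sqrt{2}\inf_{s\le C(M_0)}A(s+1))^{-1}$, using that $\|u\|_\infty$ is bounded by a constant $C(M_0)$ depending only on $M_0$ and $V$ via the energy bound and $(\text{H}_3)$ — guarantees that $b$ as defined in \eqref{eq:serend} satisfies $b \leq \frac{\eps}{A}\log\!\big(\frac{\sqrt{2A}}{\eps^{3/2}\|f\|_2}\big)$. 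Hence \eqref{thesuppose} holds for every $0<a<b$, and Lemma \ref{gron} applies.

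\medskip

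\noindent\textbf{Step 2: apply Lemma \ref{gron} and extract the exponential estimate.} With \eqref{thesuppose} verified, Lemma \ref{gron} gives that $u^0$ is defined on $[x_0-a,x_0+a]$ (for every $a<b$, hence on $[x_0-b,x_0+b]$ by exhaustion) and
\[
\|U-U^0\|_{L^\infty([x_0-a,x_0+a])} \leq \frac{\eps^{3/2}}{\sqrt{2A}}\|f\|_2 \, \exp\!\Big(\frac{Aa}{\eps}\Big) \leq \exp\!\Big(-\frac{A(b-a)}{\eps}\Big),
\]
where the last inequality uses precisely the definition of $b$. This is the second displayed bound in the statement (the paper's display writes $a$ where it means the generic radius; the content is $\|U-U^0\|_{L^\infty([x_0-a,x_0+a])}$).

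\medskip

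\noindent\textbf{Step 3: zero discrepancy of $u^0$.} It remains to see $\xi_\eps(u^0)\equiv 0$. Since $u^0$ solves \eqref{odenonpertu}, its discrepancy $\xi_\eps(u^0)=\frac{\eps}{2}|\dot u^0|^2 - \frac1\eps V(u^0)$ is constant in $x$ on its interval of existence (this is the $f\equiv 0$ case of the identity $\frac{d}{dx}\xi_\eps(u)=\eps f \dot u$ from Lemma \ref{lem:discrepok}). On the other hand, from \eqref{loco}/\eqref{massa} the solution $u^0$ stays within an exponentially small neighbourhood of $u$ on all of $[x_0-b,x_0+b]$; letting $a\uparrow b$ and, more to the point, using that we may take $b$ itself arbitrarily large relative to $\eps$ in the same way one extends the comparison — more carefully, one invokes Lemma \ref{lem:discrepok} applied to $u$ together with the smallness of $\|f\|_2$: actually the cleanest route is that a global, finite-energy solution of \eqref{odenonpertu} has $\xi_\eps \to 0$ at $\pm\infty$ hence $\xi_\eps\equiv 0$ by constancy; here $u^0$ is a priori only local, so one instead argues that $u^0$ extends to a global bounded solution because it stays near the globally bounded $u$, and then its constant discrepancy, being the limit of $\xi_\eps(u)$ values which are $O(\eps M_0\|f\|_2)$ by Lemma \ref{lem:discrepok}, forces $\xi_\eps(u^0)\equiv 0$ after the appropriate normalisation. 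I expect this last point — correctly globalising $u^0$ and pinning its constant discrepancy to exactly zero rather than merely exponentially small — to be the only genuinely delicate step; the honest statement is presumably that $\xi_\eps(u^0)$ is the constant value $\lim_{x\to+\infty}\xi_\eps(u^0)$, and one identifies this via the $u$-comparison with a quantity that Lemma \ref{lem:discrepok} bounds by $C\eps\|f\|_2$, which one then absorbs into the already-exponential error by enlarging $C_0$, or alternatively one chooses the comparison solution with discrepancy reset to $0$ at $x_0$ and re-runs Gronwall. Everything else is routine substitution of the definitions of $b$ and $C_0$.
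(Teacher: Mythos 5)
Your Steps 1 and 2 are fine as far as they go, but the construction you actually carry out --- taking $U^0(x_0)=U(x_0)$ --- cannot produce the conclusion $\xi_\eps(u^0)\equiv 0$. Since $u^0$ solves \eqref{odenonpertu}, its discrepancy is the constant $\xi_\eps(u)(x_0)$, which by Lemma \ref{lem:discrepok} is only $O(\eps M_0\|f\|_2)$, not zero; no enlargement of $C_0$ can ``absorb'' a nonzero constant into an exact identity, and the statement (which feeds into item 5 of Theorem \ref{theglue}) requires exactly zero. Your fallback sketches in Step 3 do not close this: the Gronwall comparison controls $u^0$ only on $[x_0-b,x_0+b]$, so there is no basis for extending $u^0$ to a global bounded finite-energy solution, and even if such an extension existed its constant discrepancy would still be $\xi_\eps(u)(x_0)\neq 0$ in general. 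Tellingly, your argument never uses the hypothesis $x_0\in\mathcal D(u)$, which is exactly what the correct proof needs.

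The paper's route is the one you mention only in your last clause and do not execute: keep $u^0(x_0)=u(x_0)$ but \emph{reset the derivative} at $x_0$, choosing $u^0_x(x_0)$ as the positive multiple of $u_x(x_0)$ with $\frac{\eps}{2}|u^0_x(x_0)|^2=\frac{V(u(x_0))}{\eps}$, so that $\xi_\eps(u^0)(x_0)=0$ and hence $\xi_\eps(u^0)\equiv 0$ by conservation. The whole point is then to show this reset perturbs the initial data only slightly, and this is where $x_0\in\mathcal D(u)$ enters: at such a point $V(u(x_0))\geq c_0$, so (after requiring $C_0\geq 2\sqrt2 M_0/c_0$, which makes the $\xi_\eps$ correction subordinate) one gets $\eps|u_x(x_0)|\geq\sqrt{c_0}$, and from $\eps|u_x(x_0)|^2-\eps|u^0_x(x_0)|^2=2\xi_\eps(u)(x_0)$ together with Lemma \ref{lem:discrepok} one obtains $|U(x_0)-U^0(x_0)|\leq 2\sqrt2\,\tfrac{M_0}{\sqrt{c_0}}\,\eps^2\|f\|_2$. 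Only then does one verify \eqref{thesuppose} with the $b$ of \eqref{eq:serend} (taking $C_0\geq 2\sqrt2\tfrac{M_0}{\sqrt{c_0}}+\tfrac12$) and apply Lemma \ref{gron} as in your Step 2. This lower bound on $|u_x(x_0)|$ and the resulting smallness of the derivative adjustment are the genuinely missing steps in your proposal.
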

\begin{proof}
Since $x_0\in \mathcal D(u),$ there exist a constant $c_0>0$ (depending only on the choice of $\upmu_0$ and the eigenvalues $\lambda_i^-$) such that
$$
\eps |u_x(x_0)|^2/2 = \frac{V(u(x_0)}{\eps} + \xi_\eps(u)(x_0) \geq \frac{c_0}{\eps} + \xi_\eps(u)(x_0) 
$$
so that by Lemma \ref{lem:discrepok},
$$
\eps|u_x(x_0)|^2/2 \geq \frac{c_0}{\eps} -\sqrt{2}\eps M_0 \|f\|_2.
$$
Since $a>0$, we infer that $C_0\eps^\frac32 \|f\|_2< 1$ and therefore
$$
\eps|u_x(x_0)|^2/2 \geq \frac{c_0}{\eps} -\sqrt{2}\eps M_0 C_0^{-1}\eps^{-\frac32}.
$$
We first require $C_0 \geq 2\sqrt{2}M_0/c_0$ so that we obtain, since $0<\eps\leq 1,$ 
\begin{equation}\label{eq:derivgrande}
	\eps|u_x(x_0)| \geq \sqrt{c_0}. 
\end{equation}
We let $u^0(x_0)=u(x_0)$ and we wish to define $u^0_x(x_0)$ in such a way that $\xi_\eps(u^0)(x_0)=0.$ This may be achieved in general in a non unique way. We choose $u^0_x(x_0)$ as the unique positive multiple of $u_x(x_0).$ It follows from the equality
$$
\eps|u_x(x_0)|^2 - \eps|u_x^0(x_0)|^2 = 2\xi_\eps(u)(x_0),
$$
from the bound \eqref{eq:derivgrande}, and from Lemma \ref{lem:discrepok}, that
$$
\left|\eps \left( u_x-u^0_x\right)(x_0)\right| \leq 2\sqrt{2}\frac{M_0}{\sqrt{c_0}}\eps^2\|f\|_2, 
$$
or, since $u(x_0)=u^0(x_0),$ that
\begin{equation}\label{eq:depprox}
\left|U(x_0)-U^0(x_0)\right| \leq  2\sqrt{2}\frac{M_0}{\sqrt{c_0}}\eps^2\|f\|_2.
\end{equation}
In order to apply Lemma \ref{gron}, we estimate
$$
|U(0)-U^0(0)| + \frac{\eps^\frac32}{\sqrt{2A(\|u\|_\infty +1)}} \|f\|_{2} \leq \left( 2\sqrt{2}\frac{M_0}{\sqrt{c_0}} + \frac{1}{2}\right) \eps^\frac32 \|f\|_2.
$$
We next require that $C_0 \geq 2\sqrt{2}\frac{M_0}{\sqrt{c_0}} + \frac{1}{2}$ so that by definition of $b$,
$$
|U(0)-U^0(0)| + \frac{\eps^\frac32}{\sqrt{2A(\|u\|_\infty +1)}} \|f\|_{2} \leq 
\exp\left( -\frac{A(\|u\|_\infty +1)b}{\eps}\right).
$$
The conclusion then follows from Lemma \ref{gron}.
\end{proof}

\subsection{Study of the perturbed equation \eqref{odepertu} : boundary value problem}

In this subsection, which may be viewed as the elliptic counterpart of Section \ref{off}, we prove estimates regarding a solution $u$ of \eqref{odepertu} on some interval, provided that this interval has an empty intersection with the front set of $u.$ 

More precisely, we have

\begin{lemma}\label{lem:elliptic}
	Let $u$ be a solution to \eqref{odepertu} such that $\mathcal E_\eps(u)\leq M_0<+\infty,$ and let $x_0\in \R,$ $r>0$ be such that 
	$$
[x_0-r,x_0+r] \cap \mathcal D(u) = \emptyset. 
	$$
	There exist a constant $C_1>0$ depending only on $M_0$ and $V$ such that
	\begin{equation}\label{eq:reguelli}
		\eps e_\eps(u) \leq C_1\left( \eps^\frac32\|f\|_2 + \frac{\eps}{r}\exp(-\frac{r}{C_1\eps})\right)
	\end{equation}
	on $[x_0-r/2,x_0+r/2].$ 
\end{lemma}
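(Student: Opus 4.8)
The plan is to mimic the parabolic argument of Section \ref{off} in its elliptic (stationary-in-time) incarnation. Since $[x_0-r,x_0+r]\cap\mathcal D(u)=\emptyset$, there is a single well $\upsigma_i\in\Sigma$ with $u(y)\in B(\upsigma_i,\upmu_0)$ for all $y\in[x_0-r,x_0+r]$; connectedness of the interval and continuity of $u$ prevent switching wells, so $i$ is well defined. On this interval \eqref{eq:conv} gives the coercivity $\tfrac12\lambda_i^-\mathrm{Id}\le\nabla^2V(u(y))\le 2\lambda_i^+\mathrm{Id}$, which plays the role that \eqref{eq:conv} played in Proposition \ref{estimpar}.

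First I would differentiate \eqref{odepertu}: setting $w=\partial_x u$, one gets $w_{xx}=\eps^{-2}\nabla^2V(u)w+\partial_x f$. Introducing a cut-off $\varphi$ with $\varphi\equiv1$ on $[x_0-2r/3,x_0+2r/3]$, $\varphi\equiv0$ outside $[x_0-3r/4,x_0+3r/4]$, $|\varphi'|\lesssim r^{-1}$, $|\varphi''|\lesssim r^{-2}$, and putting $g=\varphi w$, one obtains $g_{xx}-\eps^{-2}\nabla^2V(u)g=\varphi\partial_xf-(\varphi''w+2\varphi'\partial_xw)$. Multiplying by $g$, absorbing the $\varphi'\partial_x w$ term exactly as in Proposition \ref{estimpar} (using $\varphi\partial_xw=-\varphi'w+\partial_xg$), and using coercivity, one is led to a differential inequality of the form $-g_{xx}''\le$ — more precisely, writing $h=|g|^2$, one gets an inequality $\tfrac12 h''\ge \eps^{-2}\lambda_i^- h - C(r^{-2}|w|^2\mathbf 1_A + \eps^2|\varphi|^2|f|^2)$ on $\R$, where $A$ is the (small) support of $\varphi'$, at distance $\ge r/6$ from $B:=[x_0-r/2,x_0+r/2]$. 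The key point is that both forcing terms are controlled: $\int_A|w|^2\le \eps^{-1}\cdot 2M_0$ by the energy bound $(H_0)$, and $\int|f|^2\le\|f\|_2^2$.

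Then I would solve (or rather estimate via the maximum/comparison principle and the Green's function of $-\eps^{-2}\lambda_i^-+\partial_{xx}$) the linear problem: on the whole line the operator $\partial_{xx}-\eps^{-2}\lambda_i^-$ has fundamental solution decaying like $\exp(-\sqrt{\lambda_i^-}\,|x|/\eps)$ up to a $\eps$-dependent prefactor, so convolving against a source supported in $A$ produces, on $B$, a contribution no larger than $\sim \eps^{-1}M_0 r^{-1}\exp(-c\,r/\eps)$, while the contribution of the $\eps^2 f$ source is $\lesssim \eps^2\|f\|_2\cdot(\eps/\sqrt{\lambda_i^-})\lesssim \eps^{5/2}\|f\|_2$ (absorbing powers into the constant). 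Running the parallel computation with $g=\varphi(u-\upsigma_i)$ in place of $\varphi\partial_xu$ controls $\int_B V(u)/\eps$ by the same two quantities times $4\lambda_i^+/\lambda_i^-$. Adding the two bounds and using that $e_\eps(u)=\tfrac\eps2|\partial_xu|^2+\eps^{-1}V(u)$, then multiplying through by $\eps$ and passing from the $L^1(B)$ bound to a pointwise bound by a further elliptic regularity step (interior estimate for \eqref{odepertu} on subintervals of $B$, using again that $u\in B(\upsigma_i,\upmu_0)$), yields \eqref{eq:reguelli} with a constant $C_1$ depending only on $\upmu_0$, $\lambda_i^\pm$, $M_0$, i.e. only on $V$ and $M_0$.

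The main obstacle I anticipate is the bookkeeping in passing from the integrated ($L^1$ over $B$) estimate to the \emph{pointwise} bound claimed in \eqref{eq:reguelli}: one needs a clean interior elliptic estimate for \eqref{odepertu} showing that $\eps e_\eps(u)(y)$ is controlled by the local average $\eps^{-1}\int$ of $\eps e_\eps(u)$ plus a contribution from $\|f\|_2$, uniformly in $\eps\le1$ and $r$, and this requires either a Schauder-type argument for the rescaled equation $U_x=\eps^{-1}G(U)+\eps F$ or a bootstrap using that $|u-\upsigma_i|\le\upmu_0$ keeps $\nabla V$ Lipschitz with a controlled constant. The absorption of the $\varphi'\partial_xw$ cross-term and the choice of the two auxiliary comparison inequalities ($x\le \tfrac{c}{\sqrt e}e^{x^2/(\cdot)}$ to trade the Gaussian tail, then a Young-type split of the exponent) is routine once the parabolic proof of Proposition \ref{estimpar} is in hand, so I would simply invoke that computation with the time-derivative terms set to zero.
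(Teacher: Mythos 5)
Your treatment of the potential part is essentially the paper's argument: the comparison principle applied to $\varphi(u-\upsigma_i)$ (the paper uses $w=(u-\upsigma_i)^2\chi^2$), with sources of size $C|f|$ and $Cr^{-2}1_{{\rm supp}(\varphi')}$, convolved against the Green kernel $K$ of $-\partial_{xx}+\lambda_i^-/(2\eps^2)$. Note that this already gives a \emph{pointwise} bound on $|u-\upsigma_i|^2$ on $[x_0-r/2,x_0+r/2]$, since $\|K*f\|_\infty\le\|K\|_2\|f\|_2\le C\eps^{3/2}\|f\|_2$ and the $1_{{\rm supp}(\varphi')}$ contribution is exponentially small at distance $\ge r/4$; so the ``$L^1$-to-pointwise'' regularity step you anticipate as the main obstacle is not needed at all. (The parabolic estimate of Proposition \ref{estimpar} was integrated only because of the initial-data term $G_{s-t}*|f_\eps|^2(\cdot,t)$, which has no elliptic counterpart.)

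The genuine gap is in the gradient part. Differentiating \eqref{odepertu} produces the source $\partial_x f$, and $f$ is only assumed to lie in $L^2$: the pointwise differential inequality you claim for $h=|\varphi\,\partial_x u|^2$, with forcing $C\bigl(r^{-2}|w|^2 1_A+\eps^2\varphi^2|f|^2\bigr)$, cannot be derived, because the cross term $2\varphi^2 w\cdot\partial_x f$ admits no bound in terms of $\|f\|_2$ (pointwise or in $L^2$); trying to repair this by integrating by parts inside the Green-kernel representation reintroduces $u_{xx}=\eps^{-2}\nabla V(u)+f$ and does not obviously close. The paper never differentiates the equation: it only runs the maximum-principle argument for $|u-\upsigma_i|^2\chi^2$, obtaining the pointwise bound on $V(u)$, and then recovers the gradient term from the pointwise identity $\eps^2|u_x|^2/2=\eps\,\xi_\eps(u)+V(u)$ combined with Lemma \ref{lem:discrepok}, which gives $\|\xi_\eps(u)\|_\infty\le\sqrt2\,\eps M_0\|f\|_2$ (precisely because $\tfrac{d}{dx}\xi_\eps(u)=\eps f\cdot u_x$ and $\xi_\eps(u)\to0$ at infinity, so only $\|f\|_2$ and the energy bound enter). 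Replace your differentiated-equation step for $\partial_x u$ by this discrepancy argument; with that substitution the rest of your plan goes through and yields \eqref{eq:reguelli} directly.
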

\begin{proof}
	The proof is very similar to the one of Proposition \ref{estimpar}, so that we skip part of the details. In the sequel, $C$ denotes a constant which depends only on $V$ and whose actual value may vary from line to line. If $i\in \{1,\cdots,q\}$ is such that $u(x) \in B(\upsigma_i,\upmu_0)$ on $[x_0-r,x_0+r],$ then the function $w$ defined by
	$$
w(x) = (u-\upsigma_i)^2\chi^2,
	$$
	where $0\leq \chi\leq 1$ is a smooth cut-off function equal to one on $[x_0-\frac34 r,x_0+\frac34 r]$ and with compact support in $(x_0-r,x_0+r)$, satisfy on $\R$ the differential inequality
	$$
	- w_{xx} + \frac{\lambda_i^-}{2\eps^2}w \leq C f + C r^{-2}1_{{\rm supp}(\chi')}. 
	$$
By the comparison principle, we obtain 
	$$
	w \leq C(K *f+ r^{-2}K*1_{{\rm supp}(\chi')}),
	$$
	where 
	$$
	K(x) = \frac{\eps}{\sqrt{2\lambda_i^-}}\exp\left(-\frac{\sqrt{\lambda_i^-/2}}{\eps}|x|\right).
	$$
We then estimate
$$
\|K*f\|_\infty \leq \|K\|_2\|f\|_2 \leq C\eps^\frac32 \|f\|_2,
$$
and since ${\rm dist}([x_0-r/2,x_0+r/2],{\rm supp}(\chi'))\geq r/4,$ 
\begin{equation*}\begin{split}
\|r^{-2}K*1_{{\rm supp}(\chi')}\|_{L^\infty([x_0-\frac{r}{2},x_0+\frac{r}{2}])} &\leq r^{-2}\|K\|_{L^\infty(\R\setminus [-\frac{r}{4},\frac{r}{4}])} {\rm meas}({\rm supp}(\chi'))\\
&\leq C \frac{\eps}{r}\exp\left(-\sqrt{\tfrac{\lambda_i^-}{32}}\frac{r}{\eps}\right).
\end{split}\end{equation*}
Since $\chi\equiv 1$ on $[x_0-r/2,x_0+r/2]$, we therefore obtain, for $x\in [x_0-r/2,x_0+r/2]$, 
\begin{equation}\label{eq:etdeun}
V(u(x)) \leq C\left( \eps^\frac32\|f\|_2 + \frac{\eps}{r}\exp(-\frac{r}{C\eps})\right).
\end{equation}
Finally, by definition of $\xi_\eps$ we have
$$
\eps^2 \frac{|u_x|^2}{2} = \eps \xi_\eps(u) + V(u)   
$$
so that for $x\in [x_0-r/2,x_0+r/2]$, by \eqref{eq:etdeun} and Lemma \ref{lem:discrepok},
\begin{equation}\label{eq:etdedeux}
	\eps^2|u_x|^2(x) \leq \sqrt{2}M_0\eps^2\|f\|_2 + C\left( \eps^\frac32\|f\|_2 + \frac{\eps}{r}\exp(-\frac{r}{C_1\eps})\right).
\end{equation}
Combining \eqref{eq:etdeun}, \eqref{eq:etdedeux} and the fact that $\eps\leq 1$, the conclusion follows for the choice $C_1= C+ \sqrt{2}M_0.$  
\end{proof}

\subsection{Proof of Theorem \ref{theglue}}

Since the total dissipation of energy is bounded from above by $M_0$, by averaging, there exist 
$$
0 \leq T \leq (\frac{R}{K_0})^2 \exp\left(\frac{R}{K_0\eps}\right)
$$
such that
$$
\eps \int_\R |\partial_t v_\eps(x,T)|^2 \, dx \leq K_0^2M_0R^{-2}\exp\left(-\frac{R}{K_0\eps}\right).
$$
Hence, $v_\eps(\cdot,T)$ satisfies equation \eqref{odepertu} where
$$
\|f\|_2 \leq K_0\sqrt{M_0}R^{-1}\eps^{-\frac12}  \exp\left(-\frac{R}{2K_0\eps}\right).
$$
In order to apply Corollary \ref{cor:9.1}, and in view of \eqref{eq:serend}, we first estimate
\begin{equation}\label{eq:montre}\begin{split}
b &\equiv  \frac{\eps}{A(\|v_\eps(\cdot,T)\|_\infty+1)} \log\left(\frac{1}{C_0\eps^\frac32 \|f\|_2}\right) \\
&\geq  \frac{\eps}{A(\|v_\eps(\cdot,T)\|_\infty+1)} \log\left(\frac{1}{C_0K_0\sqrt{M_0}} \frac{R}{\eps} \exp\left(\frac{R}{2K_0\eps}\right)\right) \\
& \geq  2\frac{R}{K_2},
\end{split}\end{equation}
provided $K_2$ is large enough depending only on $M_0$ and $V.$ Next, we apply Lemma \ref{lem:conf} with $\kappa=\frac{1}{4}$ and $\delta = \frac{R}{K_2}2^{-4\frac{M_0}{\eta_0}}.$ This yields  
$$
\frac{R}{K_2}2^{-4\frac{M_0}{\eta_0}} \leq r \leq \frac{R}{K_2}
$$
and a collection of points $\{a_j\}_{j\in J}$ such that $\sharp J \leq M_0/\eta_0,$  
$$
\mathcal D(T) \subset \cup_{j\in J} (a_j-r/4,a_j+r/4)
$$
and
$$
{\rm dist}(a_i,a_j) \geq 4r \qquad \forall\ i\neq j\in J.
$$
In particular, $2r\leq b$ and for each $j\in J$ we may apply Corollary \ref{cor:9.1} with $x_0=a_j$ and $a=r.$ This yields, after rescaling, the collection $\{U_j\}_{j\in J}$ and the estimates
\begin{equation*}\begin{split}
\|v_\eps(\cdot,T)-U_j(\frac{\cdot-a_j}{\eps})\| + \eps\|\partial_x\left(v_\eps(\cdot,T)-U_j(\frac{\cdot-a_j}{\eps})\right) \| &\leq \exp\left( - A(\|v_\eps\|_\infty+1) \frac{r}{\eps}\right)\\
&\leq K_2\exp\left(- \frac{r}{K_2\eps}\right)
\end{split}\end{equation*}
in the space $L^\infty([a_j-r,a_j+r]),$ provided $K_2$ is large enough, depending only on $M_0$ and $V.$  

Consider now $x_0\in \R$ such that ${\rm dist}(x,\cup_{j\in J}\{a_j\})\geq r.$ Since $[x_0-\frac34r,x_0+\frac34r]\cap \mathcal D(T) =\emptyset$ by construction, we obtain by Lemma \ref{lem:elliptic}
$$
\eps e_\eps(v_\eps(x_0, T)) \leq C_1\left( K_0\sqrt{M_0}\frac{\eps}{R} \exp\left(-\frac{R}{2K_0\eps}\right) + \frac{4\eps}{3r}\exp\left(-\frac{3r}{4C_1\eps}\right)\right),
$$
so that
$$
\|v_\eps(x_0,T)-\upsigma_j\| + \eps\|\partial_x(v_\eps(x_0,T)\| \leq K_2 \exp\left( - \frac{r}{K_2\eps}\right),
$$
provided $K_2$ is large enough, depending only on $M_0$ and $V.$  The conclusion follows.\qed


\begin{thebibliography}{99}


 \bibitem{alikakos2} N.D Alikakos, P. Bates,  and G. Fusco, {\it Slow motion for the Cahn-Hilliard equation in one space dimension},  J. Differential Equations {\bf 90} (1991), 81--135.
 
 
\bibitem{alikakos1} N.D Alikakos, L.  Bronsard,  and G. Fusco,  {\it Slow motion in the gradient theory of phase transitions via energy and spectrum}, Calc. Var. Partial Differential Equations {\bf  6}  (1998), 39--66.

\bibitem{BOS4} F. Bethuel, G. Orlandi and D. Smets, {\it
Collisions and phase-vortex interaction in dissipative Ginzburg-Landau dynamics}, 
Duke Math. J. {\bf 130} (2005), 523-614.


\bibitem{BOS5} F. Bethuel, G. Orlandi and D. Smets, {\it Quantization and motion
law for Ginzburg-Landau vortices}, Arch. Rational Mech  {\bf 183 }(2007), 315--370



\bibitem{BrKo} L. Bronsard and R.V. Kohn, {\it On the slowness of phase boundary motion in one space dimension}, Comm. Pure Appl. Math. {\bf 43} (1990), 983--997.


 \bibitem{carpego} J.Carr, and R.L. Pego, {\it Metastable patterns in solutions of }$u\sb t=\epsilon\sp 2u\sb {xx}-f(u)$. Comm. Pure Appl. Math. {\bf 42 }(1989), 523--576
 
  \bibitem{carpego2} J.Carr, and R.L. Pego, {\it Invariant manifolds for metastable patterns in} $u\sb t=\epsilon\sp 2u\sb {xx}-f(u)$. Proc. Roy. Soc. Edinburgh Sect. A  {\bf 116}  (1990), 133--160

\bibitem{chengene} X. Chen, {\it Generation, propagation, and annihilation of metastable patterns}. J. Differential Equations {\bf 206} (2004), no. 2, 399--437. 

\bibitem{chenxy} X-Y  Chen, {\it Dynamics of interfaces in reaction diffusion systems}. Hiroshima Math. J.{\bf 21} (1991), 47--83. 

\bibitem{ei} S.-I. Ei, {\it The motion of weakly interacting pulses in reaction-diffusion systems,} J. Dynam. Differential Equations {\bf 14}  (2002), 85--137.

\bibitem{fifemac}P. Fife  and J.B.   McLeod. 
{\it The approach of solutions of nonlinear diffusion equations to travelling front solutions}. 
Arch. Ration. Mech. Anal. {\bf 65 }(1977), no. 4, 335--361.

\bibitem{fuschale} G.Fusco, J.K Hale, {\it. Slow-motion manifolds, dormant instability, and singular perturbations}. J. Dynam. Differential Equations {\bf 1} (1989), 75--94. 

\bibitem{gallayrisler} T.Gallay and E. Risler, {\it  A variational proof of global stability for bistable travelling waves},  Differential Integral Equations {\bf 20} (2007) 901--926.

\bibitem{grant} C.P. Grant, {\it Slow motion in one-dimensional Cahn-Morral systems}, SIAM J. Math. Anal. {\bf 26}  (1995), 21--34.
    
    \bibitem{JeSo} R.L. Jerrard and H.M. Soner, {\it Dynamics of
Ginzburg-Landau vortices,}  Arch. Rational Mech. Anal.  {\bf 142}
(1998), 99-125.
\bibitem{kerustern}  J. Keller, J. Rubinstein and P. Sternberg, {\it  Front interaction and nonhomogeneous equilibria for tristable reaction-diffusion equations} SIAM J. Appl. Math. {\bf 53}  (1993), 1669--1685. 

\bibitem{Li}  F.H. Lin, {\it Some dynamical properties of
Ginzburg-Landau vortices}, Comm. Pure Appl. Math. {\bf 49} (1996),
323--359.
    
\bibitem{mischaikow} H. Hattori and K. Mischaikow, {\it On the slow motions of phase boundaries in the Korteweg theory of capillarity},  Dynam. Systems Appl. {\bf 1} (1992), 441--452. 
  
\bibitem{ottorez}
	F. Otto and M.G. Reznikoff, {\it Slow motion of gradient flows,} J. Differential Equations {\bf 237}  (2007), 372--420.
    
\bibitem{pinto}J.T.  Pinto, {\it  Slow motion manifolds far from the attractor in multistable reaction-diffusion equations},  J. Differential Equations {\bf 174} (2001) 101--132.
    
 \bibitem {risler} E. Risler {\it Global convergence toward traveling fronts in nonlinear parabolic systems with a gradient structure}. Ann. Inst. H. Poincar\'e Anal. Non Lin\'eaire 25 (2008) 381--424. 

\bibitem{risler1} E. Risler {\it Global existence towards travelling fronts for spatially extended gradient-like systems}, preprint INLN 2002.

\bibitem{risler2} E. Risler {\it A global relaxation result for bistable solutions of spatially extended gradient-like systems in one unbounded spatial dimension}, preprint INLN 2002.

\bibitem{vandervorst}W.D. Kalies, R. C. VanderVorst, and T.Wanner, {\it  Slow motion in higher-order systems and $\Gamma$-convergence in one space dimension},  Nonlinear Anal.{\bf  44} (2001) 33--57.

\end{thebibliography}
\end{document}